\def \R {I\!\!R}
\def\B{{\cal B}}
\def\N{\hbox{\bf N}}
\def\p{\partial}
\def\k{\kappa}
\newenvironment{proof}
{\medskip\noindent{\bf Proof.\/}} {\null \hfill $\Box$
\par\medskip}
\def \deml{{\bf Proof of Lemma}}
\def \diam{\hbox{\em diam }}
\newcommand{\ba}{\begin{eqnarray}}
\newcommand{\ea}{\end{eqnarray}}
\newcommand{\basn}{\begin{eqnarray*}}
\newcommand{\easn}{\end{eqnarray*}}
\newtheorem{thoa}{\bf Theorem A}
\newtheorem{theorem}{\bf Theorem}[section]
\newtheorem{lemma}{\bf Lemma}[section]
\newtheorem{proposition}[theorem]{\bf Proposition}
\newtheorem{remark}{\bf Remark}[section]
\numberwithin{equation}{section}
\title{Problem with critical Sobolev exponent and with weight}
\author{R.Hadiji\quad and \quad
H.Yazidi \\ UFR des Sciences et Technologie, CNRS UMR 8050,\\
Université Paris 12 - Val-de-Marne,\\
61, avenue du Général de Gaulle,\\
94010 Créteil Cedex.\\(hadiji@univ-paris12.fr,
yazidi@univ-paris12.fr) }
\date{}
\begin{document}
\maketitle
\begin{abstract}
We consider the problem: $-\textsl{div}(p\nabla u)={u}^{q
-{1}}+\lambda{u}$, $u > 0$ in $\Omega$, $u=0$ on $\partial \Omega$.
Where $\Omega$ is a bounded domain in $\R^{n}$, ${n}\geq{3}$, $ p:
\bar{\Omega}\longrightarrow \R$ is a given positive weight such that
$p\in H^{1}(\Omega)\cap C(\bar{\Omega})$, $\lambda$ is a real
constant and $q=\frac{2n}{n-2}$. We study the effect of the behavior
of $p$ near its minima and the impact of the geometry of domain on
the existence of solutions for the above problem.\\
{\it{Key Words:}} Critical Sobolev exponent, variational methods. \\
{\it{2000 Mathematics Subject Classification:}} 35J20, 35J25, 35J60.
\end{abstract}
\section{Introduction}
In this paper we study the following problem:\ba \left\{
\begin{array}{lll} -\textsl{div}(p(x)\nabla u)={u}^{q
-{1}}+\lambda{u} &\textrm{in $\Omega$,}\\
\hspace{25.5mm}u > 0 &\textrm{in $\Omega$,}\\
\hspace{25mm}u=0  &\textrm{on $\partial \Omega$,}
\end{array}
\right. \label{eqhy1} \ea where $\Omega$ is a bounded domain in
$\R^{n}$, ${n}\geq{3}$, $ p: \bar{\Omega}\longrightarrow \R$ is a
given positive weight such that $p\in H^{1}(\Omega)\cap
C(\bar{\Omega})$, $\lambda$ is a real constant and
$q=\frac{2n}{n-2}$ is the critical exponent for the Sobolev
embedding of $H_{0}^{1}(\Omega)$ into $L^{q}(\Omega)$.\\In
\cite{bn}, Brezis and Nirenberg treated the case where $p$ is
constant. They proved, in particular, the existence of a solution of
(\ref{eqhy1}) for $0<\lambda<\lambda_{1}$ if $n\ge4$ and for
$\lambda^{*}<\lambda<\lambda_{1}$ if $n=3$, where $\lambda_{1}$ is
the first eigenvalue of $-\Delta$ on $\Omega$ with zero Dirichlet
boundary condition and $\lambda^{*}$ is a positive constant.\\In
this paper, we extend this result to the general case of where $p$ is not constant. The study of problem (\ref{eqhy1}), shows that the existence of solutions depends, apart from parameter $\lambda$, on the behavior of p near its minima and on the geometry of the domain $\Omega$.\\
Set $p_{_{0}}=\min\{p(x),\,x\in\bar{\Omega}\}$, we suppose that
$p^{-1}(\{p_{_{0}}\})\cap\Omega\not=\emptyset$ and let $a\in
p^{-1}(\{p_{_{0}}\})\cap\Omega$.\\In the first part of this work, we
study the effect of the behavior of $p$ near its minima on the
existence of solution for our problem. The method that is mostly
relied upon, apart from the identities of Pohozeav, is the
adaptations to the new context of the arguments developed in
\cite{bn}.\\We assume that, in a neighborhood of $a$, $p$ behaves
like
\begin{equation}
p(x)=p_{_{0}}+\beta_{k}\vert
x-a\vert^{k}+|x-a|^{k}\theta(x),\label{eqhy0}\end{equation}
with $k>0$, $\beta_{k}>0$ and $\theta(x)$ tends to $0$ when $x$ tends to $a$.\\
Note that the parameter $k$ will play an essential role in the study
of our problem. Indeed, 2 appears as a critical value for $k$. More
precisely the case $k> 2$ is treated by a classical procedure,
however the case $0<k\leq 2$ is less easily accessible. Therefore,
in this case, we restrict ourself to the case where $p$ satisfies
the additional condition
\begin{equation}
 k\beta_{k}\leq\frac{\nabla p(x).(x-a)}{|x-a|^{k}}\quad \textrm{a.e
 \,$x\in\Omega$.}
 \label{eqhy00}
\end{equation}
Let us notice that if $p$ is sufficiently smooth, then condition
(\ref{eqhy0}) follows directly from Taylor's expansion of $p$ near
$a$.\\The fact that $2$ is a critical value for $k$ appears clearly
in dimension $n=4$, therefore, in this dimension and with the aim of
obtaining more explicit results, we assume moreover that $\theta$
satisfies $\int_{B(a,1)}\frac{\theta(x)}{|x-a|^{4}}dx<\infty$. Let
us emphasize that this last condition is not necessary to prove the
existence of solutions.\\Moreover, in dimension $n=3$, the problem
is more delicate, then we treat it in a particular case; more
precisely for $p(x)=p_{_{0}}+\beta_{k}|x-a|^{k}$, $k>0$.\\The first
result of this paper is the following
\begin{theorem}~\\
Assume that $p\in H^{1}(\Omega)\cap C(\bar{\Omega})$ satisfies
(\ref{eqhy0}). Let $\lambda_{1}^{div}$ be the first eigenvalue of
$-\textsl{div}(p(x)\nabla .)$ on $\Omega$ with zero Dirichlet
boundary condition, we have\\1)If $n\geq 4$ and $k>2$, then for
every $\lambda \in
]0,\lambda_{1}^{\textsl{div}}[$ there exists a solution of~(\ref{eqhy1}).\\
2)If $n\geq 4$ and $k=2$, then there exists a constant
$\tilde{\gamma}(n)=\frac{(n-2)n(n+2)}{4 (n-1)}\beta_{2}$ such that
for every $\lambda\in ]\tilde{\gamma}(n), \lambda_{1}^{div}[$ there
exists a
solution of~(\ref{eqhy1}).\\
3)If $n=3$ and $k\ge 2$, then there exists a constant $\gamma(k)>0$ such that for every $\lambda\in]\gamma(k),\lambda_{1}^{\textsl{div}}[$ there exists a solution of~(\ref{eqhy1}).\\
4)If $n\geq 3$, $0<k<2$ and $p$ satisfies the condition
(\ref{eqhy00}) then there exists $\lambda^{*}\in
[\tilde{\beta_{k}}\frac{n^{2}}{4}, \lambda_{1}^{\textsl{div}}[$,
where $\tilde{\beta_{k}}=\beta_{k}\min[(\diam \Omega)^{k-2},1]$,
such that for any $\lambda \in
]\lambda^{*},\lambda_{1}^{\textsl{div}}[$ problem (\ref{eqhy1})
admits a solution.\\
5)If $n \ge 3$ and $k>0$, then for every $\lambda\leq 0$ there is no
minimizing solution of equation (\ref{eqhy1}).\\
6)If $n\ge3$ and $k>0$, then there is no solution of problem
(\ref{eqhy1}) for every $\lambda\ge\lambda_{1}^{div}.$\label{thhy2}
\end{theorem}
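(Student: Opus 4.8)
The plan is to treat (\ref{eqhy1}) variationally, adapting the Brezis--Nirenberg machinery of \cite{bn} to the weighted operator. I would associate to the problem the constrained minimization
$$
S_\lambda = \inf\Big\{\, \into p|\nabla u|^2 - \lambda\into u^2 \;:\; u\in\ahh,\ \into |u|^q = 1 \,\Big\},
$$
and let $S$ be the best constant in the Sobolev embedding $D^{1,2}(\R^n)\hookrightarrow L^q(\R^n)$. The crux is a compactness lemma: if $S_\lambda < p_0\,S$, then $S_\lambda$ is attained; a minimizer, rendered positive by replacing $u$ with $|u|$ and applying the strong maximum principle, solves the Euler--Lagrange equation, and a suitable rescaling $u\mapsto tu$ converts it into a solution of (\ref{eqhy1}). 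The value $p_0 S$ is exactly the energy at which a minimizing sequence can lose compactness by concentrating at the interior minimum point $a$ of $p$. Once this lemma is in place (via concentration-compactness, or by directly ruling out concentration below the threshold), every existence statement, parts 1--4, reduces to verifying the strict inequality $S_\lambda < p_0 S$.

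To establish $S_\lambda < p_0 S$ I would insert the truncated Aubin--Talenti instantons concentrated at $a$, $u_\varepsilon\approx c_n\varepsilon^{(n-2)/2}(\varepsilon^2+|x-a|^2)^{-(n-2)/2}$, into the quotient. Splitting the numerator as $p_0\into|\nabla u_\varepsilon|^2 + \into(p-p_0)|\nabla u_\varepsilon|^2 - \lambda\into u_\varepsilon^2$ and using (\ref{eqhy0}), the unperturbed quantities obey the classical estimates with errors $O(\varepsilon^{n-2})$ in the gradient norm and $O(\varepsilon^n)$ in the $L^q$ norm, while the two competing perturbations scale as
$$
\into (p-p_0)|\nabla u_\varepsilon|^2 \sim C\beta_k\,\varepsilon^{\min(k,\,n-2)},
\qquad
\lambda\into u_\varepsilon^2 \sim
\begin{cases}
c\lambda\,\varepsilon^2, & n\ge 5,\\
c\lambda\,\varepsilon^2|\log\varepsilon|, & n=4,\\
c\lambda\,\varepsilon, & n=3.
\end{cases}
$$
The dominant balance decides existence. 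For $k>2$ (part 1) the weight term is negligible against the linear one, so any $\lambda>0$ gives $S_\lambda<p_0 S$. For $k=2$ (part 2) both are of the same order and compete at leading order, and matching coefficients yields the explicit threshold $\tilde\gamma(n)=\frac{(n-2)n(n+2)}{4(n-1)}\beta_2$ above which $\lambda$ wins. For $0<k<2$ (part 4) the weight term dominates with the unfavorable positive sign, so one needs hypothesis (\ref{eqhy00}) — fed through a Pohozaev identity for $-\dive(p\nabla\cdot)$ to produce a usable lower bound — together with $\lambda$ large, giving $\lambda^*\ge\tilde\beta_k\,n^2/4$; the upper constraint $\lambda<\lambda_1^{div}$ is supplied by part 6. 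Dimension $n=3$ (part 3) is the most delicate, since the linear term is only of order $\varepsilon$ and thus ties the weight term, which is why one restricts to the pure power $p=p_0+\beta_k|x-a|^k$.

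The two nonexistence statements I would prove directly. For part 6, test (\ref{eqhy1}) against the first eigenfunction $\varphi_1>0$ of $-\dive(p\nabla\cdot)$: integration by parts gives $(\lambda_1^{div}-\lambda)\into u\varphi_1 = \into u^{q-1}\varphi_1 > 0$, which is impossible once $\lambda\ge\lambda_1^{div}$. For part 5, when $\lambda\le 0$ the term $-\lambda\into u^2\ge 0$ forces $S_\lambda\ge p_0 S$, while the test functions $u_\varepsilon$ above, whose weight and linear corrections vanish as $\varepsilon\to0$, give $S_\lambda\le p_0 S$; hence $S_\lambda = p_0 S$. An extremal would have to saturate the Sobolev inequality with $p\equiv p_0$ on its support while killing the $\lambda$-term, i.e. be an Aubin--Talenti bubble living in the bounded domain $\Omega$ with $p$ constant, a contradiction; so the infimum is not attained and no minimizing solution exists.

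The main obstacle is the block of test-function estimates in the borderline and low regimes, namely $n=3,4$ and $0<k\le2$, where the weight perturbation is of the same order as, or larger than, the linear term. There the naive expansion is not enough: one must control the truncation error at the scale of $\diam\Omega$, invoke the integrability assumption on $\theta$ in dimension $n=4$, and combine the Pohozaev identity with (\ref{eqhy00}) when $0<k<2$. Extracting the sharp threshold constants $\tilde\gamma(n)$ and $\lambda^*$ — rather than merely asserting their existence — is the technically demanding heart of the argument.
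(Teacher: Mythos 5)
Your global strategy is the one the paper follows: the same constrained infimum $S_{\lambda}(p)$, the same compactness lemma (Brezis--Lieb splitting shows the infimum is attained whenever $S_{\lambda}(p)<p_{_{0}}S$, the paper's Lemma~\ref{lmhy3}), concentrating bubbles at $a$ to get the strict inequality in parts 1)--3), the first-eigenfunction test for part 6), and non-attainment of $S$ on a bounded domain for part 5). Those parts are in order, up to one harmless imprecision: in the borderline case $k=n-2$ (in particular $n=4$, $k=2$) the weight correction is of size $\varepsilon^{n-2}\vert\log\varepsilon\vert$ in your normalization, not $\varepsilon^{\min(k,n-2)}$ as your displayed scaling asserts; your conclusion for part 2) survives because you then (correctly) treat the weight and linear terms as competing at the same order.

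The genuine gap is in part 4) ($n\ge3$, $0<k<2$). As you yourself observe, there the bubble expansion gives $S_{\lambda}(p)\le p_{_{0}}S+A_{k}\varepsilon^{k/2}+o(\varepsilon^{k/2})$ with $A_{k}>0$, and the $\lambda$-term enters only at strictly smaller order; hence no choice of $\lambda$, however large, makes test functions produce $S_{\lambda}(p)<p_{_{0}}S$. Your proposal then invokes ``Pohozaev $+$ $\lambda$ large,'' but the Pohozaev/Hardy input (Proposition~\ref{poho} together with Proposition~\ref{pr1}) only yields the \emph{non-existence} half, i.e.\ a lower bound on $\lambda^{*}$; it provides no mechanism at all for proving existence for some $\lambda<\lambda_{1}^{div}$, which is the actual content of the assertion. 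The paper fills this hole with a purely qualitative argument (Lemmas~\ref{lmk} and~\ref{moi}): first, combining the compactness lemma with the Pohozaev non-existence result and Hardy's inequality applied to an auxiliary weight $\tilde{p}$ squeezed between $p_{_{0}}+\beta_{k}|x-a|^{2}$ and $p$, it shows that $S_{\lambda}(p)$ is \emph{exactly} $p_{_{0}}S$ for all $\lambda\le\tilde{\beta}_{k}\frac{n^{2}}{4}$ (note that Pohozaev alone, with the constant of Proposition~\ref{pr1}, does not reach this threshold); second, $\lambda\mapsto S_{\lambda}(p)$ is continuous and decreasing, and $S_{\lambda_{1}^{div}}(p)=0$ by testing with the first eigenfunction of $-\dive(p\nabla\cdot)$, so the intermediate value theorem yields some $\beta<\lambda_{1}^{div}$ with $0<S_{\beta}(p)<p_{_{0}}S$, and monotonicity propagates the strict inequality to all $\lambda\in[\beta,\lambda_{1}^{div}[$; taking for $\lambda^{*}$ the resulting threshold gives $\lambda^{*}\in[\tilde{\beta}_{k}\frac{n^{2}}{4},\lambda_{1}^{div}[$. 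Without this continuity--monotonicity step, or a substitute for it, your proof of part 4) does not go through; it is also precisely why the theorem can only assert the existence of $\lambda^{*}$ in that regime instead of exhibiting an explicit value as in parts 1)--3).
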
\vspace{-1.3mm}
\begin{remark}~\\
In general, the intervals $]\tilde{\gamma}(n), \lambda_{1}^{div}[$
in 2) and $[\tilde{\beta_{k}}\frac{n^{2}}{4},
\lambda_{1}^{\textsl{div}}[$ in 4), may be empty. But there are some
sufficient conditions for which the above intervals are nonempty:\\
1) If
$p_{_{0}}>\displaystyle\frac{n(n-4)}{(n-1)(n-2)^{2}}\beta_{2}\left(\diam\Omega\right)^{2}$,
then $\tilde{\gamma}(n)<\lambda_{1}^{div}$.\\Notice that this
condition is always true if $n$ is rather large.\\
2) If
$p_{_{0}}>\displaystyle\frac{\tilde{\beta_{k}}n^{2}}{(n-2)^{2}\left(\diam\Omega\right)^{2}}$,
then $\tilde{\beta_{k}}\frac{n^{2}}{4}<\lambda_{1}^{div}$.\\
\label{remarque}
\end{remark}

The second part of this work is dedicated to the study of the effect
of the geometry of the domain on the existence of solutions of our
problem. More precisely, since for $\lambda=0$ and $p\in
H^{1}(\Omega)\cap C(\bar{\Omega})$ satisfying $\nabla p(x).(x-a)>0$
a.e in $\Omega$, the problem (\ref{eqhy1}) does not have a solution
for a starshaped domain about $a$, we will modify the geometry of
$\Omega$ in order to find a solution. Therefore, let
$\Omega\subset\R^{n},\, n\ge3$ be a starshaped domain about $a$ and
let $\varepsilon>0$, we will study the existence of solution of the
problem\basn {\bf({I}_{\varepsilon})}\quad\left\{
\begin{array}{lll} -\textsl{div}(p(x)\nabla u)={u}^{q
-{1}} &\textrm{in $\Omega_{\varepsilon}$,}\\
\hspace{25.5mm}u > 0 &\textrm{in $\Omega_{\varepsilon}$,}\\
\hspace{25mm}u=0  &\textrm{on $\partial \Omega_{\varepsilon}$,}
\end{array}
\right.\easn where $\Omega_{\varepsilon}=\Omega\setminus
\bar{B}(a,\varepsilon)$.\\For $p\equiv 1$ and $\lambda=0$, the
problem $(\ref{eqhy1})$ has been first investigated in \cite{c} and
an interesting result of existence has been proved for domains with
holes. In \cite{bac}, this last result is extended to all domains
having "nontrivial" topology (in a suitable sense). This
nontrivially condition (which covers a large class of domains) is
only sufficient for the solvability but not necessary as shown by
some examples of contractible domains $\Omega$ for which
(\ref{eqhy1}) has solutions (see \cite{d}, \cite{di},
\cite{pa}).\\In other direction, \cite{le} shows that the solution
of \cite{c}, on a domain with a hole of diameter $\varepsilon$ and
center $x_{0}$, concentrates at the point $x_{0}$. In \cite{h}, the
author generalized the result of \cite{c} for the case where $u^{q}$
is replaced by $u^{q}+\mu u^{\alpha}$, where $\mu\in\R$ and
$1<\alpha<q$.\\In this work, we consider the case where $p\in
H^{1}(\Omega)\cap C(\bar{\Omega})$ and satisfying $\nabla
p(x).(x-a)>0$ a.e on $\Omega\setminus\{a\}$. The method we use in
this part is an adaptation of those used in \cite{c} and \cite{h}.
More particularly, we use the min-max techniques and a variant of
the Ambrosetti-Rabinowitz theorem, see \cite{ar}.\\The second result
of this paper is the following
\begin{theorem}~\\
\label{thhy0}There exists
$\varepsilon_{0}=\varepsilon_{0}(\Omega,p)>0$ such that for
$0<\varepsilon<\varepsilon_{0}$ the problem $(I_{\varepsilon})$ has
at least one solution in $H^{1}_{0}(\Omega_{\varepsilon})$.
\end{theorem}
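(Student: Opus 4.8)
The plan is to obtain $u_\varepsilon$ as a nonzero critical point of the energy $I_\varepsilon(u)=\frac12\int_{\Omega_{\varepsilon}}p\,|\nabla u|^2-\frac1q\int_{\Omega_{\varepsilon}}(u^+)^q$ on $H^1_0(\Omega_{\varepsilon})$. Any such critical point solves $-\dive(p\nabla u)=(u^+)^{q-1}$, and since the right-hand side is nonnegative and $p>0$ makes the operator uniformly elliptic, the strong maximum principle forces $u_\varepsilon>0$, so it is a genuine solution of $(I_\varepsilon)$. Because $\Omega_{\varepsilon}$ is starshaped about $a$ and $\nabla p(x)\cdot(x-a)>0$, the Pohozaev identity (as in parts 5) and 6) of Theorem~\ref{thhy2}) rules out minimizers of the associated weighted Sobolev quotient $S_\varepsilon=\inf\{\int_{\Omega_{\varepsilon}}p\,|\nabla u|^2:\|u\|_{L^q(\Omega_{\varepsilon})}=1\}$, so one cannot simply minimize; instead I would run a min-max (mountain-pass type) scheme in the spirit of \cite{c} and \cite{h}, using the variant of the Ambrosetti--Rabinowitz theorem of \cite{ar}.

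First I would isolate the compactness window. By the Brezis--Lieb lemma and a Struwe-type decomposition adapted to the operator $-\dive(p\nabla\cdot)$, every Palais--Smale sequence for $I_\varepsilon$ splits, after extraction, into a weak limit solving the equation plus finitely many rescaled Aubin--Talenti bubbles, each carrying energy at least $\frac1n(p_{0}S)^{n/2}$, where $S$ is the best Sobolev constant of $\R^{n}$; equivalently, at the level of the quotient, any loss of compactness costs a factor $2^{2/n}$. Hence $I_\varepsilon$ satisfies the Palais--Smale condition at every level strictly below the two-bubble threshold, i.e. for $S_\varepsilon<c<2^{2/n}S_\varepsilon$ in quotient terms. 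The task is then to produce a critical value inside this window.

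The heart of the argument is the topological min-max. Since the hole $\bar B(a,\varepsilon)$ makes $\Omega_{\varepsilon}$ non-contractible, there is an $(n-1)$-sphere surrounding $a$ that cannot be contracted in $\Omega_{\varepsilon}$, and I would exploit this. For the upper bound I would take truncated, projected bubbles $U_{\delta,\xi}$ with small concentration parameter $\delta$ and centers $\xi$ ranging over a sphere $\{|\xi-a|=\rho\}$ encircling the hole, and estimate the weighted energy; using that $p(\xi)$ is close to $p_{0}$ for $\xi$ near $a$ together with the expansion (\ref{eqhy0}), one shows that for $\varepsilon$ (hence $\rho,\delta$) small the maximum of $I_\varepsilon$ along this family lies strictly below the two-bubble threshold. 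For the lower bound I would introduce the barycenter map $\beta(u)=\int_{\Omega_{\varepsilon}}x\,|u|^q\,dx$ (suitably projected to the sphere) and argue by degree/linking: if the min-max value equalled $S_\varepsilon$, near-optimizers would concentrate at a single point, which is incompatible with the nontrivial linking of the constructed family with the hole. This pins the min-max value $c_\varepsilon$ strictly between $S_\varepsilon$ and $2^{2/n}S_\varepsilon$, so the Palais--Smale condition holds there and yields the desired critical point; $\varepsilon_0$ is precisely the threshold below which the upper estimate $c_\varepsilon<2^{2/n}S_\varepsilon$ is valid.

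The main obstacle is the quantitative energy gap, namely the strict inequality $c_\varepsilon<2^{2/n}S_\varepsilon$. In Coron's unweighted case this is already the delicate computation, and here the weight $p$ enters the bubble energies and must be controlled; the behavior (\ref{eqhy0}) of $p$ near $a$ and the monotonicity $\nabla p(x)\cdot(x-a)>0$ are exactly what make the centered family cheap while simultaneously forbidding a minimizer, so that the topology rather than concentration dictates the critical level. A secondary technical point is making the barycenter/degree argument rigorous for the weighted quotient, but this follows the scheme of \cite{c} and \cite{h} once the energy estimates are in place.
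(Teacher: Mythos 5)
Your scheme is essentially the paper's: a Coron-type min-max built on the Ambrosetti--Rabinowitz variant of \cite{ar}, the Brezis/Struwe compactness statement giving the Palais--Smale condition in the window $]\frac{1}{n}(p_{0}S)^{\frac{n}{2}},\frac{2}{n}(p_{0}S)^{\frac{n}{2}}[$, a family of truncated bubbles whose centers sweep a sphere around the hole for the upper energy estimate (Lemma~\ref{lm7} in the paper), and a concentration-plus-degree linking argument for the lower estimate (Lemmas~\ref{lmhy6} and~\ref{lm8}, where the map $F(u)=(p_{0}S)^{-\frac{n}{2}}\int_{\Omega_{\varepsilon}}x\,p(x)|\nabla u(x)|^{2}dx$ plays exactly the role of your barycenter $\beta$, with the weighted gradient density in place of $|u|^{q}$).

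However, one claim in your setup is false: $\Omega_{\varepsilon}=\Omega\setminus\bar{B}(a,\varepsilon)$ is \emph{not} starshaped about $a$ --- removing the ball is precisely what destroys starshapedness --- and the Pohozaev identity yields no obstruction on $\Omega_{\varepsilon}$, because on the inner boundary $\partial B(a,\varepsilon)$ the outward normal of $\Omega_{\varepsilon}$ points toward $a$, so $(x-a).\nu<0$ there and the boundary term has the wrong sign. Indeed, if your claim were correct it would exclude not just minimizers but the very solution you are constructing. The Pohozaev non-existence statement (Lemma~\ref{lmhy5}, and parts 5), 6) of Theorem~\ref{thhy2}) applies to the unperforated starshaped domain $\Omega$, and is the motivation for drilling the hole in the first place; the failure of minimization on $\Omega_{\varepsilon}$ must be justified differently (as in Coron's setting, the infimum of the quotient on the perforated domain is not attained), or simply sidestepped, as the paper does by passing directly to the min-max. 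Since this remark is only motivational in your outline, it does not affect the remainder of your argument, which matches the paper's proof.
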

The rest of this paper is divided into three sections. In Section 2
some preliminary results will be established. Section 3 and Section
4 are devoted respectively to the proof of Theorem~\ref{thhy2} and
the proof of Theorem~\ref{thhy0}.
\section{Some preliminary results}
We start by recalling some notations which will be frequently used
throughout the rest of this paper. First, we define
$$S=\inf_{u\in H_{0}^{1}(\Omega), \|u\|_{q}=1}\|\nabla u\|_{2}^{2}$$that corresponds
to the best constant for the Sobolev embedding
$H_{0}^{1}(\Omega)\subset L^{q}(\Omega)$. Let us denote by
$U_{{a},\varepsilon}$ an extremal function for the Sobolev
inequality
\begin{equation*}
U_{{a},\varepsilon}(x)=\frac{1}{(\varepsilon +
\vert{x-a}\vert^{2})^\frac{n-2}{2}} , \ x\in\R^{n}. \label{eqhy-1}
\end{equation*}
We set
\begin{equation}
u_{{a},\varepsilon}(x)=\zeta(x)U_{{a},\varepsilon}(x)\,,\, x\in
\R^{n}, \label{eqhy-1}
\end{equation}where $\zeta \in C^{\infty}_{0}(\bar{\Omega})$ is a fixed
function such that ${0}\leq{\zeta}\leq{1}$, and $\zeta\equiv{1}$ in
some
neighborhood of\, $a$\, included  in $\Omega$.\\
We know from \cite{bn} that \ba \|\nabla
u_{a,\varepsilon}\|_{2}^{2}=\frac{K_{1}}{\varepsilon^{\frac{n-2}{2}}}+O(1),
\label{eq2} \ea \ba
\parallel
u_{a,\varepsilon}\parallel^{2}_{q}=\frac{K_{2}}{\varepsilon^{\frac{n-2}{2}}}+O(\varepsilon)
\label{eqhy3} \ea and \ba {\parallel
u_{a,\varepsilon}\parallel^{2}_{2}}= \left\{
\begin{array}{l} \frac{K_{3}}{\varepsilon^{\frac{n-4}{2}}}+O(1) \
if\
n\geq{5}\\[\medskipamount]
\frac{\omega_{_{4}}}{2} |\log {\varepsilon }| +O(1)\ if\ n={4}
\end{array}
\right.\label{eqhy4} \ea where $K_{1}$ and $K_{2}$ are positive
constants with $\frac{K_{1}}{K_{2}}=S$, $\omega_{_{4}}$ is the area
of $S^{3}$ and
$K_{3}=\displaystyle\int_{\R^{n}}\frac{1}{(1+|x|^{2})^{n-2}}dx$.\\~\\
We shall state some auxiliary results.\\For $p\in
C^{1}(\bar{\Omega})$ or $p\in H^{1}(\Omega)\cap C(\bar{\Omega})$ and
$\nabla p(x).(x-a)\ge 0$ a.e $x\in\Omega$, we consider
$$
\alpha(p)=\frac{1}{2}\inf_{u\in
H_{0}^{1}(\Omega),u\not=0}\frac{\int_{\Omega}\nabla p(x).(x-a)\vert
\nabla u\vert^{2}dx}{\int_{\Omega}\vert u\vert^{2}dx}.
$$We easily see that $\alpha(p)\in[-\infty,+\infty[$, and we have the following result
\begin{proposition}~\\
{\bf 1)}\,If $p\in C^{1}(\Omega)$ and if there exists $b\in\Omega$
such
that $\nabla p(b)(b-a)<0$, then $\alpha(p)=-\infty$.\\
{\bf 2)}\,If $p\in H^{1}(\Omega)\cap C(\bar{\Omega})$ satisfying
(\ref{eqhy0}) and $ \nabla
p(x).(x-a)\ge0$ \,a.e $x\in\Omega$, we have\\
{\bf 2.a)}\,If $k>2$ and $p\in C^{1}(\Omega)$, then $\alpha(p)=0$ for all  $n\geq 3$.\\
{\bf 2.b)}\,If $0<k\leq 2$ and $p$ satisfies condition
(\ref{eqhy00})
 then for all $n\geq 3$ we have
$$\frac{k}{2}\beta_{k}\left(\frac{n+k-2}{2}\right)^{2}\left(\diam\Omega\right)^{k-2}
\leq \alpha(p).$$ \label{pr1}
\end{proposition}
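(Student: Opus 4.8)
The plan is to handle the three assertions separately: the first two by inserting concentrating test functions into the Rayleigh quotient defining $\alpha(p)$ (exploiting that $\int_{\Omega}|\nabla u|^2/\int_{\Omega}|u|^2$ is unbounded under concentration), and the third by reducing, via condition (\ref{eqhy00}), to a weighted Hardy inequality.

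For assertion 1), continuity of $\nabla p(\cdot).(\cdot-a)$ (valid since $p\in C^1(\Omega)$) together with $\nabla p(b).(b-a)<0$ yields a ball $B(b,r)\subset\Omega$ and a constant $c>0$ with $\nabla p(x).(x-a)\le -c$ on $B(b,r)$. I would take the rescaled bumps $u_{\op}(x)=\phi\!\left(\frac{x-b}{\op}\right)$ with $\phi\in C_0^\infty(B(0,1))$ fixed and $0<\op<r$; a change of variables gives $\int_{\Omega}|\nabla u_{\op}|^2=\op^{n-2}\|\nabla\phi\|_2^2$ and $\int_{\Omega}|u_{\op}|^2=\op^{n}\|\phi\|_2^2$, while $\int_{\Omega}\nabla p(x).(x-a)|\nabla u_{\op}|^2\le -c\,\op^{n-2}\|\nabla\phi\|_2^2$ because $u_\op$ is supported where $\nabla p.(x-a)\le -c$. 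Hence the quotient is bounded above by $-c\,\op^{-2}\|\nabla\phi\|_2^2/\|\phi\|_2^2\to-\infty$, so $\alpha(p)=-\infty$.

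For assertion 2.a), the hypothesis $\nabla p(x).(x-a)\ge0$ makes every quotient nonnegative, so $\alpha(p)\ge0$; it remains to exhibit test functions whose quotient tends to $0$. I would concentrate at $a$ instead, taking $u_\op(x)=\phi\!\left(\frac{x-a}{\op}\right)$. From (\ref{eqhy0}) and $p\in C^1$ one checks that $\nabla p(x).(x-a)=O(|x-a|^k)$ as $x\to a$, so $\sup_{B(a,\op)}\nabla p(x).(x-a)=O(\op^{k})$. Bounding the numerator by this supremum times $\int_{\Omega}|\nabla u_\op|^2$ and dividing by $\int_{\Omega}|u_\op|^2$ produces a quotient of order $\op^{k}\cdot\op^{-2}=\op^{k-2}\to0$ since $k>2$; therefore $\alpha(p)=0$.

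For assertion 2.b), condition (\ref{eqhy00}) gives $\nabla p(x).(x-a)\ge k\beta_k|x-a|^k$ a.e., whence for every $u\in\ahh\setminus\{0\}$
\[
\int_{\Omega}\nabla p(x).(x-a)|\nabla u|^2\,dx\ \ge\ k\beta_k\int_{\Omega}|x-a|^k|\nabla u|^2\,dx .
\]
The crux is then the weighted Hardy inequality
\[
\int_{\Omega}|x-a|^k|\nabla u|^2\,dx\ \ge\ \left(\frac{n+k-2}{2}\right)^{\!2}\int_{\Omega}|x-a|^{k-2}|u|^2\,dx,
\]
which I would derive from the identity $\dive\!\left(|x-a|^{k-2}(x-a)\right)=(n+k-2)|x-a|^{k-2}$ by multiplying by $u^2$, integrating by parts and applying Cauchy--Schwarz (extending $u$ by zero to $\R^n$ and approximating by functions vanishing near $a$). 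Finally, since $0<k\le2$ the exponent $k-2$ is nonpositive and $|x-a|\le\diam\Omega$ on $\Omega$, so $|x-a|^{k-2}\ge(\diam\Omega)^{k-2}$ and hence $\int_{\Omega}|x-a|^{k-2}|u|^2\ge(\diam\Omega)^{k-2}\int_{\Omega}|u|^2$. Chaining the three estimates and multiplying by $\frac12$ gives exactly $\alpha(p)\ge\frac{k}{2}\beta_k\left(\frac{n+k-2}{2}\right)^2(\diam\Omega)^{k-2}$. I expect the weighted Hardy inequality to be the main obstacle: for the full range $0<k\le2$ the weight $|x-a|^{k-2}$ is singular at $a$, so the boundary and singularity contributions in the integration by parts must be justified by a careful density argument excising a neighbourhood of $a$.
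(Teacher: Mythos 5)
Your parts 1) and 2.b) are correct and essentially the paper's own arguments: for 1) the paper uses the same concentrating bumps, namely $\varphi_{j}(x)=\varphi(j(x-b))$, which is your $u_{\varepsilon}$ with $\varepsilon=1/j$, and for 2.b) the paper performs the same reduction via (\ref{eqhy00}) and then invokes the weighted Hardy inequality as a quoted result (its Lemma~\ref{lmhy1}, attributed to Caffarelli--Kohn--Nirenberg and Hardy--Littlewood--P\'olya) rather than reproving it; your derivation from $\dive\left(|x-a|^{k-2}(x-a)\right)=(n+k-2)|x-a|^{k-2}$ plus Cauchy--Schwarz is the standard proof of that inequality, so the difference is cosmetic.

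The genuine gap is in 2.a), at the sentence ``from (\ref{eqhy0}) and $p\in C^{1}$ one checks that $\nabla p(x).(x-a)=O(|x-a|^{k})$''. This implication is false: (\ref{eqhy0}) constrains the \emph{values} of $p$, and smallness of a $C^{1}$ function does not control its gradient. Writing $p=p_{_{0}}+\beta_{k}|x-a|^{k}+\theta_{1}(x)$ with $\theta_{1}(x)=o(|x-a|^{k})$, one has $\nabla p(x).(x-a)=k\beta_{k}|x-a|^{k}+\nabla\theta_{1}(x).(x-a)$, and nothing in the hypotheses bounds $\nabla\theta_{1}(x).(x-a)$ pointwise by $C|x-a|^{k}$. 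Concretely, take $\theta_{1}$ radial in $r=|x-a|$, nondecreasing, constant except on thin annuli $r_{J}\le r\le r_{J}+\delta_{J}$ (with $r_{J}\downarrow 0$ fast) where it climbs by $h_{J}=r_{J}^{k+2}$ with slope $h_{J}/\delta_{J}=r_{J}^{1/2}$, suitably smoothed: then $p$ is Lipschitz, lies in $C^{1}(\Omega)\cap H^{1}(\Omega)\cap C(\bar{\Omega})$, satisfies (\ref{eqhy0}) and $\nabla p(x).(x-a)\ge 0$, yet on those annuli $\nabla p(x).(x-a)\approx r_{J}^{3/2}$, which is not $O(r_{J}^{k})$ when $k>2$. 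For such a $p$ your estimate gives a quotient of order $\sup_{B(a,\varepsilon)}\nabla p.(x-a)\cdot\varepsilon^{-2}\approx\varepsilon^{-1/2}$, which does not tend to $0$. The paper's proof is built precisely to avoid this obstruction: it splits off the $\nabla\theta_{1}$ term and \emph{integrates by parts}, moving the derivative onto the test function, so that only the pointwise smallness of $\theta_{1}$ itself (namely $|\theta_{1}(x)|\le|x-a|^{k}$ near $a$) is used; after rescaling that term is $O(j^{1-k})\to 0$. To close your argument you must do the same (or otherwise exploit that the set where $\nabla p.(x-a)$ is anomalously large has small measure), rather than rely on a sup bound on the gradient.
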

\begin{proof} We start by proving 1). Set $q(x)=\nabla p(x).(x-a)$, $\forall
x\in\Omega$ and let $\varphi\in C_{0}^{\infty}(\R^{n})$ such that
$0\leq \varphi\leq 1$ on $\R^{n}$, $\varphi\equiv 1$ on the ball
$\{x, |x|<r\}$, and $\varphi\equiv 0$ outside the ball $\{x,
|x|<2r\}$, where $r<1$ is a positive constant .\\
Set $\varphi_{j}(x)=\varphi(j(x-b))$ for $j\in\N^{*}$. We have \basn
\alpha(p)&\leq&\frac{1}{2}\frac{\int_{\Omega}q(x)|\nabla
\varphi_{j}(x)|^{2}dx}{\int_{\Omega}| \varphi_{j}|^{2}dx}\\[\medskipamount]
&\leq&\frac{1}{2}\frac{\int_{B(b,\frac{2r}{j})}q(x)|\nabla
\varphi_{j}(x)|^{2}dx}{\int_{B(b,\frac{2r}{j})}|
\varphi_{j}|^{2}dx}.\easn Using the change of variable $y=j(x-b)$,
we get \basn
\alpha(p)&\leq&\frac{j^{2}}{2}\frac{\int_{B(0,2r)}q(\frac{y}{j}+b)|\nabla
\varphi(x)|^{2}dx}{\int_{B(0,2r)}| \varphi|^{2}dx}.\easn Applying
the Dominated Convergence Theorem, we obtain \basn
\alpha(p)&\leq&\frac{j^{2}}{2}\left[q(b)\frac{\int_{B(0,2r)}|\nabla
\varphi(x)|^{2}dx}{\int_{B(0,2r)}| \varphi|^{2}dx}+o(1)\right].\easn
Letting $j\rightarrow\infty$, we deduce the desired result.\\Now we
will prove 2.a).\\Using (\ref{eqhy0}) and since $p\in C^{1}(\Omega)$
in a neighborhood $V$ of $a$, we write \ba
p(x)=p_{_{0}}+\beta_{k}\vert
x-a\vert^{k}+\theta_{1}(x),\label{equat9}\ea where $\theta_{1}\in
C^{1}(V)$ is such that \ba\lim_{x\rightarrow
a}\frac{\theta_{1}(x)}{|x-a|^{k}}=0 \label{equat10}.\ea Looking at
(\ref{equat10}), we deduce that there exists $0<r<1$, such that
\begin{equation}\theta_{1}(x)\leq |x-a|^{k}\quad \forall x\in
B(a,2r).\label{equat11}\end{equation} Let $\varphi\in
C_{0}^{\infty}(\R^{n})$ be a function such that $0\leq \varphi\leq
1$ on $\R^{n}$, $\varphi\equiv 1$ on the ball $\{x, |x|<r\}$, and
$\varphi\equiv 0$ outside the ball $\{x, |x|<2r\}$. Set
$\varphi_{j}(x)=\varphi(j(x-a))$ for $j\in\N^{*}$, we have \basn
0\leq\alpha(p)\leq \frac{1}{2}\frac{\int_{\Omega}\nabla
p(x).(x-a)|\nabla \varphi_{j}(x)|^{2}dx}{\int_{\Omega}|
\varphi_{j}|^{2}dx}.\easn Using (\ref{equat9}), we see that \basn
0\leq\alpha(p)\leq\frac{k\beta_{k}}{2}\frac{\int_{_{B(a,\frac{2r}{j})}}|x-a|^{k}|\nabla
\varphi_{j}(x)|^{2}dx}{\int_{_{B(a,\frac{2r}{j})}}|
\varphi_{j}|^{2}dx}+\frac{1}{2}\frac{\int_{_{B(a,\frac{2r}{j})}}\nabla\theta_{1}(x).(x-a)|\nabla
\varphi_{j}(x)|^{2}dx}{\int_{_{B(a,\frac{2r}{j})}}|
\varphi_{j}|^{2}dx}.\easn Performing the change of variable
$y=j(x-a)$, and integrating by parts the second term of the right
hand side, we obtain \basn
0\leq\alpha(p)\leq\frac{k\beta_{k}}{2j^{k-2}}\frac{\int_{_{B(0,2r)}}|y|^{k}|\nabla
\varphi(y)|^{2}dx}{\int_{_{B(0,2r)}}|
\varphi|^{2}dx}+\frac{j}{2}\frac{\int_{_{B(0,2r)}}\theta_{1}(\frac{y}{j}+a)\nabla(y
|\nabla \varphi(y)|^{2})dx}{\int_{_{B(0,2r)}}| \varphi|^{2}dx}.
\easn Using (\ref{equat11}), we write \basn
0\leq\alpha(p)\leq\frac{k\beta_{k}}{2j^{k-2}}\frac{\int_{_{B(0,2r)}}|y|^{k}|\nabla
\varphi(y)|^{2}dx}{\int_{_{B(0,2r)}}|
\varphi|^{2}dx}+\frac{1}{2j^{k-1}}\frac{\int_{_{B(0,2r)}}|y|^{k}\nabla(
|\nabla \varphi(y)|^{2}y)dx}{\int_{_{B(0,2r)}}| \varphi|^{2}dx}.
\easn Therefore, for $k>2$ we deduce that $\alpha(p)=0$, and this
finishes the proof of this case.\\
Now, in order to prove 2.b), we need to recall the following Hardy's
inequality, see for example \cite{ckn} or Theorem 330 in \cite{hlp}.
\begin{lemma}~\\
\label{lmhy1} Let $t\in\R$ such that $t+n>0$, we have $\forall u\in
H_{0}^{1}(\Omega)$
$$
\int_{\Omega}|x|^{t}|u|^{2}dx\leq (\frac{2}{n+t})^{2}\int_{\Omega}|
x.\nabla u|^{2}| x|^{t}dx.
$$
Moreover the constant $(\frac{2}{n+t})^{2}$ is optimal and is not
achieved.
\end{lemma}
\medskip
\vspace{10mm} Now we prove 2.b). Since $p$ satisfies (\ref{eqhy00}),
we have for all $u\in H_{0}^{1}(\Omega)\setminus\{0\}$,
\begin{equation}
\begin{split}
\frac{\int_{\Omega}\nabla p(x).(x-a)|\nabla
u(x)|^{2}dx}{\int_{\Omega}|u(x)|^{2}dx}\ge
k\beta_{k}\frac{\int_{\Omega} |x-a|^{k}|\nabla
u(x)|^{2}dx}{\int_{\Omega}|u(x)|^{2}dx}.
\end{split}
\nonumber
\end{equation}
By applying the last Lemma for $0<k=2+ t\leq 2$, we find
\begin{equation}
\begin{split}
\frac{\int_{\Omega}\nabla p(x).(x-a)|\nabla
u(x)|^{2}dx}{\int_{\Omega}|u|^{2}dx}\geq k\beta_{k}
\left(\frac{n+k-2}{2}\right)^{2}\left(\diam \Omega\right)^{k-2}.
\end{split}
\nonumber
\end{equation}
This implies that $\alpha (p)\geq \frac{k}{2}\beta_{k}
(\frac{n+k-2}{2})^{2}(\diam \Omega)^{k-2}$.
\end{proof}Let us give the following non-existence result
\begin{proposition}~\\
We assume that $\alpha(p)>-\infty$. There is no solution for
(\ref{eqhy1}) when $\lambda\leq \alpha(p)$ and $\Omega$ is a
starshaped domain about $a$. \label{poho}
\end{proposition}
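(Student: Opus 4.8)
The plan is to derive a Pohozaev-type identity for the weighted operator centred at $a$ and then to extract a contradiction from the sign of each term. Suppose, for contradiction, that (\ref{eqhy1}) admits a solution $u$. First I would multiply the equation by the dilation multiplier $(x-a)\cdot\nabla u$ and integrate over $\O$. On the right-hand side I would write $u^{q-1}(x-a)\cdot\nabla u=\frac{1}{q}(x-a)\cdot\nabla(u^{q})$ and $\lambda u\,(x-a)\cdot\nabla u=\frac{\lambda}{2}(x-a)\cdot\nabla(u^{2})$, and integrate by parts; since $u=0$ on $\DO$ the boundary contributions drop and, using $\dive(x-a)=n$, this side equals $-\frac{n}{q}\into u^{q}\,dx-\frac{\lambda n}{2}\into u^{2}\,dx$.

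Next I would transform the left-hand side $-\into \dive(p\nabla u)\,(x-a)\cdot\nabla u\,dx$ by integrating by parts twice. The algebraic identities $\nabla u\cdot\nabla\big((x-a)\cdot\nabla u\big)=|\nabla u|^{2}+\frac12(x-a)\cdot\nabla(|\nabla u|^{2})$ and $\dive\big(p(x-a)\big)=\nabla p\cdot(x-a)+np$ produce a bulk term proportional to $\into p|\nabla u|^{2}dx$, the weight term $-\frac12\into(\nabla p\cdot(x-a))|\nabla u|^{2}dx$, and a boundary term; on $\DO$ one uses $\nabla u=(\p_{\nu}u)\nu$ (valid because $u$ is constant on $\DO$) to reduce the boundary integrand to $p(\p_{\nu}u)^{2}((x-a)\cdot\nu)$. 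Testing the equation against $u$ itself gives the energy relation
\[
\into p|\nabla u|^{2}\,dx=\into u^{q}\,dx+\lambda\into u^{2}\,dx,
\]
and substituting it, together with $\frac{n}{q}=\frac{n-2}{2}$, makes all the $\into p|\nabla u|^{2}dx$ contributions cancel. What survives is the clean identity
\[
\lambda\into u^{2}\,dx=\frac12\into(\nabla p\cdot(x-a))|\nabla u|^{2}\,dx+\frac12\int_{\DO}p\,((x-a)\cdot\nu)\,(\p_{\nu}u)^{2}\,d\sigma.
\]

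I would then conclude by reading off signs. Because $\O$ is starshaped about $a$, $(x-a)\cdot\nu\ge0$ on $\DO$; with $p>0$ and $(\p_{\nu}u)^{2}\ge0$, the boundary integral is nonnegative. By the very definition of $\alpha(p)$ the first term on the right is at least $\alpha(p)\into u^{2}\,dx$. Hence $\lambda\into u^{2}\,dx\ge\alpha(p)\into u^{2}\,dx$, and since $\into u^{2}\,dx>0$ this already contradicts $\lambda<\alpha(p)$. For the borderline value $\lambda=\alpha(p)$ the same chain forces the boundary integral to vanish; but Hopf's boundary lemma applied to $u$ (positive inside, zero on $\DO$, solving a uniformly elliptic equation since $p\ge p_{_{0}}>0$, the sign requirement on the zeroth-order term being met where $u$ vanishes) gives $\p_{\nu}u<0$ on $\DO$, and boundedness of $\O$ forces $(x-a)\cdot\nu>0$ on a set of positive surface measure, so the boundary integral is strictly positive --- the desired contradiction.

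I expect the main obstacle to be the rigorous justification of the two integrations by parts and of the identity $\nabla u=(\p_{\nu}u)\nu$ at the regularity actually available: with $p\in H^{1}(\O)\cap C(\bar{\O})$ the solution $u$ is a priori only weak, so one must combine interior elliptic regularity with an approximation argument (regularising $p$, or approximating $\O$ by smooth strictly starshaped domains) to ensure that the Pohozaev identity and the boundary integrals are meaningful. A secondary genuine point is the equality case $\lambda=\alpha(p)$, where the nonnegativity of the boundary term must be upgraded to strict positivity via Hopf's lemma and the starshapedness of $\O$.
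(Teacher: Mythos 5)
Your proposal is correct and follows essentially the same route as the paper: the identical Pohozaev identity centred at $a$ (multiplier $(x-a)\cdot\nabla u$), combined with the energy identity from testing the equation against $u$, leading to exactly the relation $\lambda\int_{\Omega}u^{2}\,dx=\frac{1}{2}\int_{\Omega}\nabla p\cdot(x-a)\,|\nabla u|^{2}\,dx+\frac{1}{2}\int_{\partial\Omega}p\,\big((x-a)\cdot\nu\big)\,\big|\tfrac{\partial u}{\partial\nu}\big|^{2}\,d\sigma$, followed by the sign analysis via starshapedness and the definition of $\alpha(p)$. Your handling of the borderline case $\lambda=\alpha(p)$ through Hopf's lemma is in fact slightly more careful than the paper's, which disposes of it by asserting the strict inequality $(x-a)\cdot\nu>0$ on $\partial\Omega$ and the strict positivity of the boundary term.
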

\begin{proof}
This follows from Pohozev's identity. Suppose that $u$ is a solution
of (\ref{eqhy1}). We first multiply (\ref{eqhy1}) by $\nabla
u(x).(x-a)$, next we integrate over $\Omega$ and we obtain
\begin{equation}
\int_{\Omega}u^{q-1}\nabla u(x).(x-a)
dx=-\frac{n-2}{2}\int_{\Omega}|u(x)|^{q}dx, \label{porog1}
\end{equation}
\begin{equation}
\lambda\int_{\Omega}u \nabla u(x)
.(x-a)dx=-\frac{n}{2}\lambda\int_{\Omega}|u(x)|^{2}dx \label{porog2}
\end{equation}
and
\begin{equation}\begin{array}{lll} \displaystyle\int_{\Omega}-\textsl{div}(p(x)\nabla u)\nabla
u(x).(x-a)dx&=&\displaystyle\hspace{-2mm}-\frac{n-2}{2}\int_{\Omega}\hspace{-0.8mm}p(x)
|\nabla
u(x)|^{2}dx\\[\bigskipamount]&-&\displaystyle\frac{1}{2}\int_{\Omega}\hspace{-0.8mm}\nabla
p(x).(x-a)|\nabla u(x)|^{2}dx\\[\bigskipamount]
&-&\displaystyle\frac{1}{2}\int_{\partial\Omega}p(x) (x-a).\nu
|\frac{\partial u}{\partial \nu}|^{2}dx,
\end{array}\label{poroge1}\end{equation} where $\nu$ denotes the outward
normal to $\partial\Omega$.\\
Combining (\ref{porog1}), (\ref{porog2}) and (\ref{poroge1}), we
write
\begin{equation}
\begin{array}{lll}
-\frac{n-2}{2}\displaystyle\int_{\Omega}p(x) |\nabla
u(x)|^{2}dx-\frac{1}{2}\int_{\Omega}\nabla p(x).(x-a)|\nabla
u(x)|^{2}dx=\\[\bigskipamount]
\hspace{70mm}\displaystyle-\frac{n-2}{2}\int_{\Omega}|u(x)|^{q}dx-\frac{n}{2}\lambda\int_{\Omega}|u(x)|^{2}dx.
\label{eqq1}
\end{array}
\end{equation}
On the other hand, we multiply (\ref{eqhy1}) by $\frac{n-2}{2} u$
and we integrate by parts, we get
\begin{equation}
\frac{n-2}{2}\int_{\Omega}p(x) |\nabla
u(x)|^{2}dx=\frac{n-2}{2}\int_{\Omega}|u(x)|^{q}dx+\frac{n-2}{2}\lambda\int_{\Omega}|u(x)|^{2}dx.
\label{eqq2}
\end{equation}
Combining (\ref{eqq1}) and (\ref{eqq2}), we obtain
$$
\lambda \int_{\Omega}|u(x)|^{2}dx-\frac{1}{2}\int_{\Omega}\nabla
p(x).(x-a)|\nabla u(x)
|^{2}dx-\frac{1}{2}\int_{\partial\Omega}p(x)|\frac{\partial
u}{\partial \nu}|^{2}(x-a).\nu dx =0.
$$
If $\Omega$ is starshaped about $a$, then $(x-a).\nu >0$ on
$\partial\Omega$, and
$$
\lambda\int_{\Omega}|u(x)|^{2}dx-\frac{1}{2}\int_{\Omega}\nabla
p(x).(x-a)| \nabla u(x)|^{2}dx> 0.
$$
It follows that
\begin{equation*}
\lambda>\frac{1}{2}\frac{\displaystyle\int_{\Omega}\nabla
p(x).(x-a)|\nabla u(x)|^{2}dx}{\displaystyle\int_{\Omega}|u|^{2}dx}
\end{equation*}
and we obtain the desired result.
\end{proof}
\section{Existence of solutions}
\medskip
Let $\Omega\in\R^{n}$, $n\ge {3}$ be a bounded domain. In this
section, we show that (\ref{eqhy1}) possesses a solution of lower
energy less than $p_{_{0}}S$. We will use a minimization
technique.\\
Set \ba
Q_{\lambda}(u)=\frac{\int_{\Omega}{p(x)}\vert\nabla{u(x)}\vert^{2}dx-\lambda\int_{\Omega}\vert
u(x)\vert^{2}dx}{\parallel{u}\parallel^{2}_{q}}\label{fonct} \ea the
functional associated to (\ref{eqhy1}).\\We define \ba
S_{\lambda}(p)=\inf_{{u}\in
{H}^{1}_{0}(\Omega),u\not=0}Q_{\lambda}(u).\label{eqhy8}\ea Let us
remark that\basn S_{\lambda}(p)=\inf_{{u}\in
{H}^{1}_{0}(\Omega),\|u\|_{q}=1}\int_{\Omega}{p(x)}\vert\nabla{u(x)}\vert^{2}dx-\lambda\int_{\Omega}\vert
u(x)\vert^{2}dx.\easn The method used for the proof of Theorem
{\ref{thhy2}} is the following : First we show that
$S_{\lambda}(p)<p_{_{0}}S$, we then prove that the infimum
$S_{\lambda}(p)$ is achieved.\\We have the following result
\begin{lemma}~\\
If $S_{\lambda}(p)<p_{_{0}} S$ for some $\lambda>0$, then the
infimum in (\ref{eqhy8}) is achieved. \label{lmhy3}
\end{lemma}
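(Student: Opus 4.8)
The plan is to run the standard Brezis--Nirenberg concentration argument, adapted to the weighted Dirichlet form. First I would take a minimizing sequence $(u_{m})\subset H_{0}^{1}(\Omega)$ for (\ref{eqhy8}), normalized by $\|u_{m}\|_{q}=1$, so that $\into p(x)|\nabla u_{m}|^{2}dx-\lambda\into|u_{m}|^{2}dx\to S_{\lambda}(p)$. Because the relevant range is $0<\lambda<\lambda_{1}^{div}$, the quadratic form is coercive: from $p\ge p_{_{0}}>0$ and $\into p|\nabla u|^{2}\ge\lambda_{1}^{div}\into|u|^{2}$ one gets $\into p|\nabla u_{m}|^{2}-\lambda\into|u_{m}|^{2}\ge(1-\lambda/\lambda_{1}^{div})\into p|\nabla u_{m}|^{2}\ge c\|\nabla u_{m}\|_{2}^{2}$, which bounds $(u_{m})$ in $H_{0}^{1}(\Omega)$ and, via the Sobolev inequality, also yields $S_{\lambda}(p)>0$. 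Passing to a subsequence I obtain $u_{m}\rightharpoonup u$ weakly in $H_{0}^{1}(\Omega)$, $u_{m}\to u$ strongly in $L^{2}(\Omega)$ by Rellich, and $u_{m}\to u$ a.e.

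Next I would split the three terms according to the compactness of each embedding. Writing $v_{m}=u_{m}-u\rightharpoonup 0$, the weighted gradient term splits exactly, $\into p|\nabla u_{m}|^{2}=\into p|\nabla u|^{2}+\into p|\nabla v_{m}|^{2}+o(1)$, since the cross term $\into p\,\nabla u\cdot\nabla v_{m}$ is the pairing of $\nabla v_{m}\rightharpoonup 0$ against the fixed field $p\nabla u\in L^{2}(\Omega)$ and hence tends to $0$. The $L^{2}$ term converges by strong convergence, $\into|u_{m}|^{2}\to\into|u|^{2}$. The only term carrying the critical loss of compactness is the $L^{q}$ norm, and here I invoke the Brezis--Lieb lemma to write $1=\|u_{m}\|_{q}^{q}=\|u\|_{q}^{q}+\|v_{m}\|_{q}^{q}+o(1)$. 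Setting $t=\|u\|_{q}^{q}$ and $s=\lim_{m}\|v_{m}\|_{q}^{q}$ along a further subsequence, this gives $t+s=1$ and
\begin{equation*}
S_{\lambda}(p)=\Big(\into p|\nabla u|^{2}-\lambda\into|u|^{2}\Big)+\lim_{m}\into p|\nabla v_{m}|^{2}.
\end{equation*}

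Finally I would bound the two pieces from below. By the definition of $S_{\lambda}(p)$ applied to $u$ (trivially true if $u=0$), $\into p|\nabla u|^{2}-\lambda\into|u|^{2}\ge S_{\lambda}(p)\|u\|_{q}^{2}=S_{\lambda}(p)\,t^{(n-2)/n}$; and from $p\ge p_{_{0}}$ together with $\|\nabla v_{m}\|_{2}^{2}\ge S\|v_{m}\|_{q}^{2}$, $\lim_{m}\into p|\nabla v_{m}|^{2}\ge p_{_{0}}S\,s^{(n-2)/n}$. Combining,
\begin{equation*}
S_{\lambda}(p)\ge S_{\lambda}(p)\,t^{(n-2)/n}+p_{_{0}}S\,s^{(n-2)/n}.
\end{equation*}
Here the hypothesis enters: since $S_{\lambda}(p)<p_{_{0}}S$ and $2/q=(n-2)/n<1$, so that $x^{(n-2)/n}\ge x$ on $[0,1]$, if $s>0$ then
\begin{equation*}
S_{\lambda}(p)>S_{\lambda}(p)\big(t^{(n-2)/n}+s^{(n-2)/n}\big)\ge S_{\lambda}(p)(t+s)=S_{\lambda}(p),
\end{equation*}
a contradiction. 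Hence $s=0$, i.e. $t=1$, $\|u\|_{q}=1$ and $v_{m}\to 0$ in $L^{q}(\Omega)$; the lower bound then becomes an equality, forcing $\into p|\nabla u|^{2}-\lambda\into|u|^{2}=S_{\lambda}(p)$, so $u$ achieves the infimum. The main obstacle is precisely the critical exponent: the embedding $H_{0}^{1}(\Omega)\subset L^{q}(\Omega)$ is not compact, so weak convergence alone does not give $\|u\|_{q}=1$, and the entire argument hinges on using the strict gap $S_{\lambda}(p)<p_{_{0}}S$ to rule out escaping mass $s>0$. The one point needing care is the weighted gradient splitting, where the comparison $p\ge p_{_{0}}$ is what lets me fall back on the unweighted Sobolev constant $S$.
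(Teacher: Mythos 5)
For $0<\lambda<\lambda_{1}^{div}$ (equivalently, $S_{\lambda}(p)>0$) your argument is correct, and it is in substance the paper's own proof: the same Brezis--Lieb splitting of $\|u_{m}\|_{q}^{q}$, the same lower bound $\int_{\Omega}p(x)|\nabla v_{m}|^{2}dx\ge p_{_{0}}S\|v_{m}\|_{q}^{2}$ coming from $p\ge p_{_{0}}$ and the Sobolev constant, and the same use of the strict gap $S_{\lambda}(p)<p_{_{0}}S$. You merely package the conclusion as a dichotomy $t+s=1$ ruled out by the subadditivity of $x\mapsto x^{2/q}$ on $[0,1]$, where the paper multiplies its inequality (\ref{eqhy12}) by $S_{\lambda}(p)$ and substitutes into (\ref{eqhy11}); a small bonus of your packaging is that $u\not=0$ falls out of $t=1$, whereas the paper proves $u\not=0$ separately from $\lambda\|u\|_{2}^{2}\ge p_{_{0}}S-S_{\lambda}(p)>0$.

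The gap is your opening assumption that ``the relevant range is $0<\lambda<\lambda_{1}^{div}$''. The lemma asserts the conclusion for \emph{every} $\lambda>0$ with $S_{\lambda}(p)<p_{_{0}}S$, and for $\lambda\ge\lambda_{1}^{div}$ the hypothesis holds automatically (then $S_{\lambda}(p)\le0<p_{_{0}}S$), so this regime is genuinely part of the statement; the paper treats it explicitly as its case (b). In that regime two of your steps fail. First, your coercivity estimate $\int_{\Omega}p|\nabla u_{m}|^{2}dx-\lambda\int_{\Omega}|u_{m}|^{2}dx\ge(1-\lambda/\lambda_{1}^{div})\int_{\Omega}p|\nabla u_{m}|^{2}dx$ has a nonpositive constant, so boundedness of the minimizing sequence is not established; the paper instead bounds $\int_{\Omega}|u_{m}|^{2}dx\le C_{1}\|u_{m}\|_{q}^{2}=C_{1}$ by H\"older on the bounded domain, which needs only $\lambda>0$. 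Second, your final chain multiplies the inequality $t^{2/q}+s^{2/q}\ge t+s$ by $S_{\lambda}(p)$, which is legitimate only when $S_{\lambda}(p)\ge0$; if $S_{\lambda}(p)<0$ the inequality reverses and no contradiction results. The repair is short: when $S_{\lambda}(p)\le0$ one has $S_{\lambda}(p)t^{2/q}\ge S_{\lambda}(p)$ since $t^{2/q}\le1$, so your identity forces $p_{_{0}}S\,s^{2/q}\le0$, i.e. $s=0$ directly. But as written, your proof establishes only part of the lemma.
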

\begin{proof} Let $\{u_{j}\}\subset H_{0}^{1}(\Omega)$ be a minimizing sequence
for (\ref{eqhy8}) that is,
\begin{eqnarray}
\|u_{j}\|_{q}=1, \label{eqhy9}
\end{eqnarray}
\ba \int_{\Omega}p(x)|\nabla
u_{j}(x)|^{2}dx-\lambda\int_{\Omega}|u_{j}(x)|^{2}dx=S_{\lambda}(p)+o(1)\textrm{\,\,
as $j\rightarrow \infty$.} \label{eqhy10} \ea The sequence $u_{j}$
is bounded in $H_{0}^{1}(\Omega)$. Indeed, from (\ref{eqhy10}), we
have $$\int_{\Omega}p(x)|\nabla
u_{j}(x)|^{2}dx=S_{\lambda}(p)+\lambda\int_{\Omega}|u_{j}(x)|^{2}dx+o(1).$$
Using the embedding  of $L^{q}(\Omega)$ into $L^{2}(\Omega)$, there
exists a positive constant $C_{1}$ such that
$$\int_{\Omega}p(x)|\nabla
u_{j}(x)|^{2}dx\leq S_{\lambda}(p)+\lambda
\,C_{1}\|u_{j}\|_{q}^{2}+o(1).$$ Using the fact that
$$\|u_{j}\|_{q}=1,$$ we obtain
$$\int_{\Omega}p(x)|\nabla
u_{j}(x)|^{2}dx\leq S_{\lambda}(p)+\lambda\,C_{1}+o(1).$$ Since
$0<p_{_{0}}\leq p(x)$ for every $x\in \Omega$, we deduce
$$\int_{\Omega}|\nabla
u_{j}(x)|^{2}dx\leq\frac{S_{\lambda}(p)+\lambda\,C_{1}}{p_{_{0}}}+o(1).$$
This gives the desired result.\\Since $\{u_{j}\}$ is bounded in
$H_{0}^{1}(\Omega)$ we may extract a subsequence still denoted by
$u_{j}$, such that \basn u_{j}\rightharpoonup u &\textrm{weakly in
$H_{0}^{1}(\Omega)$},\\[\medskipamount]
u_{j}\rightarrow u &\textrm{strongly in $L^{2}(\Omega)$},\\[\medskipamount]
u_{j}\rightarrow u&\textrm{a.e. on $\Omega$}, \easn with
$\|u\|_{q}\leq {1}$. Set $v_{j}=u_{j}-u$, so that \basn
v_{j}\rightharpoonup {0}&\textrm{weakly in
$H_{0}^{1}(\Omega)$}\\[\medskipamount]
v_{j}\rightarrow 0 &\textrm{strongly in $L^{2}(\Omega)$},\\[\medskipamount]
v_{j}\rightarrow{0}&\textrm{a.e. on $\Omega$}. \easn Using
(\ref{eqhy9}), the definition of $S$ and the fact that
 $\displaystyle\min_{\bar{\Omega}}p(x)=p_{_{0}}>0$, we have
 $$\int_{\Omega}p(x)|\nabla u_{j}(x)|^{2}dx \geq p_{_{0}} S.$$
  From
(\ref{eqhy10}) it follows that $\lambda \|u\|_{2}^{2}\geq p_{_{0}}
S-S_{\lambda}(p)>0$ and therefore $u\not=0$. Using again
(\ref{eqhy10}) we obtain
\begin{equation} \int_{\Omega}p(x)|\nabla u(x)|^{2}dx+\int_{\Omega}p(x)|\nabla
v_{j}(x)|^{2}dx-\lambda\int_{\Omega}|u(x)|^{2}dx=S_{\lambda}(p)+o(1),
\label{eqhy11} \end{equation}
 since $v_{j}\rightharpoonup {0}$ weakly in
$H_{0}^{1}(\Omega)$. On the other hand, it follows from a result of
\cite{bl} that
$$
\|u+v_{j}\|_{q}^{q}=\|u\|_{q}^{q}+\|v_{j}\|_{q}^{q}+o(1),
$$
(which holds since $v_{j}$ is bounded in $L^{q}$ and
$v_{j}\rightarrow 0$ a.e.). Thus, by (\ref{eqhy9}), we have
$$
1=\|u\|_{q}^{q}+\|v_{j}\|_{q}^{q}+o(1)
$$
and therefore
$$
1\leq\|u\|_{q}^{2}+\|v_{j}\|_{q}^{2}+o(1),
$$
which leads to \ba 1\leq\|u\|_{q}^{q}+\frac{1}{p_{_{0}}
S}\int_{\Omega}p(x)|\nabla v_{j}(x)|^{2}dx+o(1). \label{eqhy12} \ea
We distinguish two cases:\\
(a) $S_{\lambda}(p)>0$, which corresponds to $0<\lambda<\lambda_{1}^{div}$,\\[\medskipamount]
(b) $S_{\lambda}(p)\leq0$, which corresponds to $\lambda \geq \lambda_{1}^{div}$.\\
In case (a) we deduce from (\ref{eqhy12}) that \ba
S_{\lambda}(p)\leq
S_{\lambda}(p)\|u\|_{q}^{2}+(\frac{S_{\lambda}(p)}{p_{_{0}}
S})\int_{\Omega}p(x)|\nabla v_{j}(x)|^{2}dx+o(1). \label{eqhy13} \ea
Combining (\ref{eqhy11}) and (\ref{eqhy13}) we obtain
\begin{equation*}
\begin{array}{ll}
\int_{\Omega}p(x)|\nabla u(x)|^{2}-\lambda
|u(x)|^{2}dx+\int_{\Omega}p(x)|\nabla v_{j}(x)|^{2}dx\leq
S_{\lambda}(p)\|u\|_{q}^{2}\\[\bigskipamount]
\hspace{80mm}+\displaystyle(\frac{S_{\lambda}(p)}{p_{_{0}}
S})\int_{\Omega}p(x)|\nabla v_{j}(x)|^{2}dx+o(1).
\end{array}
\end{equation*}
Thus
\begin{equation*}
\begin{array}{ll}
\int_{\Omega}p(x)|\nabla
u(x)|^{2}dx-\lambda\int_{\Omega}|u(x)|^{2}dx&\leq
S_{\lambda}(p)\|u\|_{q}^{2}\\[\bigskipamount]
&+\displaystyle\left[\frac{S_{\lambda}(p)}{p_{_{0}}
S}-1\right]\int_{\Omega}p(x)|\nabla v_{j}(x)|^{2}dx+o(1).
\end{array}
\end{equation*}
Since $S_{\lambda}(p)<p_{_{0}} S$, we deduce \ba
\int_{\Omega}p(x)|\nabla u(x)|^{2}dx-\lambda
\int_{\Omega}|u(x)|^{2}dx\leq S_{\lambda}(p)\|u\|_{q}^{2},
\label{eqhy14} \ea this means that $u$ is a minimum of
$S_{\lambda}(p)$.\\In case (b), since $\|u\|_{q}^{2}\leq{1}$, we
have $S_{\lambda}(p)\leq S_{\lambda}(p)\|u\|_{q}^{2}$. Again, we
deduce (\ref{eqhy14}) from (\ref{eqhy11}). This concludes the proof
of Lemma~{\ref{lmhy3}}.
\end{proof}
To prove assertion 1) and 2) of Theorem \ref{thhy2} (case $k\geq
2$), we need the following
\begin{lemma}~\\
a) For $n\geq {4}$, we have
$$
S_{\lambda}(p)<p_{_{0}} S\,\, \textrm{for all\, $\lambda>0$\, and
for \,$k>2$.}
$$
b) For $n=4$ and $k=2$, we have
$$
S_{\lambda}(p)<p_{_{0}} S\,\, \textrm{for all\,
$\lambda>4\beta_{2}$.}
$$
c) For $n\geq5$ and $k=2$, we have
$$
S_{\lambda}(p)<p_{_{0}} S\,\, \textrm{for all\,
$\lambda>\frac{(n-2)n(n+2)}{4(n-1)}\beta_{2}$.}
$$
d) For $n=3$ and $k\ge 2$, we have
$$
S_{\lambda}(p)<p_{_{0}}S\quad \textrm{for all\,
$\lambda>\gamma(k)$\, where $\gamma(k)$ is a positive constant.}$$
\label{lmhy2}
\end{lemma}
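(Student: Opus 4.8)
The plan is to test the quotient $Q_\lambda$ from \rf{fonct} against the concentrating family $u_{a,\varepsilon}$ and to show that $Q_\lambda(u_{a,\varepsilon})<p_{0}S$ for $\varepsilon$ small, under the stated restrictions on $\lambda$; since $S_\lambda(p)=\inf Q_\lambda$ by \rf{eqhy8}, this gives at once $S_\lambda(p)<p_{0}S$. First I would insert the local expansion \rf{eqhy0} into the numerator and split
$$\int_\Omega p(x)|\nabla u_{a,\varepsilon}|^2\,dx=p_{0}\|\nabla u_{a,\varepsilon}\|_2^2+\beta_k\int_\Omega|x-a|^k|\nabla u_{a,\varepsilon}|^2\,dx+\int_\Omega|x-a|^k\theta(x)|\nabla u_{a,\varepsilon}|^2\,dx,$$
the last integral being negligible against the second because $\theta(x)\to 0$ as $x\to a$ while the mass of $|\nabla u_{a,\varepsilon}|^2$ concentrates at $a$ (in dimension $4$ the integrability hypothesis on $\theta$ makes this term $O(1)$, and for $n=3$ the special form $p=p_{0}+\beta_k|x-a|^k$ removes it entirely). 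The first term is given by \rf{eq2}, so the whole estimate reduces to the known norms \rf{eqhy3}--\rf{eqhy4} together with a single new ingredient: the weight integral $\int_\Omega|x-a|^k|\nabla u_{a,\varepsilon}|^2\,dx$.

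That weight integral is the central computation. Using $|\nabla U_{a,\varepsilon}|^2=(n-2)^2|x-a|^2/(\varepsilon+|x-a|^2)^n$ and the substitution $x-a=\sqrt\varepsilon\,y$,
$$\int_\Omega|x-a|^k|\nabla u_{a,\varepsilon}|^2\,dx=(n-2)^2\varepsilon^{\frac{k+2-n}{2}}\int_{\R^{n}}\frac{|y|^{k+2}}{(1+|y|^2)^n}\,dy+\text{lower order},$$
valid whenever the model integral converges, i.e. for $k<n-2$. I would record the three regimes fixed by the sign of $k-(n-2)$: for $k<n-2$ the term is of order $\varepsilon^{(k+2-n)/2}$ with the explicit constant above; for $k=n-2$ it is of order $|\log\varepsilon|$; for $k>n-2$ it tends to a finite positive constant. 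Dividing by $\|u_{a,\varepsilon}\|_q^2\sim K_2\varepsilon^{-(n-2)/2}$ from \rf{eqhy3}, the weight term contributes to $Q_\lambda$ a positive quantity of order $\varepsilon^{k/2}$ (resp. $\varepsilon^{(n-2)/2}|\log\varepsilon|$, $\varepsilon^{(n-2)/2}$), while the term $-\lambda\|u_{a,\varepsilon}\|_2^2/\|u_{a,\varepsilon}\|_q^2$ contributes a negative quantity of order $-\lambda\varepsilon$ for $n\ge5$ and $-\lambda\,\varepsilon|\log\varepsilon|$ for $n=4$ by \rf{eqhy4}.

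Comparing orders then closes each case. For $k>2$ (part a) the negative $\lambda$-contribution decays strictly more slowly than the positive weight contribution for every $n\ge4$, so $Q_\lambda(u_{a,\varepsilon})-p_{0}S<0$ for all $\lambda>0$ once $\varepsilon$ is small. For $k=2$ the two corrections are of the same order, and for $n\ge5$ one gets $Q_\lambda(u_{a,\varepsilon})-p_{0}S\sim(\beta_2 C_2-\lambda K_3)\varepsilon/K_2$ with $C_2=(n-2)^2\int_{\R^{n}}|y|^4(1+|y|^2)^{-n}\,dy$; evaluating both constants by the Beta-function identities $\Gamma(2+\tfrac n2)/\Gamma(\tfrac n2)=n(n+2)/4$ and $\Gamma(n-2)/\Gamma(n)=1/[(n-1)(n-2)]$ yields exactly $C_2/K_3=(n-2)n(n+2)/[4(n-1)]$, which is the threshold of part c. The same balance in dimension $4$ is logarithmic and produces the condition $\lambda>4\beta_2$ of part b.

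The delicate point, as always, is $n=3$ (part d). There $\|u_{a,\varepsilon}\|_2^2$ no longer blows up but tends to a positive constant with a $-c\sqrt\varepsilon$ correction, the weight integral (with $k\ge2>n-2=1$) contributes only at order $\sqrt\varepsilon$, and the $O(1)$ remainder of $\|\nabla u_{a,\varepsilon}\|_2^2$ from \rf{eq2} also enters at order $\sqrt\varepsilon$ in the quotient. Thus $Q_\lambda(u_{a,\varepsilon})-p_{0}S$ has leading term a multiple of $\sqrt\varepsilon$ whose coefficient is an affine function of $\lambda$ with negative slope; $\gamma(k)$ is precisely the value of $\lambda$ at which this coefficient vanishes, so for $\lambda>\gamma(k)$ it is negative and the strict inequality holds. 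The main obstacle is not any single estimate but the careful bookkeeping of all the competing $O(\sqrt\varepsilon)$ constants that determine this sign.
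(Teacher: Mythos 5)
Your proposal follows essentially the same route as the paper: testing $Q_\lambda$ on the concentrating family $u_{a,\varepsilon}$, splitting off the weight integral $\int_\Omega|x-a|^k|\nabla u_{a,\varepsilon}|^2dx$ in the three regimes $k<n-2$, $k=n-2$, $k>n-2$, comparing orders against the $-\lambda\|u_{a,\varepsilon}\|_2^2$ term (your Beta-function evaluation reproduces exactly the paper's constant $A_2/K_3=\frac{(n-2)n(n+2)}{4(n-1)}$), and for $n=3$ carrying out the constant-order bookkeeping that makes the $\sqrt\varepsilon$-coefficient affine in $\lambda$, which is precisely how the paper produces $\gamma(k)$ (the paper merely sharpens it by taking an infimum over cut-off functions). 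The only imprecision is your claim that the $\theta$-term is always negligible against the $\beta_k$-term --- in the regime $k>n-2$ both are $O(1)$ before division --- but since both are then dominated by the $\lambda$-contribution, this does not affect any of the conclusions.
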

\begin{proof} We shall estimate the ratio $Q_{\lambda}(u)$ defined in (\ref{fonct}), with $u=u_{{a},\varepsilon}$.\\We claim that, as $\varepsilon\rightarrow 0$, we have
\begin{equation}
\begin{array}{ll}
\varepsilon^{\frac{n-2}{2}}\hspace{-3mm}
\displaystyle\int_{\Omega}p(x)\vert{\nabla
u_{{a},\varepsilon}(x)}\vert^{2}dx\leq\\[\bigskipamount] \left\{\begin{array}{llll}
\hspace{-1.5mm}p_{_{0}}K_{1}+O(\varepsilon^{\frac{n-2}{2}})&\hspace{-1.8mm}\textrm{if
$\left\{\begin{array}{lll}n\ge4\textrm{\quad and}\\
n-2<k,\end{array}\right.$}\\[\medskipamount]
\hspace{-1.5mm}p_{_{0}}K_{1}+A_{_{k}}\varepsilon^{\frac{k}{2}}+o(\varepsilon^{\frac{k}{2}})&\hspace{-1.8mm}\textrm{if $\left\{\begin{array}{lll}n\ge{4}\textrm{\quad and}\\n-2>k,\end{array}\right.$}\\[\medskipamount]
\hspace{-1.5mm}p_{_{0}}K_{1}+\displaystyle\frac{(n-2)^{2}(\beta_{n-2}+M)
\omega_{n}\varepsilon^{\frac{n-2}{2}}|\log\varepsilon|}{2}+o(\varepsilon^{\frac{n-2}{2}}|\log\varepsilon|)&\hspace{-3.5mm}\textrm{
if $\left\{\begin{array}{ll}n>4\textrm{\quad and}\\k=n-2,
\end{array}\right.$}\\[\medskipamount]
\hspace{-1.5mm}p_{_{0}}K_{1}+2 \beta_{2} \omega_{_{4}}
\varepsilon|\log\varepsilon|+o(\varepsilon|\log\varepsilon|)&\hspace{-1.8mm}\textrm{if
$\left\{\begin{array}{lll}n=4\textrm{\quad and}\\
k=2,\end{array}\right.$}
\end{array}
\right. \label{eqhy15}
\end{array}
\end{equation}
with $K_{1}=(n-2)^{2}\int_{\R^{n}}\frac{\vert y\vert^{2}}{(1+\vert
y\vert^{2})^{n}}dy$, $s=\min(\frac{k}{2},\frac{n-2}{2})$,
$A_{_{k}}=(n-2)^{2} \beta_{k}
\int_{\R^{n}}\frac{|x|^{k+2}}{(1+|x|^{2})^{n}}dx$ and $M$ is a
positive constant.
\\[2ex]
\,\,{\bf{Verification of~(\ref{eqhy15})}}\\
{\bf1. Case $n\ge 4$ and $k>0$, with $k\not=2$ if $n=4$.}\\
We have\\
 \basn
 \int_{\Omega}p(x)\vert{\nabla{
u_{{a},\varepsilon}(x)}\vert^{2}}dx&=&\int_{\Omega}\frac{
p(x)\vert\nabla\zeta(x)\vert^{2}}{(\varepsilon +
\vert{x-a}\vert^{2})^{n-2}}dx+(n-2)^{2}\int_{\Omega}\frac{p(x)\vert\zeta(x)\vert^{2}\vert{x-a}\vert^{2}}{(\varepsilon
+ \vert{x-a}\vert^{2})^{n}}dx\\[\bigskipamount]
&-&2(n-2)\int_{\Omega}\frac{p(x)\zeta(x)\nabla{\zeta(x)}(x-a)}{(\varepsilon
+ \vert{x-a}\vert^{2})^{n-1}}dx.
 \easn
 Since $\zeta\equiv{1}$ on a neighborhood of $a$, we assume that
 $\varphi\equiv 1$ on $B(a,l)$ with $l$ is a small positive constant.
 Therefore we get $|\nabla \varphi|^{2}\equiv 0$ on $B(a,l)$ and
 $\nabla\varphi(x).(x-a)=0$ on $B(a,l)$.\\Thus, we obtain

\begin{equation}
\begin{array}{lll}
 \hspace{-0.8mm}\displaystyle\int_{\Omega}p(x)\vert{\nabla{
u_{{a},\varepsilon}(x)}\vert^{2}}dx\hspace{-2mm}&=&\hspace{-4mm}\displaystyle\int_{\Omega\setminus
B(a,l)}\hspace{-1mm}\frac{
p(x)\vert\nabla\zeta(x)\vert^{2}}{(\varepsilon +
\vert{x-a}\vert^{2})^{n-2}}dx\hspace{-0.3mm}+\hspace{-0.3mm}(n-2)^{2}\hspace{-1mm}\int_{\Omega}\hspace{-1mm}\frac{p(x)\vert\zeta(x)\vert^{2}\vert{x-a}\vert^{2}}{(\varepsilon
+ \vert{x-a}\vert^{2})^{n}}dx\\[\bigskipamount]
&-&2(n-2)\displaystyle\int_{\Omega\setminus
B(a,l)}\frac{p(x)\zeta(x)\nabla{\zeta(x)}(x-a)}{(\varepsilon +
\vert{x-a}\vert^{2})^{n-1}}dx.
\end{array}
\label{domine1}
\end{equation}
Therefore, applying the Dominated Convergence Theorem,
(\ref{domine1}) becomes

\begin{equation*}
\displaystyle\int_{\Omega}p(x)\vert{\nabla{
u_{{a},\varepsilon}(x)}\vert^{2}}dx=(n-2)^{2}\displaystyle\int_{\Omega}\frac{p(x)\vert\zeta(x)\vert^{2}\vert{x-a}\vert^{2}}{(\varepsilon
+ \vert{x-a}\vert^{2})^{n}}dx+O(1).
\end{equation*}
Using (\ref{eqhy0}), a direct computation gives \basn
\varepsilon^{\frac{n-2}{2}}\int_{\Omega}p(x)\vert{\nabla
u_{{a},\varepsilon}(x)}\vert^{2}
dx&=&(n-2)^{2}p_{_{0}}\varepsilon^{\frac{n-2}{2}}\int_{\Omega}\frac{|
x-a|^{2}}{(\varepsilon +
|{x-a}|^{2})^{n}}dx\\[\medskipamount]
 &+&(n-2)^{2}\varepsilon^{\frac{n-2}{2}}
\beta_{k}\int_{\Omega}\frac{\vert x-a\vert^{k+2}}{(\varepsilon
+|{x-a}|^{2})^{n}}dx\\[\medskipamount]&+&(n-2)^{2}\varepsilon^{\frac{n-2}{2}}\int_{\Omega}\frac{|
x-a|^{k+2}\theta(x)}{(\varepsilon+|x-a|^{2})^{n}}dx\\[\medskipamount]
&+&(n-2)^{2}\varepsilon^{\frac{n-2}{2}}\int_{\Omega}\frac{|
x-a|^{k+2}(\beta_{k}+\theta(x))(|\zeta(x)|^{2}-{1})}{(\varepsilon +
|{x-a}|^{2})^{n}}dx\\[\medskipamount]
&+&O(\varepsilon^{\frac{n-2}{2}}). \easn Using again the definition
of $\zeta$, and applying the Dominated Convergence Theorem, we
obtain \basn
\varepsilon^{\frac{n-2}{2}}\int_{\Omega}p(x)\vert{\nabla
u_{{a},\varepsilon}(x)}\vert^{2} dx&\hspace{-1mm}=\hspace{-1mm}&
(n-2)^{2}p_{_{0}}\varepsilon^{\frac{n-2}{2}}\hspace{-3mm}\int_{\Omega}\hspace{-1mm}\frac{|
x-a|^{2}}{(\varepsilon
\hspace{-0.5mm}+\hspace{-0.5mm}|{x-a}|^{2})^{n}}dx\\[\medskipamount]&+&\hspace{-0.8mm}(n-2)^{2}\varepsilon^{\frac{n-2}{2}}
\beta_{k}\hspace{-1.5mm}\int_{\Omega}\hspace{-1.5mm}\frac{\vert
x-a\vert^{k+2}}{(\varepsilon +
\vert{x-a}\vert^{2})^{n}}\\[\medskipamount]
&\hspace{-1mm}+\hspace{-1mm}&(n-2)^{2}\varepsilon^{\frac{n-2}{2}}\int_{\Omega}\frac{|
x-a|^{k+2}\theta(x)}{(\varepsilon+|x-a|^{2})^{n}}dx+O(\varepsilon^{\frac{n-2}{2}}).
\easn Here we will consider the following three subcases:\\{\bf 1.1.
If $n-2>k$},
 \begin{equation*}
 \begin{array}{llll}
 \displaystyle\varepsilon^{\frac{n-2}{2}}\int_{\Omega}p(x)\vert{\nabla
u_{{a},\varepsilon}(x)}\vert^{2} dx\\[\medskipamount]
\hspace{26mm}=\displaystyle p_{_{0}}(n-2)^{2}\varepsilon^\frac{n-2}{2}\left\lbrack\int_{\R^{n}}\frac{|x-a|^{2}}{(\varepsilon+|x-a|^{2})^{n}}dx-\int_{\R^{n}\setminus\Omega}\frac{|x-a|^{2}}{(\varepsilon+|x-a|^{2})^{n}}dx\right\rbrack\\[\medskipamount]
\\[\medskipamount]
\hspace{26mm}=\displaystyle(n-2)^{2}\varepsilon^\frac{n-2}{2}\left\lbrack\hspace{-0.8mm}\int_{\R^{n}}\hspace{-1.5mm}\frac{|x-a|^{k+2}(\beta_{k}+\theta(x))}{(\varepsilon+|
x-a|^{2})^{n}}-\hspace{-0.8mm}\int_{\R^{n}\setminus\Omega}\hspace{-5mm}\frac{\vert
x-a\vert^{k+2}(\beta_{k}+\theta(x))}{(\varepsilon+\vert
x-a\vert^{2})^{n}}\right\rbrack\\[\medskipamount]
\hspace{26mm}=O(\varepsilon^{\frac{n-2}{2}}).
\end{array}\end{equation*} Using a simple change of variable and applying the Dominated
Convergence Theorem, we find \basn
\varepsilon^{\frac{n-2}{2}}\hspace{-1mm}
\int_{\Omega}\hspace{-2mm}p(x)\vert{\nabla
u_{{a},\varepsilon}(x)}\vert^{2}
dx&=&\displaystyle\hspace{-0.5mm}p_{_{0}}\hspace{-1.8mm}\int_{\R^{n}}\hspace{-2.1mm}\frac{|y|^{2}}{(1+|y|^{2})^{n}}\hspace{-0.4mm}+\hspace{-0.4mm}(n-2)^{2}\varepsilon^{\frac{k}{2}}\hspace{-1.5mm}\int_{\R^{n}}\hspace{-2.9mm}\frac{|y|^{k+2}(\beta_{k}+\theta(a+\varepsilon^\frac{1}{2}
y))}{(1+|y|^{2})^{n}}dy\\[\medskipamount]
&+&o(\varepsilon^{\frac{k}{2}}). \easn The fact that $\theta(x)$
tends to $0$ when $x$ tends to $a$ gives that \basn
\varepsilon^{\frac{n-2}{2}} \int_{\Omega}p(x)\vert{\nabla
u_{{a},\varepsilon}(x)}\vert^{2}dx=
p_{_{0}}K_{1}+A_{_{k}}\varepsilon^{\frac{k}{2}}+o(\varepsilon^{\frac{k}{2}}),
\easn with $K_{1}=(n-2)^{2}\int_{\R^{n}}\frac{\vert
y\vert^{2}}{(1+\vert y\vert^{2})^{n}}dy$ and
$A_{_{k}}=\beta_{k}\int_{\R^{n}}\frac{\vert y\vert^{k+2}}{(1+\vert
y\vert^{2})^{n}}dy$. \\~\\{\bf 1.2. If ${n-2}<k$}, \basn
\varepsilon^{\frac{n-2}{2}} \int_{\Omega}p(x)\vert{\nabla
u_{{a},\varepsilon}(x)}\vert^{2}dx
&=&p_{_{0}}K_{1}+(n-2)^{2}\beta_{k}\varepsilon^{\frac{n-2}{2}}\int_{\Omega}\frac{\vert
x-a\vert^{k+2}}{(\varepsilon+\vert
x-a\vert^{2})^{n}}dx\\[\medskipamount]
&+&(n-2)^{2}\varepsilon^{\frac{n-2}{2}}\int_{\Omega}\frac{\vert
x-a\vert^{k+2}\theta(x)}{(\varepsilon+\vert
x-a\vert^{2})^{n}}dx+O(\varepsilon^{\frac{n-2}{2}}).\easn Since
$\Omega$ is a bounded domain, there exists some positive constant
$R$ such that $\Omega \subset B(a,R)$ and thus
\begin{equation*}\begin{array}{llll}\displaystyle \varepsilon^{\frac{n-2}{2}} \int_{\Omega}p(x)|{\nabla
u_{{a},\varepsilon}(x)}|^{2} dx\hspace{-1mm}=p_{_{0}}K_{1}+O(\varepsilon^{\frac{n-2}{2}})\\[\bigskipamount]\displaystyle
\hspace{4mm}+(n-2)^{2}\varepsilon^{\frac{n-2}{2}}\left\lbrack
\int_{B(a,R)}\hspace{-1.5mm}\frac{|
x-a|^{k+2}(\beta_{k}+\theta(x))}{(\varepsilon+|
x-a|^{2})^{n}}dx-\hspace{-1mm}\int_{B(a,R)\setminus\Omega}\hspace{-2.3mm}\frac{|
x-a|^{k+2}(\beta_{k}+\theta(x)}{(\varepsilon+|
x-a|^{2})^{n}}dx\right\rbrack.\end{array}\end{equation*} By a simple
change of variable, we get \basn \varepsilon^{\frac{n-2}{2}}
\int_{\Omega}\hspace{-1mm}p(x)\vert{\nabla
u_{{a},\varepsilon}(x)}\vert^{2}
dx\hspace{-0.5mm}&=&p_{_{0}}K_{1}+\hspace{-0.5mm}(n-2)^{2}\varepsilon^{\frac{n-2}{2}}\int_{B(0,R)}\hspace{-0.5mm}\frac{|y|^{k+2}(\beta_{k}+\theta(a+ y))}{(\varepsilon+|y|^{2})^{n}}dy\\[\medskipamount]
&+&O(\varepsilon^{\frac{n-2}{2}}).\easn Using the definition of
$\theta$ given by (\ref{eqhy0}), there exists a positive constant
$M$ such that \basn \varepsilon^{\frac{n-2}{2}}
\int_{\Omega}p(x)\vert{\nabla u_{{a},\varepsilon}(x)}\vert^{2}
dx&\leq&
p_{_{0}}K_{1}+(n-2)^{2}\varepsilon^{\frac{n-2}{2}}(\beta_{k}+M)\int_{B(0,R)}\frac{|y|^{k+2}}{(\varepsilon+|y|^{2})^{n}}dy\\[\medskipamount]
&+&O(\varepsilon^{\frac{n-2}{2}}). \easn Applying the Dominated
Convergence Theorem we deduce that \basn \varepsilon^{\frac{n-2}{2}}
\int_{\Omega}p(x)|{\nabla u_{{a},\varepsilon}(x)}|^{2}dx\leq
p_{_{0}}K_{1}+O(\varepsilon^{\frac{n-2}{2}})
\easn and this completes the proof of (\ref{eqhy15}) in this case.\\
{\bf 1.2. If $k={n-2}$},\basn \varepsilon^{\frac{n-2}{2}}
\int_{\Omega}p(x)\vert{\nabla u_{{a},\varepsilon}(x)}\vert^{2}dx
&=&p_{_{0}}K_{1}+(n-2)^{2}\beta_{n-2}\varepsilon^{\frac{n-2}{2}}\int_{\Omega}\frac{\vert
x-a\vert^{n}}{(\varepsilon+\vert
x-a\vert^{2})^{n}}dx\\
&+&(n-2)^{2}\varepsilon^{\frac{n-2}{2}}\int_{\Omega}\frac{\vert
x-a\vert^{n}\theta(x)}{(\varepsilon+\vert
x-a\vert^{2})^{n}}dx+O(\varepsilon^{\frac{n-2}{2}}). \easn Since
$\Omega$ is a bounded domain, there exists some positive constant
$R$ such that $\Omega \subset B(a,R)$ and thus
\begin{equation*}\begin{array}{lll} \varepsilon^{\frac{n-2}{2}}
\displaystyle\int_{\Omega}p(x)\vert{\nabla
u_{{a},\varepsilon}(x)}\vert^{2}
dx=p_{_{0}}K_{1}+O(\varepsilon^{\frac{n-2}{2}})\\[\bigskipamount]
+(n-2)^{2}\varepsilon^{\frac{n-2}{2}}\displaystyle\left\lbrack\int_{B(a,R)}\hspace{-1.8mm}\frac{\vert
x-a\vert^{n}(\beta_{n-2}+\theta(x))}{(\varepsilon+\vert
x-a\vert^{2})^{n}}dx\hspace{-0.7mm}-\hspace{-0.7mm}\int_{B(a,R)\setminus\Omega}\hspace{-3.5mm}\frac{\vert
x-a\vert^{n}(\beta_{n-2}+\theta(x))}{(\varepsilon+\vert
x-a\vert^{2})^{n}}dx\right\rbrack.\end{array}\end{equation*} Hence
\basn \varepsilon^{\frac{n-2}{2}} \int_{\Omega}p(x)\vert{\nabla
u_{{a},\varepsilon}(x)}\vert^{2}
dx&=&p_{_{0}}K_{1}+(n-2)^{2}\varepsilon^{\frac{n-2}{2}}\int_{B(a,R)}\frac{|x-a|^{n}(\beta_{n-2}+\theta(x))}{(\varepsilon+\vert
x-a\vert^{2})^{n}}dx\\[\medskipamount]
&+&O(\varepsilon^{\frac{n-2}{2}}).\easn Using the definition of
$\theta$ given by (\ref{eqhy0}), there exists a positive constant
$M$ such that \begin{equation}\begin{array}{ll}
\displaystyle\varepsilon^{\frac{n-2}{2}}\hspace{-0.5mm}
\int_{\Omega}\hspace{-0.5mm}p(x)\vert{\nabla
u_{{a},\varepsilon}(x)}\vert^{2}
dx\hspace{-0.5mm}&\hspace{-1mm}\leq\hspace{-0.5mm}\displaystyle
p_{_{0}}K_{1}+\hspace{-0.5mm}(n-2)^{2}(\beta_{n-2}+M)\varepsilon^{\frac{n-2}{2}}\hspace{-0.3mm}\int_{B(a,R)}\hspace{-1.5mm}\frac{|x-a|^{n}}{(\varepsilon+\vert
x-a\vert^{2})^{n}}dx\\[\medskipamount]
&+O(\varepsilon^{\frac{n-2}{2}})\label{ehy1}.\end{array}\end{equation}
On the other hand, an easy computation gives \begin{equation*}
\begin{array}{ll}
\varepsilon^{\frac{n-2}{2}}\displaystyle\int_{B(a,R)}\frac{|
x-a|^{n}}{(\varepsilon+|
x-a|^{2})^{n}}dx&=\displaystyle\omega_{n}\varepsilon^{\frac{n-2}{2}}\int_{0}^{R}\frac{r^{2n-1}}{(\varepsilon+r^{2})^{n}}dr\\[\bigskipamount]
&=\displaystyle\frac{\omega_{n}}{2n}\varepsilon^{\frac{n-2}{2}}\int_{0}^{R}\frac{((\varepsilon+r^{2})^{n})'}{(\varepsilon+r^{2})^{n}}dr+O(\varepsilon^{\frac{n-2}{2}})
\end{array}
\end{equation*}
and
\begin{equation} \varepsilon^{\frac{n-2}{2}}\int_{B(a,R)}\frac{|
x-a|^{n}}{(\varepsilon+|
x-a|^{2})^{n}}dx=\frac{\omega_{n}}{2}\varepsilon^{\frac{n-2}{2}}|\log\varepsilon|+o(\varepsilon^{\frac{n-2}{2}}|\log\varepsilon|).
\label{ehy2}
\end{equation}
Inserting (\ref{ehy2}) into (\ref{ehy1}) we obtain
\begin{equation*}\varepsilon^{\frac{n-2}{2}}
\int_{\Omega}p(x)\vert{\nabla u_{{a},\varepsilon}(x)}\vert^{2}
dx\leq p_{_{0}}K_{1}+\frac{(n-2)^{2}(\beta_{n-2}+M)
\omega_{n}}{2}\varepsilon^{\frac{n-2}{2}}|\log\varepsilon|+o(\varepsilon^{\frac{n-2}{2}}|\log\varepsilon|).
\end{equation*}
{\bf 2)Case $n=4$ and $k=2$}.\\As we have announced in the
introduction, we assume in this case the following additional
condition on $\theta$:
$\int_{\B(a,1)}\frac{\theta(x)}{|x-a|^{4}}dx<\infty$. We have
\begin{eqnarray*}
\int_{\Omega}p(x)|\nabla
u_{a,\varepsilon}|^{2}dx&=&\int_{\Omega}\frac{p(x)|\nabla
\zeta(x)|^{2}}{(\varepsilon+|x-a|^{2})^{2}}dx+4\int_{\Omega}\frac{p(x)|\zeta(x)|^{2}|x-a|^{2}}{(\varepsilon+|x-a|^{2})^{4}}dx\\[\medskipamount]
&-&4\int_{\Omega}\frac{p(x)\zeta(x)\nabla\zeta(x)
(x-a)}{(\varepsilon+|x-a|^{2})^{3}}dx.\\[\medskipamount]
\end{eqnarray*}
Using ({\ref{eqhy0}}) and the fact that $\zeta\equiv 1$ near $a$, it
follows that
\begin{eqnarray*}
\int_{\Omega}p(x)|\nabla u_{a,\varepsilon}|^{2}dx&=&4p_{_{0}}\int_{\Omega}\frac{|\zeta(x)|^{2}|x-a|^{2}}{(\varepsilon+|x-a|^{2})^{4}}dx+4\beta_{2}\int_{\Omega}\frac{|\zeta(x)|^{2}|x-a|^{4}}{(\varepsilon+|x-a|^{2})^{4}}dx\\[\medskipamount]
&+&4\int_{\Omega}\frac{|x-a|^{4}\theta(x)}{(\varepsilon+|x-a|^{2})^{4}}dx+O(1),\\[\medskipamount]
&=&\frac{4p_{_{0}}}{\varepsilon}\int_{\R^{n}}\frac{|y|^{2}}{(1+|y|^{2})^{4}}dy+4\int_{\Omega}\frac{|x-a|^{4}(\beta_{k}+\theta(x))}{(\varepsilon+|x-a|^{2})^{4}}dx+O(1).
\end{eqnarray*}Since $\int_{B(a,1)}\frac{\theta(x)}{|x-a|^{4}}dx<\infty$, we obtain \basn
\int_{\Omega}\frac{|x-a|^{4}\theta(x)}{(\varepsilon+|
x-a|^{2})^{4}}dx&=&\int_{\Omega}\frac{\theta(x)}{|x-a|^{4}}dx+o(1)\\[\medskipamount]
&=&O(1).\easn Consequently \basn \int_{\Omega}p(x)|\nabla
u_{a,\varepsilon}|^{2}dx=\frac{4p_{_{0}}}{\varepsilon}\int_{\R^{n}}\frac{|y|^{2}}{(1+|y|^{2})^{4}}dy+4\beta_{k}\int_{\Omega}\frac{|x-a|^{4}}{(\varepsilon+|x-a|^{2})^{4}}dx+O(1).
\easn Let $R_{i}>0$, $i=1,2$ such that
$$
\int_{|x-a|\leq
R_{1}}\frac{|x-a|^{4}}{(\varepsilon+|x-a|^{2})^{4}}dx\leq
\int_{\Omega}\frac{|x-a|^{4}}{(\varepsilon+|x-a|^{2})^{4}}dx \leq
\int_{|x-a|\leq
R_{2}}\frac{|x-a|^{4}}{(\varepsilon+|x-a|^{2})^{4}}dx.
$$
We see that
\begin{eqnarray*}
\int_{|x-a|\leq
R}\frac{|x-a|^{4}}{(\varepsilon+|x-a|^{2})^{4}}dx&=&\omega_{_{4}}\int_{0}^{R}\frac{r^{7}}{(\varepsilon+r^{2})^{4}}dr,\\[\medskipamount]
&=&\frac{1}{8}\omega_{_{4}}\int_{0}^{R}\frac{((\varepsilon+r^{2})^{4})'}{(\varepsilon+r^{2})^{4}}dr-\omega_{_{4}}\int_{0}^{R}\frac{r\varepsilon^{3}+3r^{3}\varepsilon^{2}+3\varepsilon
r^{4}}{(\varepsilon+r^{2})^{4}}dr,\\[\medskipamount]
&=&\frac{1}{2}\omega_{_{4}}|\log
\varepsilon|-\omega_{_{4}}\int_{0}^{\frac{R}{\varepsilon^{\frac{1}{2}}}}\frac{t+3t^{3}+3t^{5}}{(1+t^{2})^{4}}dt + O(1),\\[\medskipamount] &=&\frac{1}{2}\omega_{_{4}}|\log \varepsilon|+O(1).
\end{eqnarray*}
Hence, we have
\begin{eqnarray*}
\int_{\Omega}p(x)|\nabla u_{a,\varepsilon}|^{2}dx=\frac{p_{_{0}}
K_{1}}{\varepsilon}+2\beta_{2}\omega_{_{4}}|\log \varepsilon|+O(1),
\end{eqnarray*}
where $K_{1}=\int_{\R^{n}}\frac{|y|^{2}}{(1+|y|^{2})^{4}}dy$. This completes the proof of~(\ref{eqhy15}).\\Let us come back to the proof of Lemma~\ref{lmhy2}.\\
It is convenient to rewrite (\ref{eqhy15}) as
\begin{equation}
\varepsilon^{\frac{n-2}{2}}\int_{\Omega}p(x)|\nabla
u_{a,\varepsilon}|^{2}dx\leq\left\{
\begin{array}{lll}
p_{_{0}}K_{1}+o(\varepsilon)
&\textrm{if  $n\geq{5}$, and $k>2$ },\\[\medskipamount]
p_{_{0}}K_{1}+A_{2} \varepsilon+o(\varepsilon)
&\textrm{if  $n\geq{5}$, and $k=2$ },\\[\medskipamount]
p_{_{0}}K_{1} + A_{_{k}} \varepsilon^{\frac{k}{2}}
+o(\varepsilon^{\frac{k}{2}})
&\textrm{if  $n\geq{4}$, and $k< 2$ },\\[\medskipamount]
p_{_{0}}K_{1}+o(\varepsilon)&\textrm{ if $n={4}$, and $k>2$ },\\[\medskipamount]
p_{_{0}}K_{1}+2\omega_{_{4}}\beta_{2}\varepsilon|\log {\varepsilon
}| +o(\varepsilon|\log {\varepsilon }|)&\textrm{ if $n={4}$, and
$k=2$ }.
\end{array}
\right. \label{eqhy15'}
\end{equation}
Combining~(\ref{eqhy15'}), (\ref{eqhy3}) and (\ref{eqhy4}), we
obtain
\begin{equation}
S_{\lambda}(p)\leq Q_{\lambda}(u_{a,\varepsilon})\leq\left\{
\begin{array}{lll}
p_{_{0}}S-\lambda\frac{K_{3}}{K_{2}}\varepsilon+o(\varepsilon)
&\textrm{if  $n\geq{5}$, and $k>2$ },\\[\medskipamount]
p_{_{0}}S-(\lambda-C) \frac{K_{3}}{K_{2}} \varepsilon+o(\varepsilon)
&\textrm{if  $n\geq{5}$, and $k=2$ },\\[\medskipamount]
p_{_{0}}S + A_{_{k}} \varepsilon^{\frac{k}{2}}
+o(\varepsilon^{\frac{k}{2}})
&\textrm{if  $n\geq{4}$, and $k< 2$ },\\[\medskipamount]
p_{_{0}}S-\lambda \frac{\omega_{_{4}}}{2K_{2}}\varepsilon|\log
{\varepsilon }| +o(\varepsilon|\log
{\varepsilon }|)&\textrm{ if $n={4}$, and $k>2$ },\\[\medskipamount]
p_{_{0}}S-\frac{\omega_{_{4}}}{2 K_{2}}[\lambda
-4\beta_{2}]\varepsilon|\log {\varepsilon }| +o(\varepsilon|\log
{\varepsilon }|)&\textrm{ if $n={4}$, and $k=2$
},\\[\medskipamount]
\end{array}
\right. \label{eqhy16}
\end{equation}
with $C=\frac{A_{2}}{K_{3}}=\frac{\beta_{2} (n-2)n(n+2)}{4(n-1)}
$.\\~\\Assertions a), b) and c) of Lemma~\ref{lmhy2} follow directly
for
$\varepsilon$ small enough.\\
Now we prove d) of Lemma~\ref{lmhy2} (case $n=3$ and $k\geq 2$). We
will estimate the ratio
$$
Q_{\lambda}(u)=\frac{\int_{\Omega}p(x)|\nabla u|^{2}dx-\lambda
\|u\|_{2}^{2}}{\|u\|_{q}^{2}}
$$
with
$$
u(x)=u_{\varepsilon,a}(r)=\frac{\zeta(r)}{(\varepsilon+r^{2})^{\frac{1}{2}}},
\textrm{$r=|x|$, $\varepsilon>0$},
$$
where $\zeta$ is a fixed smooth function satisfying
 $0\leq\zeta\leq1$, $\zeta=1$ in
$\{x,\, |x-a|<\frac{R}{2}\}$ and $\zeta=0$ in $\{x,\, |x-a|\geq R\}$, where $R$ is a positive constant such that $B(a,R)\subset \Omega$.\\
We claim that, as $\varepsilon \rightarrow 0$,
\begin{equation}
\int p(x)|\nabla
u_{a,\varepsilon}(x)|^{2}dx=\frac{p_{_{0}}K_{1}}{\varepsilon^{\frac{1}{2}}}+\omega_{_{3}}\int_{0}^{R}(p_{0}+\beta_{k}r^{k})|\zeta'(r)|^{2}dr+\omega_{_{3}}
k\int_{0}^{R}|\zeta|^{2}r^{k-2}dr+o(1). \label{eq21}
\end{equation}
And from \cite{bn}, we already have\\
\begin{eqnarray}
\|\nabla
u_{a,\varepsilon}\|_{2}^{2}=\frac{K_{1}}{\varepsilon^{\frac{1}{2}}}+\omega_{3}\int_{0}^{R}|\zeta'(r)|^{2}dr+O(\varepsilon^{\frac{1}{2}}),
\label{eqb}
\end{eqnarray}
\begin{eqnarray}
\|u_{a,\varepsilon}\|_{6}^{2}=\frac{K_{2}}{\varepsilon^{\frac{1}{2}}}+O(\varepsilon^{\frac{1}{2}}),
\label{eq22}
\end{eqnarray}
\begin{eqnarray}
\|u_{a,\varepsilon}\|_{2}^{2}=\omega_{_{3}}
\int_{0}^{R}\zeta^{2}(r)dr+O(\varepsilon^{\frac{1}{2}}),
\label{eq23}
\end{eqnarray}
where $K_{1}$ and $K_{2}$ are positive constants such that
$\frac{K_{1}}{K_{2}}=S$ and $\omega_{_{3}}$ is the area of $S^{2}$
.\\~\\
{\bf Verification of~(\ref{eq21}).}\\
Using (\ref{eqhy0}), (\ref{eqb}) and the fact that $\zeta=0$ in
$\{x,\, |x-a|\geq R\}$, we write
\begin{eqnarray*}
\int p(x)|\nabla
u_{a,\varepsilon}(x)|^{2}dx&=&\frac{p_{_{0}}K_{1}}{\varepsilon^{\frac{1}{2}}}+\omega_{_{3}}
p_{_{0}}\int_{0}^{R}|\zeta'(r)|^{2}dr\\[\medskipamount]
&+&\omega_{_{3}}\beta_{k}\int_{0}^{R}\left[\frac{|\zeta'(r)|^{2}}{\varepsilon+r^{2}}-\frac{2
r\zeta(r)\zeta'(r)}{(\varepsilon+r^{2})^{2}}+\frac{r^{2}\zeta^{2}(r)}{(\varepsilon+r^{2})^{3}}\right]r^{k+2}dr\\[\medskipamount]
&+&O(\varepsilon^{\frac{1}{2}}).
\end{eqnarray*}
The fact that $\zeta=1$ in $\{x,\, |x-a|<\frac{R}{2}\}$,
$\zeta'(0)=0$ and $\zeta(R)=0$ gives
\begin{eqnarray*}
-2\int_{0}^{R}\frac{\zeta(r)\zeta'(r)
r^{k+3}}{(\varepsilon+r^{2})^{2}}dr=(k+3)\int_{0}^{R}\frac{|\zeta(r)|^{2}r^{k+2}}{(\varepsilon+r^{2})^{2}}dr-4\int_{0}^{R}\frac{|\zeta(r)|^{2}r^{k+4}}{(\varepsilon+r^{2})^{3}}dr.
\end{eqnarray*}
Consequently
\begin{eqnarray*}
\int_{\Omega} p(x)|\nabla
u_{a,\varepsilon}(x)|^{2}dx&=&\frac{p_{_{0}}K_{1}}{\varepsilon^{\frac{1}{2}}}+\omega_{_{3}}
p_{_{0}} \int_{0}^{R}|\zeta'(r)|^{2}dr+\omega_{_{3}}\beta_{k}
\int_{0}^{R}\frac{|\zeta'(r)|^{2}r^{k+2}}{\varepsilon+r^{2}}dr\\[\medskipamount]&-&\hspace{-1mm}3\omega_{_{3}}\beta_{k}\int_{0}^{R}\hspace{-0.5mm}\frac{|\zeta(r)|^{2}r^{k+4}}{(\varepsilon+r^{2})^{3}}dr
+(k+3)\omega_{_{3}}\beta_{k}\int_{0}^{R}\frac{|\zeta(r)|^{2}r^{k+2}}{(\varepsilon+r^{2})^{2}}dr\\[\medskipamount]
&+&O(\varepsilon^{\frac{1}{2}}).
\end{eqnarray*}
Applying the Dominated Convergence Theorem, we get the desired
result.\\Combining (\ref{eq21}), (\ref{eq22}) and (\ref{eq23}), we
obtain
\begin{eqnarray*}
Q_{\lambda}(u_{a,\varepsilon})\hspace{-1.8mm}&\hspace{-1.8mm}=\hspace{-1.8mm}&\hspace{-1.8mm}p_{_{0}}S+\omega_{_{3}}\hspace{-1mm}\left[\int_{0}^{R}\hspace{-1.5mm}(p_{_{0}}+\beta_{k}r^{k})|\zeta'(r)|^{2}dr\hspace{-1mm}+\hspace{-1mm}
k\beta_{k}\int_{0}^{R}\hspace{-1mm}|\zeta(r)|^{2}r^{k-2}dr\hspace{-1mm}-\hspace{-1mm}\lambda\hspace{-1.3mm}
\int_{0}^{R}\hspace{-2mm}\zeta^{2}(r)dr\right]\hspace{-1mm}\frac{\varepsilon^{\frac{1}{2}}}{K_{2}}\\[\medskipamount]&+&O(\varepsilon),
\end{eqnarray*}
thus,
\begin{equation}
\begin{array}{ll}
Q_{\lambda}(u_{a,\varepsilon})&=p_{_{0}}
S+\frac{\omega_{_{3}}\int_{0}^{R}\zeta^{2}(r)dr}{K_{2}}\left[\frac{\int_{0}^{R}(p_{_{0}}+\beta_{k}r^{k})|\zeta'(r)|^{2}dr+k
\int_{0}^{R}|\zeta(r)|^{2}r^{k-2}dr}{\int_{0}^{R}|\zeta(r)|^{2}dr}-\lambda\right]\varepsilon^{\frac{1}{2}}\\[\medskipamount]&+O(\varepsilon).
\end{array}
\label{eq24}
\end{equation}

$$\textrm{Set\quad}D(k,\zeta)=\frac{\int_{0}^{R}(p_{_{0}}+\beta_{k}r^{k})|\zeta'(r)|^{2}dr+k
\int_{0}^{R}|\zeta(r)|^{2}r^{k-2}dr}{\int_{0}^{R}|\zeta(r)|^{2}dr}
\textrm{\,\, and\,\,} \gamma(k)=\inf_{H}D(k,\zeta)$$where $H$ is
defined by\\~\\$H=\{\zeta\in C_{0}^{\infty}(\bar{\Omega}),\,
0\leq\zeta\leq1,\,$ $\zeta=1$ in $\{x,\, |x-a|<\frac{R}{2}\}$ and
$\zeta=0$ in $\{x,\, |x-a|\geq R\}\}$.\\This finishes the proof of
Lemma {\ref{lmhy2}}.
\end{proof} Now, we go back to proof of assertion 3) in Theorem~{\ref{thhy2}}
(case $0<k<2$).\\First of all, let us emphasize that if the domain
$\Omega$ is starshaped about $a$, the assertion 3) is more
interesting. Indeed, it gives a better estimate of the least value
of the parameter $\lambda$ over which there is a solution to problem
(\ref{eqhy1}).\\
In the case of a non-starshaped domain, combining the fact that
$S_{0}(p)=p_{_{0}}S$ with the properties of $S_{\lambda}(p)$ (see
the proof of lemma~{\ref{moi}}), we have that there exists
$\lambda^{*} \in [0,\lambda_{1}^{div}[$ such that for all
$\lambda\in ]\lambda^{*},\lambda_{1}^{div}[$, the problem
(\ref{eqhy1}) has a solution. Note that we have no other information
on $\lambda^{*}$.\\Therefore, throughout the rest of this proof, we
assume that the domain $\Omega$ is starshaped about $a$.\\We need
two Lemmas. Let us start by the following
\begin{lemma}~\\
Assume $0<k\leq2$. Then there exists a constant
$\tilde{\beta_{k}}=\beta_{k}\min[(\diam \Omega)^{k-2},1]$ such that
\ba S_{\lambda}(p)=p_{_{0}} S \textrm{\, for every\, $\lambda \in
]-\infty, \tilde{\beta}_{k} \frac{n^{2}}{4}]$} \label{eqhy17} \ea
and the infimum of $S_{\lambda}(p)$ is not achieved for every
$\lambda \in ]-\infty, \tilde{\beta}_{k} \frac{n^{2}}{4}[$.
\label{lmk}
\end{lemma}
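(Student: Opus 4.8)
The plan is to prove the two inequalities $S_{\lambda}(p)\le p_{_{0}}S$ and $S_{\lambda}(p)\ge p_{_{0}}S$ separately, and then to rule out a minimizer by keeping track of strictness. The decisive preliminary step is to upgrade the differential condition (\ref{eqhy00}) into the pointwise bound $p(x)-p_{_{0}}\ge\beta_{k}|x-a|^{k}$ for a.e. $x\in\Omega$. Indeed, since $a\in p^{-1}(\{p_{_{0}}\})$ we have $p(a)=p_{_{0}}$, and writing $p(x)-p(a)=\int_{0}^{1}\nabla p(a+t(x-a))\cdot(x-a)\,dt$ and applying (\ref{eqhy00}) at the point $y=a+t(x-a)$, where $|y-a|=t|x-a|$, gives the integrand estimate $\nabla p(a+t(x-a))\cdot(x-a)\ge k\beta_{k}t^{k-1}|x-a|^{k}$; integration in $t\in[0,1]$ then produces the claimed bound.

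For the lower bound I would fix $u\in H_{0}^{1}(\Omega)\setminus\{0\}$ and split $\int_{\Omega}p|\nabla u|^{2}dx=p_{_{0}}\int_{\Omega}|\nabla u|^{2}dx+\int_{\Omega}(p-p_{_{0}})|\nabla u|^{2}dx$. The definition of $S$ bounds the first term below by $p_{_{0}}S\|u\|_{q}^{2}$. For the second, combining the preliminary estimate with $|x-a|^{k}\ge\min[(\diam \Omega)^{k-2},1]\,|x-a|^{2}$ (valid for $0<k\le2$ since $|x-a|\le\diam \Omega$) upgrades it to $p(x)-p_{_{0}}\ge\tilde{\beta_{k}}|x-a|^{2}$ a.e., whence $\int_{\Omega}(p-p_{_{0}})|\nabla u|^{2}dx\ge\tilde{\beta_{k}}\int_{\Omega}|x-a|^{2}|\nabla u|^{2}dx$; Hardy's inequality (Lemma~\ref{lmhy1}) with $t=0$ centred at $a$, i.e. $\int_{\Omega}|u|^{2}dx\le\frac{4}{n^{2}}\int_{\Omega}|x-a|^{2}|\nabla u|^{2}dx$, then gives $\int_{\Omega}(p-p_{_{0}})|\nabla u|^{2}dx\ge\tilde{\beta_{k}}\frac{n^{2}}{4}\int_{\Omega}|u|^{2}dx$. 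Collecting the pieces, $\int_{\Omega}p|\nabla u|^{2}dx-\lambda\int_{\Omega}|u|^{2}dx\ge p_{_{0}}S\|u\|_{q}^{2}+(\tilde{\beta_{k}}\frac{n^{2}}{4}-\lambda)\int_{\Omega}|u|^{2}dx$, so $Q_{\lambda}(u)\ge p_{_{0}}S$ whenever $\lambda\le\tilde{\beta_{k}}\frac{n^{2}}{4}$, and taking the infimum yields $S_{\lambda}(p)\ge p_{_{0}}S$.

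For the matching upper bound I would test with the concentrating family $u_{a,\varepsilon}$: by (\ref{eqhy15}) one has $\int_{\Omega}p|\nabla u_{a,\varepsilon}|^{2}dx=p_{_{0}}K_{1}\varepsilon^{-\frac{n-2}{2}}+o(\varepsilon^{-\frac{n-2}{2}})$, by (\ref{eqhy3}) $\|u_{a,\varepsilon}\|_{q}^{2}=K_{2}\varepsilon^{-\frac{n-2}{2}}+O(\varepsilon)$, while (\ref{eqhy4}) shows $\|u_{a,\varepsilon}\|_{2}^{2}/\|u_{a,\varepsilon}\|_{q}^{2}\to0$ (the case $n=3$ being analogous). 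Hence $Q_{\lambda}(u_{a,\varepsilon})\to p_{_{0}}K_{1}/K_{2}=p_{_{0}}S$ for every fixed $\lambda$, so $S_{\lambda}(p)\le p_{_{0}}S$; combined with the previous step this gives $S_{\lambda}(p)=p_{_{0}}S$ on $]-\infty,\tilde{\beta_{k}}\frac{n^{2}}{4}]$.

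Finally, for the non-attainment when $\lambda<\tilde{\beta_{k}}\frac{n^{2}}{4}$, suppose some $u_{0}\ne0$ realised the infimum, so that $Q_{\lambda}(u_{0})=p_{_{0}}S$. The lower-bound chain applied to $u_{0}$ gives $Q_{\lambda}(u_{0})\ge p_{_{0}}S+(\tilde{\beta_{k}}\frac{n^{2}}{4}-\lambda)\|u_{0}\|_{2}^{2}/\|u_{0}\|_{q}^{2}$, and since $\tilde{\beta_{k}}\frac{n^{2}}{4}-\lambda>0$ and $\|u_{0}\|_{2}^{2}>0$ this forces $Q_{\lambda}(u_{0})>p_{_{0}}S$, a contradiction. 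I expect the main difficulty to be the calibration of constants in the lower bound: one must peel off the Sobolev part with its sharp constant $p_{_{0}}S$ and then reduce the weight $|x-a|^{k}$ to $|x-a|^{2}$ before invoking Hardy at $t=0$; using Hardy directly at the weight $|x-a|^{k-2}$ would instead yield the constant $\left(\frac{n+k-2}{2}\right)^{2}$, strictly smaller than $\frac{n^{2}}{4}$ when $k<2$, and would miss the stated threshold.
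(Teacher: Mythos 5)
Your proposal is correct, but the route to the lower bound $S_{\lambda}(p)\ge p_{_{0}}S$ is genuinely different from the paper's, and more direct. The paper argues indirectly: it first establishes the weaker identity (\ref{eqhy18}), namely $S_{\lambda}(p)=p_{_{0}}S$ for $\lambda\le\frac{k}{2}\beta_{k}\left(\frac{n+k-2}{2}\right)^{2}(\diam\Omega)^{k-2}$, by observing that $S_{\lambda}(p)<p_{_{0}}S$ would, via Lemma~\ref{lmhy3}, produce a minimizer and hence a solution of (\ref{eqhy1}), contradicting the Pohozaev non-existence result (Proposition~\ref{poho} combined with Proposition~\ref{pr1}, part 2.b) --- this is where the standing assumption that $\Omega$ is starshaped about $a$ enters. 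It then bootstraps to the threshold $\tilde{\beta}_{k}\frac{n^{2}}{4}$ through the modified weights $\tilde{p}$ and $\tilde{\tilde{p}}=p_{_{0}}+\frac{1}{2}(\tilde{p}-p_{_{0}})$, applying Hardy to one half of the excess weight and the already-established (\ref{eqhy18}) (for the $k=2$ weight $\tilde{\tilde{p}}$) to the other half; non-attainment is finally obtained by comparing $S_{\delta}(p)$ with $S_{\lambda}(p)$ for $\lambda<\delta\le\tilde{\beta}_{k}\frac{n^{2}}{4}$. You instead split $p=p_{_{0}}+(p-p_{_{0}})$ once, estimate the base term by the sharp Sobolev inequality and the \emph{full} excess term by Hardy with $t=0$, and obtain the quantitative inequality $Q_{\lambda}(u)\ge p_{_{0}}S+\left(\tilde{\beta}_{k}\frac{n^{2}}{4}-\lambda\right)\|u\|_{2}^{2}/\|u\|_{q}^{2}$, from which the identity and the non-attainment both follow at once. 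This buys a purely variational proof with no recourse to Lemma~\ref{lmhy3}, to the Pohozaev identity, or to the weight surgery (whose application of (\ref{eqhy18}) to $\tilde{\tilde{p}}$ requires re-verifying hypotheses that the paper glosses over), together with an explicit remainder term; what the paper's longer detour produces is the intermediate statement (\ref{eqhy18}) tied to its non-existence theory. Two caveats, both shared with the paper rather than specific to you: your ray-integration derivation of $p(x)-p_{_{0}}\ge\beta_{k}|x-a|^{k}$ (the paper's inequality (\ref{p}), asserted there without proof) requires the segment from $a$ to $x$ to stay inside $\Omega$ --- i.e.\ precisely the starshapedness about $a$ assumed throughout this part of the paper --- and some care because $p$ is only $H^{1}(\Omega)\cap C(\bar{\Omega})$; and the upper bound for $n=3$, $0<k<2$ is not literally contained in (\ref{eqhy16}), though the instanton computation does go through analogously, as you indicate.
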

\begin{proof} We know from~(\ref{eqhy16}) that
$$
S_{\lambda}(p)\leq Q_{\lambda}(u_{a,\varepsilon})\leq p_{_{0}} S+
A_{_{k}}
 \varepsilon^{\frac{k}{2}} + o(\varepsilon^{\frac{k}{2}})\textrm{\quad with $A_{_{k}}$ is a positive constant},
$$
thus
$$
S_{\lambda}(p)\leq p_{_{0}} S.
$$
On the other hand, we know from Lemma~\ref{poho} and
Proposition~\ref{pr1}, that for $0<k\leq2$, for every $\lambda\leq
\frac{k}{2}\beta_{k}(\frac{n+k-2}{2})^{2}\left(\diam
\Omega\right)^{k-2}$, problem (\ref{eqhy1}) has no solution. So we
exclude the case $S_{\lambda}(p)<p_{_{0}}S$, otherwise,
Lemma~\ref{lmhy3} will yield in a contradiction.\\We conclude that
for $0<k\leq 2$, we have \ba S_{\lambda}(p)=p_{_{0}}S \textrm{\quad
for every\quad $\lambda \leq
\frac{k}{2}\beta_{k}(\frac{n+k-2}{2})^{2}$}\left( \diam
\Omega\right)^{k-2}. \label{eqhy18} \ea Now, we consider $\tilde{p}
$ defined by \ba\left\{
\begin{array}{lll}
\tilde{p}(x)&=p(x) \qquad &\forall x\in \Omega\setminus B(a,r),\\
\tilde{p}(x)&=p_{_{0}}+\beta_{k}|x-a|^{2}\qquad&\forall x\in
B(a,\frac{r}{2}),\\p(x)&\ge\tilde{p}(x)\qquad &\forall x\in
B(a,r)\setminus B(a,\frac{r}{2}),\label{p'}
\end{array}
\right. \ea where $r<1$ is a positive constant.\\Since $0<k\leq2$,
we have $|x-a|^{k}\ge|x-a|^{2}$ for every $x\in B(a,r)$ and $p(x)\ge
\tilde{p}(x)$ in $\Omega$.\\Let $u\in H_{0}^{1}(\Omega)$ with
$\|u\|_{q}=1$, then
\begin{equation*}
\int_{\Omega}p(x)|\nabla u(x)|^{2}dx-\lambda
\int_{\Omega}|u(x)|^{2}dx\geq\int_{\Omega}\tilde{p}(x)|\nabla
u(x)|^{2}dx-\lambda \int_{\Omega}|u(x)|^{2}dx,\end{equation*} thus,
\begin{equation}
\begin{array}{lll}
\displaystyle\int_{\Omega}\hspace{-1mm}p(x)|\nabla
u(x)|^{2}dx-\lambda
\int_{\Omega}|u(x)|^{2}dx\hspace{-2mm}&\geq\hspace{-1mm}\displaystyle\int_{\Omega}
(p_{_{0}}+\frac{1}{2}(\tilde{p}(x)-p_{_{0}}))|\nabla
u(x)|^{2}dx\\[\medskipamount]
\hspace{-2mm}&-\lambda\displaystyle\int_{\Omega}\hspace{-1mm}|u(x)|^{2}dx+\frac{1}{2}\int_{\Omega}\hspace{-1mm}(\tilde{p}(x)-p_{_{0}})|\nabla
u(x)|^{2}dx.
\end{array}
\label{eqk}
\end{equation}
Set
$\tilde{\tilde{p}}(x)=p_{_{0}}+\frac{1}{2}(\tilde{p}(x)-p_{_{0}})$.\\
From (\ref{eqhy00}) we deduce that \begin{equation}
p(x)-p_{_{0}}\geq \beta_{k}|x-a|^{k}\textrm{\, a.e in
$\Omega$}.\label{p}\end{equation} Using (\ref{p'}) and (\ref{p}), a
simple computation gives
$\tilde{p}(x)-p_{_{0}}\ge\tilde{\beta}_{k}|x-a|^{2}$\, a.e in
$\Omega$, with $\tilde{\beta_{k}}=\beta_{k}\min[(\diam
\Omega)^{k-2},1]$.\\Applying Lemma~{\ref{lmhy1}}, we find
$$
\int_{\Omega}(\tilde{p}(x)-p_{_{0}})|\nabla
u(x)|^{2}dx\ge\tilde{\beta}_{k}\frac{n^{2}}{4}\int_{\Omega}|u(x)|^{2}dx.
$$
Inequality (\ref{eqk}) becomes for every $u\in H_{0}^{1}(\Omega)$,
\basn \int_{\Omega}p(x)|\nabla u|^{2}dx-\lambda
\int_{\Omega}|u|^{2}dx\geq \int_{\Omega}\tilde{\tilde{p}}(x)|\nabla
u|^{2}dx-\left(\lambda-\tilde{\beta}_{k}\frac{n^{2}}{8}\right)
\int_{\Omega}|u|^{2}dx. \easn Thus, we find \basn S_{\lambda}(p)\geq
\inf_{\|u\|_{q}^{2}=1}\left[\int_{\Omega}
\tilde{\tilde{p}}(x)|\nabla
u|^{2}dx-(\lambda-\tilde{\beta}_{k}\frac{n^{2}}{8})
\int_{\Omega}|u|^{2}dx\right]. \easn On the other hand
$\lambda-\tilde{\beta}_{k}\frac{n^{2}}{8}\leq\frac{1}{2}\tilde{\beta}_{k}\frac{n^{2}}{4}$
since $\lambda\leq \tilde{\beta}_{k}\frac{n^{2}}{4}$, so by
(\ref{eqhy18}), we conclude that
$$
\inf_{\|u\|_{q}=1}\left[\int_{\Omega} \tilde{\tilde{p}}(x)|\nabla
u|^{2}dx-(\lambda-\tilde{\beta}_{k}\frac{n^{2}}{8})
\int_{\Omega}|u|^{2}dx\right]=p_{_{0}}S,
$$
hence, (\ref{eqhy17}) follows.\\
Now, we are able to prove that the infimum in (\ref{eqhy17}) is not
achieved. Suppose by contradiction that it is achieved by some
$u_{0}$. Let $\delta$ such that
$\tilde{\beta}_{k}\frac{n^{2}}{4}\ge\delta > \lambda$. Using $u_{0}$
as a test function for $S_{\delta}$, we obtain \basn
\begin{split}
 S_{\delta}(p) \leq\frac{\int_{\Omega}p(x)|\nabla u_{0}|^{2}dx-\delta
\int_{\Omega}|u_{0}|^{2}dx}{\|u_{0}\|_{q}^{2}}
<\frac{\int_{\Omega}p(x)|\nabla u_{0}|^{2}dx-\lambda
\int_{\Omega}|u_{0}|^{2}dx}{\|u_{0}\|_{q}^{2}}
\end{split}
 \easn
and thus $S_{\delta}(p)<S_{\lambda}(p)=p_{_{0}}S$. This is a
contradiction since $S_{\delta}(p)=p_{_{0}}S$ for $\delta \leq
\tilde{\beta}_{k}\frac{n^{2}}{4}$.
\end{proof}
The second Lemma on which the proof of assertion 3) in
Theorem~{\ref{thhy2}} is based is the following
\begin{lemma}~\\
There exists $\lambda^{*}\in [\tilde{\beta}_{k}\frac{n^{2}}{4},
\lambda_{1}^{\textsl{div}}[$, such that for all
$\lambda\in]\lambda^{*},\lambda_{1}^{div}[$ we
have$$S_{\lambda}(p)<p_{_{0}}S.$$\label{moi}
\end{lemma}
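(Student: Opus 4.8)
The plan is to combine the monotonicity of $\lambda\mapsto S_{\lambda}(p)$ with Lemma~\ref{lmk} and a test performed with the first eigenfunction of $-\textsl{div}(p(x)\nabla\cdot)$, and then to read off $\lambda^{*}$ as the threshold separating the two regimes. First I would record that $\lambda\mapsto S_{\lambda}(p)$ is nonincreasing: for a fixed $u\in H_{0}^{1}(\Omega)\setminus\{0\}$ the numerator of $Q_{\lambda}(u)$ decreases with $\lambda$ while $\|u\|_{q}^{2}>0$, so $\lambda'<\lambda''$ gives $Q_{\lambda''}(u)\le Q_{\lambda'}(u)$, and passing to the infimum over $u$ yields $S_{\lambda''}(p)\le S_{\lambda'}(p)$. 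Since Lemma~\ref{lmk} gives $S_{\lambda}(p)=p_{_{0}}S$ for every $\lambda\le\tilde{\beta}_{k}\frac{n^{2}}{4}$, monotonicity promotes this to the global bound $S_{\lambda}(p)\le p_{_{0}}S$ for all $\lambda\in\R$.

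Next I would exhibit one value $\lambda_{0}<\lambda_{1}^{\textsl{div}}$ at which the strict inequality already holds. Let $\phi_{1}>0$ be a first eigenfunction, so that $\phi_{1}\in H_{0}^{1}(\Omega)\subset L^{q}(\Omega)$ and $\int_{\Omega}p(x)|\nabla\phi_{1}|^{2}dx=\lambda_{1}^{\textsl{div}}\int_{\Omega}|\phi_{1}|^{2}dx$. Testing $Q_{\lambda}$ against $\phi_{1}$ gives
\[
S_{\lambda}(p)\le Q_{\lambda}(\phi_{1})=\frac{(\lambda_{1}^{\textsl{div}}-\lambda)\int_{\Omega}|\phi_{1}|^{2}\,dx}{\|\phi_{1}\|_{q}^{2}},
\]
whose right-hand side is positive for $\lambda<\lambda_{1}^{\textsl{div}}$ and tends to $0$ as $\lambda\uparrow\lambda_{1}^{\textsl{div}}$; since $p_{_{0}}S>0$ there is therefore $\lambda_{0}<\lambda_{1}^{\textsl{div}}$ with $S_{\lambda_{0}}(p)<p_{_{0}}S$. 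Comparing with Lemma~\ref{lmk}, which forces $S_{\lambda_{0}}(p)=p_{_{0}}S$ whenever $\lambda_{0}\le\tilde{\beta}_{k}\frac{n^{2}}{4}$, this can happen only if $\tilde{\beta}_{k}\frac{n^{2}}{4}<\lambda_{0}<\lambda_{1}^{\textsl{div}}$; in particular the interval appearing in the statement is automatically nonempty.

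Finally I would set $\lambda^{*}=\sup\{\mu\in\R:\,S_{\mu}(p)=p_{_{0}}S\}$. By Lemma~\ref{lmk} this set contains $]-\infty,\tilde{\beta}_{k}\frac{n^{2}}{4}]$, while by monotonicity together with $S_{\lambda_{0}}(p)<p_{_{0}}S$ it is contained in $]-\infty,\lambda_{0}[$; hence $\lambda^{*}$ is well defined and $\tilde{\beta}_{k}\frac{n^{2}}{4}\le\lambda^{*}\le\lambda_{0}<\lambda_{1}^{\textsl{div}}$, i.e. $\lambda^{*}\in[\tilde{\beta}_{k}\frac{n^{2}}{4},\lambda_{1}^{\textsl{div}}[$. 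For every $\lambda\in\,]\lambda^{*},\lambda_{1}^{\textsl{div}}[$ the definition of $\lambda^{*}$ yields $S_{\lambda}(p)\ne p_{_{0}}S$, and combined with the global bound $S_{\lambda}(p)\le p_{_{0}}S$ this forces $S_{\lambda}(p)<p_{_{0}}S$, which is the desired conclusion.

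The step I expect to require the most care is tying the two one-sided facts into a single threshold with the correct location: the equality $S_{\lambda}(p)=p_{_{0}}S$ below $\tilde{\beta}_{k}\frac{n^{2}}{4}$ comes from Lemma~\ref{lmk}, whereas the strict drop near $\lambda_{1}^{\textsl{div}}$ rests entirely on the eigenfunction computation, and it is precisely the monotonicity of $S_{\lambda}(p)$ that guarantees these regimes do not overlap and that the supremum defining $\lambda^{*}$ lands in $[\tilde{\beta}_{k}\frac{n^{2}}{4},\lambda_{1}^{\textsl{div}}[$.
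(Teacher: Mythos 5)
Your proof is correct, and it rests on the same two pillars as the paper's: testing $Q_{\lambda}$ with the first eigenfunction $\varphi_{1}$ of $-\textsl{div}(p\nabla\cdot)$ and propagating by the monotonicity of $\lambda\mapsto S_{\lambda}(p)$, with Lemma~\ref{lmk} anchoring the left endpoint. The difference is in how the right endpoint is treated, and it is a real one. The paper evaluates the eigenfunction test only at $\lambda=\lambda_{1}^{\textsl{div}}$ itself, obtaining $S_{\lambda_{1}^{\textsl{div}}}(p)=0$, and must then invoke the continuity of $\lambda\mapsto S_{\lambda}(p)$ (asserted there without proof) together with the intermediate value theorem to produce $\beta\in\,]\tilde{\beta}_{k}\frac{n^{2}}{4},\lambda_{1}^{\textsl{div}}[$ with $0<S_{\beta}(p)<p_{_{0}}S$, after which monotonicity concludes. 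You instead run the eigenfunction test at every $\lambda<\lambda_{1}^{\textsl{div}}$, which gives the explicit linear bound $S_{\lambda}(p)\le(\lambda_{1}^{\textsl{div}}-\lambda)\,\|\varphi_{1}\|_{2}^{2}/\|\varphi_{1}\|_{q}^{2}$; this makes the decay quantitative and removes any appeal to continuity, so your argument is more self-contained precisely at the point where the paper leaves a claim unjustified. Your definition $\lambda^{*}=\sup\{\mu:\,S_{\mu}(p)=p_{_{0}}S\}$ also pins down the sharp threshold, whereas the paper simply takes $\lambda^{*}$ to be the $\beta$ furnished by the intermediate value argument; both choices satisfy the statement. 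Finally, both proofs implicitly force $\tilde{\beta}_{k}\frac{n^{2}}{4}<\lambda_{1}^{\textsl{div}}$ under the standing hypotheses (starshapedness and condition (\ref{eqhy00}), which Lemma~\ref{lmk} uses), a fact that your version makes explicit.
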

\begin{proof}~\\The proof is based on a study of some properties of the function
$\lambda\mapsto S_{\lambda}(p)$. We have
$S_{\lambda_{1}^{\textsl{div}}}(p)=0$. Indeed let $\varphi_{1}$ be
the eigenfunction of $\textsl{div}(p\nabla .)$ corresponding to
$\lambda_{1}^{div}$, we have \basn
\begin{split}
S_{\lambda_{1}^{\textsl{div}}}\leq \frac{\int p(x)|\nabla
\varphi_{1}|^{2}dx-\lambda_{1}^{div}\int |\varphi_{1}|^{2}dx}{(\int
|\varphi_{1}|^{q}dx)^{\frac{2}{q}}}=0.
\end{split}
\easn Moreover, $\lambda\mapsto S_{\lambda}(p)$ is continuous and
$S_{\tilde{\beta}_{k}\frac{n^{2}}{4}}(p)=p_{_{0}}S $. Then according
to the Mean Value Theorem, there exists
$\beta\in]\tilde{\beta}_{k}\frac{n^{2}}{4},\lambda_{1}^{\textsl{div}}[$
such that $0<S_{\beta}(p)<p_{_{0}}S$. But the function
$\lambda\mapsto S_{\lambda}(p)$ is decreasing hence $\forall
\lambda\in [\beta, \lambda_{1}^{\textsl{div}}[$ we have
$S_{\lambda}(p)<p_{_{0}}S$, and the Lemma follows at once.
\end{proof}
Now we have all the necessary ingredients for the proof of
Theorem~{\ref{thhy2}}.\\{\bf Proof of Theorem~{\ref{thhy2}}
concluded}: Concerning the proof of 1), 2), 3) and 4), let $u\in
H_{0}^{1}(\Omega)$ be given by Lemma~\ref{lmhy3}, that is,
$$\|u\|_{q}=1\,\,\, \textrm{and\,\, $\int_{\Omega}p(x)|\nabla
u(x)|^{2}dx-\lambda \int_{\Omega}|u(x)|^{2}dx=S_{\lambda}(p)$.}
$$
We may as well assume that $u\geq0$. Since $u$ is a minimizer for
(\ref{eqhy8}) there exists a Lagrange multiplier $\mu\in \R$ such
that
$$
-div(p\nabla u)-\lambda u=\mu u^{q-1}  \textrm{ on $\Omega$.}
$$
In fact, $\mu=S_{\lambda}(p)$, and $S_{\lambda}(p)>0$ since
$\lambda<\lambda_{1}^{\textsl{div}}$. It follows that $\gamma u$
satisfies (\ref{eqhy1}) for some appropriate constant $\gamma>0$
($\gamma=(S_{\lambda}(p))^{\frac{1}{q-2}}$), note that $u>0$ on
$\Omega$ by the strong maximum principle. \\
Now we prove the assertion 5) of Theorem~\ref{thhy2}. From
(\ref{eqhy16}) and since $\lambda \leq0$ we have
$$
p_{_{0}}S\leq S_{\lambda}(p)\leq Q_{\lambda}(u_{a, \varepsilon})\leq
p_{_{0}}S+o(1).$$ Hence $S_{\lambda}(p) =p_{_{0}}S$ and the infimum
is not achieved, indeed we suppose that $S_{\lambda}(p)$ is achieved
by some function $u\in H_{0}^{1}(\Omega)$, in that case
$$
S_{\lambda}(p)=\int_{\Omega}p(x)|\nabla u(x)|^{2}dx-\lambda
\int_{\Omega}|u(x)|^{2}dx,  \textrm{    with $\|u\|_{q}=1$.}
$$
Using the fact that $S$ is not attained and since $\lambda\leq 0$,
we deduce
$$
p_{_{0}}S<p_{_{0}}\int_{\Omega}|\nabla u(x)|^{2}dx\leq
S_{\lambda}(p)=p_{_{0}}S,
$$
then we obtain a contradiction.\\Finally we prove assertion 6) in
Theorem~{\ref{thhy2}}. Let $\varphi{_{1}}$ be the eigenfunction
corresponding to $\lambda_{1}^{div}$ with $\varphi{_{1}}>0$ on $
\Omega$. Suppose that $u$ is a solution of~(\ref{eqhy1}). We have
\begin{equation*}
\begin{array}{lll}
\displaystyle-\int_{\Omega} \textsl{div}(p(x)\nabla u(x))
\varphi{_{1}}(x)dx&=\displaystyle\lambda^{\textsl{div}}_{1}
\int_{\Omega} u(x)
\varphi_{1}(x) dx\\[\bigskipamount]
&=\displaystyle\int_{\Omega} u^{q-1}(x) \varphi{_{1}}(x)dx+ \lambda
\int_{\Omega}u(x) \varphi{_{1}}(x)dx,\end{array}\end{equation*} thus
$$\lambda^{\textsl{div}}_{1} \int_{\Omega} u(x) \varphi{_{1}}(x) dx >
\lambda \int_{\Omega}u(x) \varphi_{_{1}}(x)dx$$ and
$$\lambda^{\textsl{div}}_{1}>\lambda.$$This completes the proof of
Theorem~{\ref{thhy2}}.
\section{The effect of the geometry of the domain}
Let $\Omega\subset\R^{n}$, $n\ge 3$, be a bounded domain. We study the equation\\
\ba \left\{
\begin{array}{lll}
-\textsl{div}(p(x)\nabla u)=u^{q-1} &\textrm{in $\Omega$,}\\
 \hspace{23.5mm} u>0 &\textrm{in $\Omega$,}\\
 \hspace{23mm}u=0 &\textrm{on $\partial\Omega$.}
\end{array}
\right. \label{equation26} \ea where $q=\frac{2n}{n-2}$ and $ p:
\bar{\Omega}\longrightarrow \R$ is a positive weight belonging to
$C(\bar{\Omega})\cap H_{0}^{1}(\Omega)$.\\We assume in this section
that $p$ is such that $\nabla p(x).(x-a)\geq {0}$ a.e $x\in\Omega$ and we set $p_{_{0}}=p(a)$.\\
Let us start by the following non-existence result
\begin{lemma}~\\
\label{lmhy5} There is no solution of~(\ref{equation26}) if
\,$\,\Omega$ is a starshaped domain about $a$.
\end{lemma}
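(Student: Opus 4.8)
The key observation is that equation~(\ref{equation26}) is nothing but problem~(\ref{eqhy1}) with $\lambda=0$. Consequently the plan is to recognize this lemma as a special instance of the non-existence result already established in Proposition~\ref{poho}, and to check that its two hypotheses — that $\alpha(p)>-\infty$ and that $\lambda\le\alpha(p)$ — are satisfied in the present setting.

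First I would verify that $\alpha(p)\ge 0$. By the standing assumption of this section we have $\nabla p(x).(x-a)\ge 0$ a.e.\ in $\Omega$, so for every $u\in H_0^1(\Omega)\setminus\{0\}$ the integrand $\nabla p(x).(x-a)|\nabla u(x)|^2$ is nonnegative and hence
\[
\frac{\int_{\Omega}\nabla p(x).(x-a)|\nabla u(x)|^{2}dx}{\int_{\Omega}|u(x)|^{2}dx}\ge 0.
\]
Taking the infimum over such $u$ gives $\alpha(p)\ge 0$, hence in particular $\alpha(p)>-\infty$. Since here $\lambda=0\le\alpha(p)$ and $\Omega$ is starshaped about $a$, the hypotheses of Proposition~\ref{poho} are met, and that proposition immediately yields the non-existence of a solution of~(\ref{equation26}).

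For a self-contained argument one can instead re-run the Pohozaev computation of Proposition~\ref{poho} with $\lambda=0$: a hypothetical solution $u$ would satisfy
\[
\frac{1}{2}\int_{\Omega}\nabla p(x).(x-a)|\nabla u(x)|^{2}dx+\frac{1}{2}\int_{\partial\Omega}p(x)\Big|\frac{\partial u}{\partial\nu}\Big|^{2}(x-a).\nu\, dx=0.
\]
Both terms are nonnegative — the bulk term by the sign condition on $\nabla p(x).(x-a)$, the boundary term because $p>0$ and $(x-a).\nu>0$ on $\partial\Omega$ by starshapedness — so each must vanish, forcing $\partial u/\partial\nu\equiv 0$ on $\partial\Omega$. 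The only genuine point requiring care is that this is incompatible with $u$ being a positive solution: by Hopf's boundary-point lemma $\partial u/\partial\nu<0$ strictly on $\partial\Omega$, making the boundary integral strictly positive, a contradiction. Since Proposition~\ref{poho} already incorporates exactly this mechanism, no additional obstacle arises and the lemma follows at once.
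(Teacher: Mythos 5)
Your proposal is correct in substance and rests on the same mechanism as the paper: the Pohozaev identity of Proposition~\ref{poho} specialized to $\lambda=0$, combined with the sign conditions $\nabla p(x).(x-a)\ge 0$ in $\Omega$ and $(x-a).\nu>0$ on $\partial\Omega$. Two differences from the paper's proof deserve comment. First, the paper does not simply cite Proposition~\ref{poho}; it re-derives the identity and finishes by hand, and there is a reason: with $\lambda=0$ and $\alpha(p)\ge 0$ you may be invoking the proposition exactly at the borderline case $\lambda=\alpha(p)$ (indeed $\alpha(p)=0$ whenever $k>2$, by Proposition~\ref{pr1}), and that borderline is precisely where the proof of Proposition~\ref{poho} is thinnest: the strict inequality asserted there requires the boundary term $\int_{\partial\Omega}p(x)\vert\partial u/\partial\nu\vert^{2}(x-a).\nu\,dx$ to be strictly positive, which is not justified in that proof, and justifying it is essentially the content of the present lemma. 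So your ``reduction'' is formally licensed by the statement of Proposition~\ref{poho}, but it leans on the least-supported part of its proof. Second, your self-contained argument ends with Hopf's boundary-point lemma, whereas the paper ends more elementarily: having deduced $\partial u/\partial\nu\equiv 0$ on $\partial\Omega$, it integrates the equation over $\Omega$ and uses the divergence theorem to get $\int_{\Omega}u^{q-1}dx=-\int_{\Omega}\textsl{div}(p(x)\nabla u)\,dx=-\int_{\partial\Omega}p\,\frac{\partial u}{\partial\nu}\,dx=0$, forcing $u\equiv 0$ and contradicting $u>0$. The paper's ending is preferable in this setting, because Hopf's lemma for the operator $-\textsl{div}(p\nabla\cdot)$ requires regularity that is not available here ($p$ is only assumed to lie in $H^{1}(\Omega)\cap C(\bar{\Omega})$, and a starshaped domain need not satisfy an interior ball condition at boundary points), while the divergence-theorem step needs nothing beyond what the Pohozaev identity already presupposes. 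Your argument is sound once that last step is replaced by the integration argument.
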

\medskip
\begin{proof} This follows from Pohozaev's identity.\\
Suppose that $u$ is a solution of (\ref{equation26}), we have (see
Lemma~{\ref{poho}} Section 2 for $\lambda=0$),
\begin{center}
\ba \int_{\Omega}\nabla p(x) .(x-a)\vert\nabla
u(x)\vert^{2}dx+\int_{\partial\Omega}p(x)[(x-a).\nu]\vert\frac{\partial
u}{\partial \nu}\vert^{2}dx=0. \label{equation27} \ea
\end{center}
Note that $(x-a).\nu>{0}$ a.e
on $\partial\Omega$ since $\Omega$ is starshaped about $a$.\\
Since $\nabla p(x).(x-a)\geq {0}$ a.e $x\in\Omega$, we deduce
from~(\ref{equation27}) that $\frac{\partial u}{\partial \nu}=0$ on
$\partial{\Omega}$, and then by~(\ref{equation26}) we have
$$\int_{\Omega}u^{q-1}(x)dx=-\int_{\Omega}div(p(x)\nabla u(x))dx=\int_{\partial\Omega}\frac{\partial u}{\partial \nu} dx=0 ,$$
thus $$u\equiv 0 .$$
\end{proof}
Suppose that $\Omega$ is starshaped about $a$. In view of
Lemma~\ref{lmhy5}, we will modify the geometry of $\Omega$ in order
to find a solution of problem~\ref{equation26}. For a $\varepsilon
>0 $ small enough, we set
$\Omega_{\varepsilon}=\Omega\setminus \bar{B}(a,\varepsilon)$.\\
We investigate the problem (\ref{equation26}) in the new domain
$\Omega_{\varepsilon}$, and, throughout the rest of this paper, we
shall denote this new problem by $(I_{\varepsilon})$
.\\
Since $p$ is a continuous function, then $\forall$ $\theta>0$,
$\exists\, r_{0}>0$ such that $\forall\, \sigma\in\Sigma$, where
$\Sigma$ designates the unit sphere of $\R^{n}$, we have
$|p(a+r_{0}\sigma)-p_{_{0}}|<\frac{\theta}{2S^{\frac{n}{2}}}$.\\Throughout
the rest of this Section, $\theta>0$ is fixed, small enough, and
$r_{0}>0$ is given as the previous definition.\\We recall the main
result of this section which we have already stated by
theorem~\ref{thhy0} in the introduction
\begin{theorem}~\\
\label{thhy3}There exists
$\varepsilon_{0}=\varepsilon_{0}(\Omega,p)\leq r_{0}$ such that for
every $0<\varepsilon<\varepsilon_{0}$, the problem
$(I_{\varepsilon})$ has at least one solution in
$H^{1}_{0}(\Omega_{\varepsilon})$.
\end{theorem}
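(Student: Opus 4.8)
The plan is to treat $(I_\varepsilon)$ variationally and to extract a positive solution by a min--max procedure that detects the nontrivial topology produced by the hole $\bar B(a,\varepsilon)$. On $H^1_0(\Omega_\varepsilon)$ introduce
$$
J_\varepsilon(u)=\frac12\int_{\Omega_\varepsilon}p(x)|\nabla u|^2\,dx-\frac1q\int_{\Omega_\varepsilon}(u^+)^q\,dx,
$$
whose nonzero critical points are, after the strong maximum principle, exactly the solutions of $(I_\varepsilon)$. Since $\lambda=0$, the mechanism $S_\lambda(p)<p_0S$ exploited in Section~3 is unavailable: here the ground level equals $p_0S$, it is not attained, and by Lemma~\ref{lmhy5} there is no solution on the starshaped set before the hole is removed. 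The idea is therefore to go \emph{above} the first bubbling level and to let the non-contractibility of $\Omega_\varepsilon$ trap a critical value in the range where compactness is recovered.

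First I would pin down the compactness threshold. By a concentration--compactness (Struwe-type) decomposition adapted to the weight, any Palais--Smale sequence for $J_\varepsilon$ at level $c$ writes as a solution $u_0$ of $(I_\varepsilon)$ plus finitely many bubbles, each concentrating at a point $y$ with $p(y)\ge p_0=\min_{\overline\Omega}p$ and carrying energy $\tfrac1n(p(y)S)^{n/2}\ge\tfrac1n(p_0S)^{n/2}$. A short computation (the Nehari/Pohozaev identity) shows that a nonzero nonnegative solution already satisfies $J_\varepsilon\ge\tfrac1n(p_0S)^{n/2}$. Hence, if
$$
\tfrac1n(p_0S)^{n/2}<c<\tfrac2n(p_0S)^{n/2},
$$
two bubbles are too expensive and the \emph{only} way $(PS)_c$ can fail is $u_0=0$ together with a single escaping bubble; in every other case $u_j\to u_0\ne0$ and we are done.

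Next I would build the min--max. Using the truncated extremal functions $u_{y,\mu}$ built from $U_{y,\mu}$ as in~(\ref{eqhy-1}), centered at $y=a+r_0\sigma$ for $\sigma\in\Sigma$, I obtain a map of the sphere surrounding the hole into the Nehari manifold which, through the barycenter (concentration) map, represents a nontrivial homotopy class. Defining
$$
c_\varepsilon=\inf_{g\in\Gamma}\ \max_{\sigma\in\Sigma}J_\varepsilon\big(g(\sigma)\big)
$$
over the class $\Gamma$ of admissible extensions of this boundary family, a variant of the Ambrosetti--Rabinowitz theorem~\cite{ar} presents $c_\varepsilon$ as a candidate critical value. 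The decisive energy estimate --- where the smallness of $\varepsilon$ and the continuity bound $|p(a+r_0\sigma)-p_0|<\theta/(2S^{n/2})$ enter --- is the double inequality $\tfrac1n(p_0S)^{n/2}<c_\varepsilon<\tfrac2n(p_0S)^{n/2}$: the lower bound comes from the linking of the bubble sphere with the one-bubble stratum, and the upper bound from a Brezis--Nirenberg type expansion of $J_\varepsilon(u_{y,\mu})$ in which the contribution of $p$ near $a$ is made arbitrarily small by choosing $\theta$, hence $r_0$ and then $\varepsilon_0\le r_0$, small.

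The main obstacle is precisely to exclude the single escaping bubble, the one failure mode left open by the energy window. Here the topology does the work: if every almost-optimal configuration for $c_\varepsilon$ concentrated at a single point, its barycenter would contract the nontrivial class carried by $\Gamma$ to a point of $\overline{\Omega_\varepsilon}$, contradicting the non-triviality of the bubble sphere around the hole. One must also forbid concentration on the boundary: concentration on the outer boundary $\partial\Omega$ does not occur for the Dirichlet problem, while concentration on the hole boundary $\partial B(a,\varepsilon)$ is controlled by the Pohozaev identity of Lemma~\ref{lmhy5} together with the hypothesis $\nabla p(x)\cdot(x-a)\ge0$ --- precisely the ingredients that force non-existence on the full starshaped domain. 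With the single bubble excluded, $(PS)_{c_\varepsilon}$ is restored, the min--max yields a nonzero critical point $u\ge0$ of $J_\varepsilon$, and elliptic regularity together with the strong maximum principle make it a positive solution of $(I_\varepsilon)$ for every $0<\varepsilon<\varepsilon_0$.
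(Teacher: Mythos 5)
Your outline reproduces the paper's own strategy---truncated bubbles centered at $a+r_0\sigma$ as a boundary family around the hole, a min--max class of extensions, the energy window $]\frac1n(p_0S)^{n/2},\frac2n(p_0S)^{n/2}[$, a weighted barycenter map, and the Ambrosetti--Rabinowitz variant (Theorem A 1) combined with compactness in the window (Theorem A 2)---but it omits the one step that carries all the difficulty. The strict lower bound on the min--max level is asserted (``comes from the linking of the bubble sphere with the one-bubble stratum''), not proved. Linking in the usual sense only gives $c_\varepsilon\ge\inf\{E(u):\,u\neq0,\ \int_{\Omega_\varepsilon}p|\nabla u|^2dx=\int_{\Omega_\varepsilon}|u|^qdx\}$, and this Nehari infimum is (up to a quantity vanishing as $\varepsilon\to0$) exactly the one-bubble level $\frac1n(p_0S)^{n/2}$, which is not attained. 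So the naive bound gives no strict separation: in particular it does not dominate the boundary maximum $\max_{K^*}E(f^*)\le\frac1n(p_0S)^{n/2}+\eta$, and without the strict inequality $\max_{K}E(f)>\max_{K^*}E(f)$ for \emph{every} admissible $f$, Theorem A 1 cannot even be invoked to produce a Palais--Smale sequence. The paper's entire technical core exists to close precisely this gap: Lemma~\ref{lmhy6} (via concentration--compactness, any nonzero $u$ on the Nehari set with $E(u)\le\frac1n(p_0S)^{n/2}+2\eta$ has barycenter $F(u)$ in a neighborhood $V$ of $\bar\Omega_\varepsilon$ avoiding $a$), and Lemma~\ref{lm8}, proved by a degree computation for $G(s,\xi)=(s,F(f(s,\xi)))$: homotopy to the identity gives $\deg(G,K,(\lambda^{-1},a))=1$, while the decomposition of $K$ along the sign of the Nehari functional together with paths to $(\pm R,a)$ forces the degrees on $K^{+}$ and $K^{-}$ to vanish, yielding the quantitative bound $\sup_KE(f)\ge\frac1n(p_0S)^{n/2}+2\eta$ for all $f\in\mathcal{P}$. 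Your proposal contains no substitute for this argument; ``the barycenter would contract the nontrivial class'' is the heuristic behind it, not a proof.

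On compactness your reading is actually sharper than the paper's in one respect: you note that a single bubble at a point $y$ with $p_0<p(y)<2^{2/n}p_0$ is a Palais--Smale sequence whose level lies inside the window, a possibility that Theorem A 2 (imported from the constant-coefficient results of Brezis and Struwe) does not address for nonconstant $p$. But your proposed cure is circular as stated: converting ``all almost-optimal configurations concentrate at one point'' into a contraction of the min--max class requires a deformation below the level $c_\varepsilon$, and the deformation lemma is exactly what is unavailable when Palais--Smale fails; making this rigorous requires either the degree argument above, run at the level of the class (which is what Lemma~\ref{lm8} does), or a full Bahri--Coron analysis of critical points at infinity. Finally, your claim that concentration on $\partial B(a,\varepsilon)$ is ``controlled by the Pohozaev identity of Lemma~\ref{lmhy5} together with $\nabla p\cdot(x-a)\ge0$'' is incorrect: $\Omega_\varepsilon$ is not starshaped about $a$---that is the whole point of removing the ball---so no such identity is available there; in the Struwe decomposition, boundary concentration is excluded because the limit problem in a half-space with Dirichlet condition has no nontrivial solution.
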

In order to prove the Theorem~\ref{thhy3}, we need to apply the
following result, see \cite{ar},
\begin{thoa}~\\
Let $E$ be a $C^{1}$ function defined on a Banach
space $X$, and let $K$ a compact metric space. We denote by $K^{*}$ a nonempty subset of $K$, closed, different from $ K$ and we fix $f^{*}\in C(K^{*},X)$.\\
We define $\mathcal{P} =\{f\in C(K,X) / f=f^{*} $on $ K^{*}\}$ and $
c=\inf_{f\in\mathcal{P}}\sup_{t\in K} E(f(t))
$\\
Suppose that for every $f$ of $\mathcal{P}$, we have
$$
\max_{t\in K} E(f(t))>\max_{t\in K^{*}}E(f(t)),
$$
then there exists a sequence  $(u_{j})\subset X$ such that
$E(u_{j})\longrightarrow c$ and $E'(u_{j})\longrightarrow$ 0 in
$X^{*}$. 
\end{thoa}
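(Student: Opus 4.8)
The plan is to derive the conclusion from Ekeland's variational principle applied to the functional $f\mapsto\Phi(f):=\max_{t\in K}E(f(t))$ over the admissible class $\mathcal{P}$, rather than to invoke any Palais--Smale compactness (none is assumed, and the conclusion is only the \emph{existence} of a Palais--Smale sequence at level $c$). First I would verify that $(\mathcal{P},d)$, with $d(f,g)=\max_{t\in K}\|f(t)-g(t)\|_{X}$, is a nonempty complete metric space: it is nonempty because $f^{*}$, continuous on the closed subset $K^{*}$ of the (metric, hence normal) space $K$ and valued in the Banach space $X$, extends continuously to $K$ by a Tietze/Dugundji-type argument; and it is closed in the complete space $C(K,X)$ because the constraint $f|_{K^{*}}=f^{*}$ survives uniform limits. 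Since $K$ is compact and $E\circ f$ continuous, the maximum defining $\Phi(f)$ is attained, $\Phi$ is continuous on $(\mathcal{P},d)$, and by the hypothesis $\Phi(f)>\max_{t\in K^{*}}E(f^{*}(t))=:a$ for every $f$, so $\Phi$ is bounded below and $c=\inf_{\mathcal{P}}\Phi\in[a,+\infty)$ is finite.

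The core of the argument is then to pass from an approximate minimizer of $\Phi$ to a single point with small differential. For each $n$ I would pick $f_{n}\in\mathcal{P}$ with $\Phi(f_{n})\le c+\tfrac1n$ and apply Ekeland's principle to obtain $g_{n}\in\mathcal{P}$ with $\Phi(g_{n})\le c+\tfrac1n$ and the slope estimate $\Phi(h)\ge\Phi(g_{n})-\tfrac1n\,d(h,g_{n})$ for all $h\in\mathcal{P}$. From each $g_{n}$ I extract a point $u_{n}=g_{n}(t_{n})$ with $E(u_{n})\to c$ and $\|E'(u_{n})\|_{X^{*}}\to0$, by a localized downhill perturbation. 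The maximizing set $M_{n}=\{t:E(g_{n}(t))=\Phi(g_{n})\}$ is compact and, because $\Phi(g_{n})>a\ge\max_{K^{*}}E(g_{n})$, it is \emph{disjoint from} $K^{*}$, hence at positive distance from it. Assuming for contradiction that $\|E'(g_{n}(t))\|_{X^{*}}\ge\alpha>0$ near $M_{n}$, I take a pseudo-gradient field $V$ for $E$ on a neighborhood of $g_{n}(M_{n})$ together with a cutoff $\psi_{n}(t)$ built from $E\circ g_{n}$, e.g. $\psi_{n}=\max(0,\min(1,(E\circ g_{n}-b_{n})/(\Phi(g_{n})-b_{n})))$ with $a<b_{n}<\Phi(g_{n})$, which equals $1$ on $M_{n}$ and \emph{vanishes on $K^{*}$}, and set $h_{s}(t)=g_{n}(t)-s\,\psi_{n}(t)V(g_{n}(t))$. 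The vanishing of $\psi_{n}$ on $K^{*}$ keeps $h_{s}\in\mathcal{P}$, while the pseudo-gradient property gives $\Phi(h_{s})\le\Phi(g_{n})-c_{0}s+o(s)$ with $d(h_{s},g_{n})\le Cs$; for small $s$ this contradicts the Ekeland estimate once $\tfrac1n<c_{0}/C$. Hence $\inf_{t\in M_{n}}\|E'(g_{n}(t))\|_{X^{*}}\to0$, and the associated points $u_{n}$ form the desired sequence.

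The main obstacle I anticipate is the construction of this admissible perturbation and the verification that it strictly beats the Ekeland bound: one must produce the pseudo-gradient field on a uniform neighborhood of the compact set of near-maximizers, arrange the $t$-cutoff to be simultaneously $\equiv1$ on $M_{n}$ and $\equiv0$ on $K^{*}$ (this is exactly where the strict inequality $\max_{K}E\circ f>\max_{K^{*}}E\circ f$ is used, since it confines the maximizers to the interior $K\setminus K^{*}$), and control the first-order decay of $\Phi$ along $h_{s}$ uniformly over the possibly non-singleton set $M_{n}$, so the $o(s)$ term is genuinely negligible.

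An equivalent route, which makes the role of the hypothesis even more transparent, is the quantitative deformation lemma: if no Palais--Smale sequence at level $c$ existed, one would get a uniform lower bound on $\|E'\|_{X^{*}}$ in a strip $\{|E-c|<\delta\}$ and hence a deformation $\eta$ pushing $\{E\le c+\varepsilon\}$ into $\{E\le c-\varepsilon\}$ while fixing low sublevels. Applied to a near-optimal $f$, the composite $\eta\circ f$ still lies in $\mathcal{P}$ precisely because $\eta$ fixes $f^{*}(K^{*})$ (the boundary values lying in a sufficiently low sublevel), and then $\max_{K}E(\eta\circ f)\le c-\varepsilon<c$ contradicts the definition of $c$.
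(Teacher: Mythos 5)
The paper itself contains no proof of this statement: it is quoted as Theorem A from Ambrosetti--Rabinowitz and used as a black box in Section 4, so there is no internal argument to compare against. Your proposal is a correct, standard proof of exactly this ``minimax principle without compactness'' --- essentially the general minimax principle as proved via Ekeland's variational principle (Aubin--Ekeland, Willem). The preliminary verifications are right: $\mathcal{P}\neq\emptyset$ by Dugundji extension, $\mathcal{P}$ is closed in $C(K,X)$ hence complete, and $\Phi(f)=\max_{t\in K}E(f(t))$ is continuous and bounded below by the constant $a=\max_{K^{*}}E(f^{*})$, where you correctly exploit that $E\circ f$ restricted to $K^{*}$ is independent of $f\in\mathcal{P}$. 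The core step is also sound: for the Ekeland point $g_{n}$ the hypothesis gives the strict gap $\Phi(g_{n})>a$, so the ramp cutoff $\psi_{n}$ built from $E\circ g_{n}$ with $a<b_{n}<\Phi(g_{n})$ vanishes on $K^{*}$ (keeping $h_{s}\in\mathcal{P}$) and equals $1$ on the maximizing set $M_{n}$; taking $b_{n}$ close to $\Phi(g_{n})$ confines $\mathrm{supp}\,\psi_{n}$ to a compact region where, under the contradiction hypothesis, $\|E'(g_{n}(\cdot))\|_{X^{*}}$ stays bounded below and a locally Lipschitz pseudo-gradient exists; uniform continuity of $E'$ near the compact set $g_{n}(K)$ makes the $o(s)$ uniform, and the ramp computation controls the case where the new maximum of $E\circ h_{s}$ is attained at points with $\psi_{n}<1$, so $\Phi(h_{s})\leq\Phi(g_{n})-c_{0}s+o(s)$ with $d(h_{s},g_{n})\leq Cs$ genuinely contradicts the Ekeland slope once $1/n<c_{0}/C$. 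A diagonal choice $b_{n}\to\Phi(g_{n})$ then yields $u_{n}=g_{n}(t_{n})$ with $E(u_{n})\to c$ and $E'(u_{n})\to 0$, as claimed.

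One caveat: your closing ``equivalent route'' via the quantitative deformation lemma is not equivalent in full generality. The hypothesis only guarantees $\Phi(f)>a$ for each individual $f$, which allows $c=a$; in that degenerate case $f^{*}(K^{*})$ does not lie in a sublevel $\{E\leq c-2\varepsilon\}$, the deformation $\eta$ need not fix the boundary values, and $\eta\circ f$ may leave $\mathcal{P}$. Your Ekeland argument is immune to this precisely because the cutoff is calibrated to the $n$-dependent strict gap $\Phi(g_{n})>a$ rather than to a fixed strip around $c$; so the Ekeland route should be regarded as the actual proof, and the deformation route is valid only under the extra assumption $c>a$.
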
~\\
We consider the functional
$$
E(u)=\frac{1}{2}\int_{\Omega_{\varepsilon}}p(x)|\nabla u(x)|^{2}dx
 -\frac{1}{q}\int_{\Omega_{\varepsilon}}|u(x)|^{q}dx.
$$
In addition to Theorem A 1, the proof of Theorem~\ref{thhy3}
requires the following result (see \cite{b} and Proposition 2.1 in
\cite{s})
\begin{thoa}~\\
Suppose that for some sequence $(u_{j})\subset
H_{0}^{1}(\Omega_{\varepsilon})$ we have $E(u_{j})\rightarrow
c\in]\frac{1}{n}(p_{_{0}}S)^{\frac{n}{2}},\frac{2}{n}(p_{_{0}}S)^{\frac{n}{2}}[$
and $dE(u_{j})\rightarrow 0$ in $H^{-1}(\Omega_{\varepsilon})$. Then
$(u_{j})$ contains a strongly convergent subsequence. 
\end{thoa}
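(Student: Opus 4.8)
The plan is to run the global compactness (Struwe) scheme for the weighted functional $E$, taking advantage of the fact that the prescribed window for $c$ lies strictly between the energy of one and of two concentrated bubbles. First I would prove boundedness: from $E(u_{j})\to c$ and $dE(u_{j})\to 0$ in $H^{-1}(\Omega_{\varepsilon})$ one gets
\[
\tfrac{1}{n}\int_{\Omega_{\varepsilon}}p(x)|\nabla u_{j}|^{2}dx
= E(u_{j})-\tfrac{1}{q}\langle dE(u_{j}),u_{j}\rangle
= c+o(1)+o(1)\|u_{j}\|_{H_{0}^{1}(\Omega_{\varepsilon})},
\]
and since $p\ge p_{_{0}}>0$ this bounds $\|u_{j}\|_{H_{0}^{1}(\Omega_{\varepsilon})}$. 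After passing to a subsequence, $u_{j}\rightharpoonup u$ in $H_{0}^{1}(\Omega_{\varepsilon})$, $u_{j}\to u$ in $L^{2}$ and a.e.; letting $j\to\infty$ in $\langle dE(u_{j}),\varphi\rangle\to 0$ shows that $u$ is a weak solution of $-\textsl{div}(p\nabla u)=|u|^{q-2}u$ in $\Omega_{\varepsilon}$.

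Next I would split off the weak limit. Put $v_{j}=u_{j}-u\rightharpoonup 0$. The Brezis--Lieb lemma (as in \cite{bl}) gives $\|u_{j}\|_{q}^{q}=\|u\|_{q}^{q}+\|v_{j}\|_{q}^{q}+o(1)$, while weak convergence yields $\int p|\nabla u_{j}|^{2}=\int p|\nabla u|^{2}+\int p|\nabla v_{j}|^{2}+o(1)$. Testing $dE(u_{j})\to 0$ against $u_{j}$ and subtracting the equation satisfied by $u$, one obtains $\int_{\Omega_{\varepsilon}}p|\nabla v_{j}|^{2}-\|v_{j}\|_{q}^{q}\to 0$ and, consequently,
\[
E(u_{j})=E(u)+\tfrac{1}{n}\int_{\Omega_{\varepsilon}}p(x)|\nabla v_{j}|^{2}dx+o(1).
\]
Hence, along a further subsequence, $c=E(u)+\tfrac{1}{n}\ell$ with $\ell=\lim_{j}\int_{\Omega_{\varepsilon}}p|\nabla v_{j}|^{2}dx\ge 0$.

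The decisive step is an energy quantization. The weighted Sobolev inequality $\int p|\nabla v|^{2}\ge p_{_{0}}S\,\|v\|_{q}^{2}$, combined with $\int p|\nabla v_{j}|^{2}-\|v_{j}\|_{q}^{q}\to 0$, forces $\ell\ge (p_{_{0}}S)^{n/2}$ whenever $\ell>0$; the same reasoning applied to the solution $u$ (for which $\int p|\nabla u|^{2}=\|u\|_{q}^{q}$) gives $E(u)=\tfrac{1}{n}\int p|\nabla u|^{2}\ge \tfrac{1}{n}(p_{_{0}}S)^{n/2}$ whenever $u\neq 0$. If simultaneously $u\neq 0$ and $\ell>0$, then $c\ge \tfrac{2}{n}(p_{_{0}}S)^{n/2}$, contradicting the upper bound on $c$; thus $u=0$ or $\ell=0$. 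It therefore remains to exclude the case $u=0$, $\ell>0$, where the upper bound forces $(p_{_{0}}S)^{n/2}\le \ell<2(p_{_{0}}S)^{n/2}$, i.e. a \emph{single} residual bubble.

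\textbf{Main obstacle.} This last case is the crux and requires the blow-up analysis of \cite{s}. Rescaling $v_{j}$ at its concentration scale around a concentration point $x_{0}$, the renormalized sequence converges to a nontrivial entire solution of the frozen-coefficient equation $-p(x_{0})\Delta U=|U|^{q-2}U$ on $\R^{n}$ (interior concentration; boundary concentration being ruled out by a Pohozaev identity on the half-space), whose energy is the quantum $\tfrac{1}{n}(p(x_{0})S)^{n/2}$. The delicate point is that one must then show $dE(u_{j})\to 0$ is incompatible with a bubble that drifts under the force exerted by $\nabla p$, so that $x_{0}$ is necessarily a critical point of $p$; using $p\ge p_{_{0}}$ together with the location of such points relative to $\Omega_{\varepsilon}$, the single admissible bubble carries exactly $\tfrac{1}{n}(p_{_{0}}S)^{n/2}$, giving $c=\tfrac{1}{n}(p_{_{0}}S)^{n/2}$ and contradicting the \emph{strict} lower bound $c>\tfrac{1}{n}(p_{_{0}}S)^{n/2}$. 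Verifying that no bubble can settle at a point with $p(x_{0})>p_{_{0}}$ is precisely where the weight makes the argument harder than in the unweighted Brezis--Nirenberg setting. Once it is done, $\ell=0$, so $v_{j}\to 0$ strongly in $H_{0}^{1}(\Omega_{\varepsilon})$, which is the claimed subsequential strong convergence.
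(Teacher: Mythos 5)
Your preparatory steps are correct and standard: the identity $E(u_{j})-\frac{1}{q}\langle dE(u_{j}),u_{j}\rangle=\frac{1}{n}\int_{\Omega_{\varepsilon}}p|\nabla u_{j}|^{2}dx$ gives boundedness, the weak limit solves the equation, the Brezis--Lieb splitting yields $c=E(u)+\frac{1}{n}\ell$, and the weighted Sobolev inequality quantizes both terms, leaving only the case $u=0$, $(p_{_{0}}S)^{\frac{n}{2}}\le\ell<2(p_{_{0}}S)^{\frac{n}{2}}$. You have also correctly located the crux there; note that the paper offers no proof of this statement at all, but cites Brezis \cite{b} and Proposition 2.1 of Struwe \cite{s}, which concern the constant-coefficient case, where every bubble carries exactly the energy $\frac{1}{n}S^{\frac{n}{2}}$ no matter where it concentrates, so that the open interval contains no concentration level. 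That is exactly what fails when $p$ is not constant.

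The mechanism you propose to close the gap --- that $dE(u_{j})\to0$ forces a bubble to sit at a critical point of $p$, and then at a point where $p=p_{_{0}}$ --- is not available, because it is false; in fact the statement itself is false for the paper's class of weights. Fix $x_{0}\in\Omega_{\varepsilon}$, let $W=p(x_{0})^{\frac{n-2}{4}}U$ be the entire ground state of $-p(x_{0})\Delta W=W^{q-1}$, let $\chi\in C_{0}^{\infty}(\Omega_{\varepsilon})$ with $\chi\equiv1$ near $x_{0}$, and set $u_{j}(x)=\chi(x)\lambda_{j}^{-\frac{n-2}{2}}W\bigl(\frac{x-x_{0}}{\lambda_{j}}\bigr)$ with $\lambda_{j}\to0$. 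Then $u_{j}\rightharpoonup0$, $E(u_{j})\to\frac{1}{n}(p(x_{0})S)^{\frac{n}{2}}$, and, uniformly over $\|\phi\|_{H_{0}^{1}}\le1$,
\[
\langle dE(u_{j}),\phi\rangle=\int_{\Omega_{\varepsilon}}(p(x)-p(x_{0}))\nabla u_{j}\cdot\nabla\phi\,dx+o(1)
\le\Bigl(\int_{\Omega_{\varepsilon}}|p(x)-p(x_{0})|^{2}|\nabla u_{j}|^{2}dx\Bigr)^{\frac{1}{2}}+o(1)\longrightarrow0,
\]
where only the continuity of $p$ is used, since the measure $|\nabla u_{j}|^{2}dx$ concentrates at $x_{0}$. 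The drift created by $\nabla p$ is real but invisible in $H^{-1}$: it registers only against the translation modes $\partial_{e}u_{j}$, whose $H^{1}$-norm is of order $\lambda_{j}^{-1}$, so it contributes $O(\lambda_{j})$ to $\|dE(u_{j})\|_{H^{-1}}$. Hence every level $\frac{1}{n}(p(x_{0})S)^{\frac{n}{2}}$, $x_{0}\in\Omega_{\varepsilon}$, is a level of non-compactness with weak limit $0$. Since $p$ is continuous with $p(a)=p_{_{0}}$ and $a$ lies at distance $\varepsilon$ from $\Omega_{\varepsilon}$, for $\varepsilon$ small there exist points $x_{0}\in\Omega_{\varepsilon}$ with $p_{_{0}}<p(x_{0})<2^{\frac{2}{n}}p_{_{0}}$ (unless $p$ is constant on $\Omega_{\varepsilon}$, which is Coron's setting), and the corresponding levels lie strictly inside $]\frac{1}{n}(p_{_{0}}S)^{\frac{n}{2}},\frac{2}{n}(p_{_{0}}S)^{\frac{n}{2}}[$. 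Moreover, even granting your claim, a critical point of $p$ in $\bar{\Omega}_{\varepsilon}$ need not satisfy $p=p_{_{0}}$. So no blow-up analysis can complete the proof as stated: the correct weighted version of this compactness theorem must restrict to levels avoiding the set $\{\frac{1}{n}(p(x)S)^{\frac{n}{2}}:x\in\bar{\Omega}_{\varepsilon}\}$ and its finite sums (for instance, all levels below $\frac{1}{n}\bigl(\min_{\bar{\Omega}_{\varepsilon}}p\,S\bigr)^{\frac{n}{2}}$ are safe), and any application has to place the min-max value accordingly.
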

Now, we return to the proof of Theorem~\ref{thhy3}.\\We shall need
the following functions: \basn
&\Gamma&:H^{1}_{0}(\Omega_{\varepsilon})\longrightarrow \R,\quad
\Gamma(u)=\int_{\Omega_{\varepsilon}}p(x)\vert\nabla
u(x)\vert^{2}dx-\int_{\Omega_{\varepsilon}}|u(x)|^{q}dx.\\
&F&:H^{1}_{0}(\Omega_{\varepsilon})\longrightarrow \R^{n},\quad
F(u)=(p_{_{0}}S)^{-\frac{n}{2}}\int_{\Omega_{\varepsilon}}x p(x)|\nabla u(x)|^{2}dx.\\
\easn We have the following result
\begin{lemma}~\\
For every neighborhood $V$ of $\bar{\Omega}_{\varepsilon}$ there
exists $\eta >0$ such that if $u\not=0$, $\Gamma(u)=0$ and
$E(u)\leq\frac{1}{n}(p_{_{0}}S)^{\frac{n}{2}}+2\eta$, then $F(u)\in
V$. \label{lmhy6}
\end{lemma}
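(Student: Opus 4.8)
The plan is to argue by contradiction, reducing the claim to a concentration statement for a minimizing sequence of the weighted Sobolev quotient, which is then settled by the second concentration-compactness lemma of P.-L.~Lions.

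First, suppose the assertion fails: there are an open neighbourhood $V$ of $\bar{\Omega}_{\varepsilon}$ and functions $u_{j}\in H^{1}_{0}(\Omega_{\varepsilon})\setminus\{0\}$ with $\Gamma(u_{j})=0$, $E(u_{j})\le\frac{1}{n}(p_{_{0}}S)^{\frac{n}{2}}+\frac{2}{j}$ and $F(u_{j})\notin V$. Since $\Gamma(u_{j})=0$, putting $T_{j}=\int_{\Omega_{\varepsilon}}p|\nabla u_{j}|^{2}dx=\int_{\Omega_{\varepsilon}}|u_{j}|^{q}dx$ gives $E(u_{j})=(\frac12-\frac1q)T_{j}=\frac1n T_{j}$, hence $T_{j}\le(p_{_{0}}S)^{n/2}+\frac{2n}{j}$. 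Conversely, from $p\ge p_{_{0}}$, the Sobolev inequality $\int|\nabla u_{j}|^{2}\ge S\|u_{j}\|_{q}^{2}$ and $\Gamma(u_{j})=0$ one gets $T_{j}\ge p_{_{0}}S\,T_{j}^{2/q}$, that is $T_{j}\ge(p_{_{0}}S)^{n/2}$. Therefore $T_{j}\to(p_{_{0}}S)^{n/2}$.

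Next I would normalise $v_{j}=T_{j}^{-1/q}u_{j}$, so that $\|v_{j}\|_{q}=1$ and $\int_{\Omega_{\varepsilon}}p|\nabla v_{j}|^{2}dx=T_{j}^{1-2/q}=T_{j}^{2/n}\to p_{_{0}}S$; thus $(v_{j})$ is a minimizing sequence for $\inf\{\int_{\Omega_{\varepsilon}}p|\nabla v|^{2}dx:\ \|v\|_{q}=1\}=p_{_{0}}S$. Passing to a subsequence with $v_{j}\rightharpoonup v$ in $H^{1}_{0}(\Omega_{\varepsilon})$, the concentration-compactness lemma yields atoms $x_{k}\in\bar{\Omega}_{\varepsilon}$ and masses with $|v_{j}|^{q}\rightharpoonup|v|^{q}dx+\sum_{k}\nu_{k}\delta_{x_{k}}$, $|\nabla v_{j}|^{2}\rightharpoonup\mu\ge|\nabla v|^{2}dx+\sum_{k}\mu_{k}\delta_{x_{k}}$ and $\mu_{k}\ge S\nu_{k}^{2/q}$. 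Combining $p\ge p_{_{0}}$ with $\int|\nabla v|^{2}\ge S\|v\|_{q}^{2}$ gives
\[
p_{_{0}}S=\lim_{j}\int_{\Omega_{\varepsilon}}p|\nabla v_{j}|^{2}dx\ \ge\ p_{_{0}}S\Big(\|v\|_{q}^{2}+\sum_{k}\nu_{k}^{2/q}\Big),
\]
so $\|v\|_{q}^{2}+\sum_{k}\nu_{k}^{2/q}\le1$, while $\|v\|_{q}^{q}+\sum_{k}\nu_{k}=1$. Since $2/q<1$ forces $t^{2/q}\ge t$ on $[0,1]$, these two relations are compatible only if exactly one mass equals $1$ and all the others vanish; the possibility $\|v\|_{q}=1$ is excluded because then $v$ would attain $S$ on the bounded set $\Omega_{\varepsilon}$, which is impossible. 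Hence $v=0$ and $v_{j}$ concentrates at a single point $x_{0}\in\bar{\Omega}_{\varepsilon}$, with $\mu=S\delta_{x_{0}}$.

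Finally I would convert this concentration into convergence of the barycenter. The measures $\rho_{j}:=p(x)|\nabla u_{j}|^{2}dx$ have total mass $\rho_{j}(\bar{\Omega}_{\varepsilon})=T_{j}\to(p_{_{0}}S)^{n/2}$, and since $u_{j}$ is concentrated at the single point $x_{0}$ they satisfy $\rho_{j}\rightharpoonup(p_{_{0}}S)^{n/2}\delta_{x_{0}}$; consequently $\int_{\Omega_{\varepsilon}}x\,p|\nabla u_{j}|^{2}dx\to(p_{_{0}}S)^{n/2}x_{0}$ and $F(u_{j})\to x_{0}$. As $x_{0}\in\bar{\Omega}_{\varepsilon}\subset V$ and $V$ is open, $F(u_{j})\in V$ for $j$ large, contradicting $F(u_{j})\notin V$. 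The main obstacle is the concentration-compactness step: one must exclude both vanishing, which is immediate from $\|v_{j}\|_{q}=1$, and dichotomy into several bubbles, and it is precisely here that the energy lying only slightly above the one-bubble level $\frac1n(p_{_{0}}S)^{n/2}$ is decisive, through the strict superadditivity of $t\mapsto t^{2/q}$.
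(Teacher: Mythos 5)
Your proposal is correct and follows essentially the same route as the paper: argue by contradiction, use $\Gamma(u_j)=0$ together with the energy bound to pin $\int_{\Omega_{\varepsilon}}p|\nabla u_j|^2dx$ at $(p_{_{0}}S)^{\frac{n}{2}}$, conclude single-point concentration at some $x_0\in\bar{\Omega}_{\varepsilon}$, and deduce $F(u_j)\to x_0$, hence $F(u_j)\in V$ for large $j$. The only difference is that where the paper simply cites Coron's Theorem 2 (equivalently Lions' Lemmas I.1 and I.4) for the concentration step, you derive it explicitly from Lions' second concentration-compactness lemma, the superadditivity argument excluding several bubbles, and the non-attainment of $S$ on bounded domains --- a correct, self-contained unpacking of the result the paper invokes.
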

\begin{proof} We proceed by contradiction. We assume that there exists $V$ a
compact neighborhood of $\bar{\Omega}_{\varepsilon}$ not containing
$a$, such that $\forall j\in \N^{*}$, we have \basn
u_{j}&\not=&0,\\
\Gamma(u_{j})&=&0,\\
E(u_{j})&\le&\frac{1}{n}(p_{_{0}}S)^{\frac{n}{2}}+\frac{1}{j},\\
F(u_{j})&\not\in& V. \easn Since $\Gamma(u_{j})=0$, we see that
\basn \hspace{-23mm}\int_{\Omega_{\varepsilon}}p(x)|\nabla
u_{j}|^{2}dx&=&\int_{\Omega_{\varepsilon}}|u_{j}|^{q}dx\easn and
\basn\int_{\Omega_{\varepsilon}}p(x)|\nabla
u_{j}|^{2}dx&=&\left(\frac{\int_{\Omega_{\varepsilon}}p(x)|\nabla
u_{j}|^{2}dx}{\left(\int_{\Omega_{\varepsilon}}|u_{j}|^{q}dx\right)^{\frac{2}{q}}}\right)^{\frac{n}{2}}.
\easn Consequently
\begin{equation*}
E(u_{j})=\frac{1}{n}\int_{\Omega_{\varepsilon}}p(x)|\nabla
u_{j}(x)|^{2}dx.\end{equation*} Using the definition of $u_{j}$, the
fact that $p_{_{0}}=\min_{\bar{\Omega}} p(x)$ and the definition of
$S$, we write \basn
\frac{1}{n}(p_{_{0}}S)^{\frac{n}{2}}\le\frac{1}{n}\left(\frac{p_{_{0}}\int_{\Omega_{\varepsilon}}|\nabla
u_{j}|^{2}dx}{\left(\int_{\Omega_{\varepsilon}}|u_{j}|^{q}dx\right)^{\frac{2}{q}}}\right)^{\frac{n}{2}}
\le E(u_{j})\le \frac{1}{n}(p_{_{0}}S)^{\frac{n}{2}}+\frac{1}{j}
\easn and we deduce
\begin{equation*}
\int_{\Omega_{\varepsilon}}p(x)|\nabla
u_{j}(x)|^{2}dx=(p_{_{0}}S)^{\frac{n}{2}}+o(1).
\end{equation*}Applying the Theorem 2 in \cite{c}, (see also Lemma I.1 and Lemma I.4 in \cite{l}), for a subsequence of $(u_{j})_j$ still denoted by
$(u_{j})_j$, there exists $x_{0}\in\bar{\Omega}_{\varepsilon}$ such
that
$$
p(x)|\nabla u_{j}|^{2}\longrightarrow
(p_{_{0}}S)^{\frac{n}{2}}\delta_{x_{0}} \,(j\rightarrow \infty),
$$
where the above convergence is understood for the weak topology of
bounded measures on $\bar{\Omega}_{\varepsilon}$ and where
$\delta_{x_{0}}$ is the Dirac measure at $x_{0}$.\\As a consequence,
$F(u_{j})\in\bar{\Omega}_{\varepsilon}\subset V$, and this
contradicts the hypothesis.
\end{proof}Let $R_{0}>0$ such that $B(a,2R_{0})\subset \Omega$.\\For $k\in
\N^{*}$, let $\varphi_{k}\in C^{\infty}(\R^{n},[0,1])$ such that
$$ \left\{
\begin{array}{ll}
\varphi_{k}(x)=0 &\textrm{if $\vert x-a\vert\le{\frac{1}{4k^{2}}}$
and if
$| x-a|\ge2R_{0}$},\\
\varphi_{k}(x)=1 &\textrm{if $\frac{1}{2k^{2}}\le\vert x-a\vert\le
{R_{0}}$.}
\end{array}
\right.
$$
We consider the family of functions
$$
u^{\sigma}_{t}(x)=\left[\frac{1-t}{(1-t)^{2}+\vert
x-a-t\sigma\vert^{2}}\right]^{\frac{n-2}{2}},
$$
where $t\in [0,1[$, $\sigma\in\Sigma$ and where $\Sigma$ denotes
the unit sphere of $\R^{n}$.\\
We see easily that $\int_{\R^{n}}\vert\nabla
u^{\sigma}_{t}\vert^{2}dx$ and $\int_{\R^{n}}\vert
u^{\sigma}_{t}\vert^{q}dx$ are independent of $t\in [0,1[$ and of
$\sigma\in\Sigma$. We also have
$$
\int_{\R^{n}}\vert\nabla
u^{\sigma}_{t}(x)\vert^{2}dx=S\left(\int_{\R^{n}}\vert
u^{\sigma}_{t}(x)\vert^{q}dx\right)^{\frac{2}{q}}. \label{eqt}
$$
We set
$$v^{\sigma}_{t,k}(x)=\frac{(1-t)^{\frac{n-2}{2}}k^{\frac{n-2}{2}}\varphi_{k}(x)}{((1-t)^{2}+\vert
k(x-a-tr_{0}\sigma)\vert^{2})^{\frac{n-2}{2}}},$$ we remark that $v^{\sigma}_{t,k}\in H^{1}_{0}(\Omega_{\varepsilon})$. For $r>0$, let $g(r)=E(rv^{\sigma}_{t,k})$, then\\
$rg'(r)=\Gamma(rv^{\sigma}_{t,k})$, $g(r)\rightarrow -\infty$,
when $r\rightarrow +\infty$, $g(0)=0$ and $g(r)>0$ for $r>0$ small enough.\\
We conclude, from the above, that $g$ reaches its maximum at
$$r=\left[\frac{\int_{\Omega_{\varepsilon}}p(x)|\nabla v_{t,k}^{\sigma}|^{2}dx}{\int_{\Omega_{\varepsilon}}|v_{t,k}^{\sigma}|^{q}dx}\right]^{\frac{1}{q-2}}>0.$$
We set $w^{\sigma}_{t,k}=r v^{\sigma}_{t,k}$. We have
\begin{lemma}~\\The following two statements are true:
\begin{equation*}\begin{array}{lllll}\textrm{\bf a)}\forall \delta>0,\, \exists
k_{0}\ge 1\,\,\mbox{such that}\,\,( \forall \,k\ge
k_{0})\,\textrm{then}&\\&\hspace{-30mm}(\forall\sigma\in\Sigma\,\,\textrm{and\,}\,\forall t\in[0,1[,\,\, E(w^{\sigma}_{t,k})\le\frac{1}{n}(p_{_{0}}S)^{\frac{n}{2}}+\delta)\\
\textrm{\bf b)}\forall \alpha>0,\, \exists\mu>0\,\,\mbox{such
that}\,\,(\mu<t<1)\,\textrm{then}&\\&\hspace{-30mm}(\forall\sigma\in\Sigma\,\,\textrm{and\,}\,\forall
k\ge
1,\,\,E(w^{\sigma}_{t,k})\hspace{-1mm}\le\frac{1}{n}(p_{_{0}}S)^{\frac{n}{2}}+\alpha)
&\\&\hspace{-30mm}\mbox{and\,}\quad\vert
F(w^{\sigma}_{t,k})\hspace{-1mm}-\hspace{-1mm}(a+r_{0}\sigma)\vert\le\alpha.
\end{array}\end{equation*} \label{lm7}
\end{lemma}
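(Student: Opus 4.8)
The plan is to reduce both statements to a single estimate for the weighted Sobolev quotient of a concentrating bubble, and then to exploit the near-constancy of $p$ on $\bar{B}(a,r_{0})$. First I would record the decisive fact that, because $w^{\sigma}_{t,k}=r v^{\sigma}_{t,k}$ is taken at the maximum of $g(r)=E(rv^{\sigma}_{t,k})$ and $rg'(r)=\Gamma(rv^{\sigma}_{t,k})$, one has $\Gamma(w^{\sigma}_{t,k})=0$. Repeating verbatim the computation already used in the proof of Lemma~\ref{lmhy6} (namely $\frac{1}{2}-\frac{1}{q}=\frac{1}{n}$ together with the scale invariance of the quotient under $v\mapsto rv$), this yields the closed formula
\[
E(w^{\sigma}_{t,k})=\frac{1}{n}\,Q(v^{\sigma}_{t,k})^{\frac{n}{2}},\qquad Q(v)=\frac{\int_{\Omega_{\varepsilon}}p(x)|\nabla v|^{2}\,dx}{\left(\int_{\Omega_{\varepsilon}}|v|^{q}\,dx\right)^{2/q}}.
\]
Thus everything is controlled once $Q(v^{\sigma}_{t,k})$ is estimated, and the target bounds $\frac{1}{n}(p_{_{0}}S)^{\frac{n}{2}}+\delta$ (resp.\ $+\alpha$) amount to $Q(v^{\sigma}_{t,k})\le p_{_{0}}S+o(1)$.

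The second step is to recognize $v^{\sigma}_{t,k}$ as a truncated Aubin--Talenti bubble. Writing $\mu=\frac{1-t}{k}$ and $y=a+tr_{0}\sigma$, a direct algebraic rearrangement gives
\[
v^{\sigma}_{t,k}(x)=\mu^{\frac{n-2}{2}}\,\frac{\varphi_{k}(x)}{(\mu^{2}+|x-y|^{2})^{\frac{n-2}{2}}}=\varphi_{k}(x)\,U_{\mu,y}(x),
\]
where $U_{\mu,y}$ is the bubble normalized so that $\int|\nabla U_{\mu,y}|^{2}$ and $\int|U_{\mu,y}|^{q}$ are independent of $\mu$. I would then carry out the standard asymptotic expansion: as $\mu\to0$ the inner cut-off (a hole of radius $\frac{1}{4k^{2}}\ll\mu$) and the outer cut-off (at radius $2R_{0}\gg\mu$) each remove only an $o(1)$ proportion of the energy, while splitting $p(x)=p(y)+(p(x)-p(y))$ and using the \emph{uniform} continuity of $p$ on the compact set $\bar{B}(a,2R_{0})$ to absorb the difference gives $\int p|\nabla v^{\sigma}_{t,k}|^{2}=p(y)\int|\nabla v^{\sigma}_{t,k}|^{2}+o(1)$, the $\int|v^{\sigma}_{t,k}|^{q}$ being weight-independent. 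Hence $Q(v^{\sigma}_{t,k})=p(y)S+o(1)$, the $o(1)$ being uniform in the concentration point $y\in\bar{B}(a,r_{0})$.

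The third, geometric, ingredient is that on $\bar{B}(a,r_{0})$ the weight is almost constant: by the defining property of $r_{0}$ one has $|p(a+r_{0}\sigma)-p_{_{0}}|<\frac{\theta}{2S^{n/2}}$, and the standing hypothesis $\nabla p(x).(x-a)\ge0$ forces $s\mapsto p(a+s\sigma)$ to be non-decreasing, so that $p_{_{0}}\le p(y)\le p_{_{0}}+\frac{\theta}{2S^{n/2}}$ for every $y=a+tr_{0}\sigma$, $t\in[0,1]$. For part \textbf{a)} I use $\mu=\frac{1-t}{k}\le\frac{1}{k}$, so that $k\to\infty$ drives $\mu\to0$ uniformly in $t\in[0,1[$ and $\sigma$; enlarging $k_{0}$ makes the $o(1)$ smaller than any prescribed amount, and with $\theta$ small this yields $E(w^{\sigma}_{t,k})\le\frac{1}{n}(p_{_{0}}S)^{\frac{n}{2}}+\delta$. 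For part \textbf{b)} I use instead $\mu=\frac{1-t}{k}\le 1-t$, so that $t\to1$ drives $\mu\to0$ uniformly in $k\ge1$; the same estimate gives the first inequality. For the second, set $y_{1}=a+r_{0}\sigma$: as $t\to1$ the measure $p(x)|\nabla w^{\sigma}_{t,k}|^{2}\,dx$ concentrates at $y_{1}$ with total mass $\int p|\nabla w^{\sigma}_{t,k}|^{2}=nE(w^{\sigma}_{t,k})\to(p(y_{1})S)^{\frac{n}{2}}$ (the explicit analogue of the concentration used in Lemma~\ref{lmhy6}), so the definition of $F$ gives $F(w^{\sigma}_{t,k})\to y_{1}\,(p(y_{1})/p_{_{0}})^{\frac{n}{2}}=y_{1}+O(\theta)$, whence $|F(w^{\sigma}_{t,k})-(a+r_{0}\sigma)|\le\alpha$ once $t$ is close to $1$.

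The main obstacle is the \emph{uniformity} of the bubble asymptotics over the whole parameter range. For part \textbf{a)} the delicate regime is $t$ near $0$, where the centre $y$ sits close to $a$ and the inner hole of radius $\frac{1}{4k^{2}}$ meets the core of the bubble; one must check that, since $\frac{1}{4k^{2}}\ll\mu$, this truncation still costs only $o(1)$, and that the error in $Q(v^{\sigma}_{t,k})=p(y)S+o(1)$ is governed by the modulus of continuity of $p$ at scale $\mu\le\frac{1}{k}$ rather than by a Lipschitz bound, which is unavailable for $p\in H^{1}(\Omega)\cap C(\bar{\Omega})$. Once this uniform expansion is secured, the remaining bookkeeping is routine: the $k$-dependent (resp.\ $t$-dependent) error is made less than the prescribed quantity by enlarging $k_{0}$ (resp.\ pushing $t\to1$), while the contribution of the oscillation of $p$ is kept below $\delta$ (resp.\ $\alpha$) by the standing smallness of $\theta$.
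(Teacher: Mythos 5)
Your proposal is correct and follows essentially the same route as the paper's proof: the identity $E(w^{\sigma}_{t,k})=\frac{1}{n}\,Q(v^{\sigma}_{t,k})^{n/2}$ coming from $\Gamma(w^{\sigma}_{t,k})=0$, the bubble asymptotics $Q(v^{\sigma}_{t,k})=p(a+tr_{0}\sigma)S+o(1)$ at concentration scale $(1-t)/k$ (driven by $k\to\infty$ in a) and by $t\to1$ in b)), the definition of $r_{0}$ combined with the ray-monotonicity of $p$ to control $p(a+tr_{0}\sigma)$, and for $F$ the same concentration argument giving $F(w^{\sigma}_{t,k})\approx\bigl(p(a+r_{0}\sigma)/p_{_{0}}\bigr)^{n/2}(a+r_{0}\sigma)$. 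Your explicit treatment of uniformity through the single parameter $\mu=(1-t)/k$ and the modulus of continuity of $p$ is, if anything, slightly more careful than the paper's fixed-$t$ dominated-convergence computations, and you share the paper's implicit convention that the fixed $\theta$ is small relative to $\delta$ and $\alpha$.
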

\begin{proof} Before proving this Lemma, let us remark that the function
$v_{t,k}^{\sigma}$ corresponds to the function $u_{a,\varepsilon}$
defined in the beginning of this paper, so for more details of
calculus we refer to section 2.\\We start by proving the assertion
a). Let $t\in[0,1[$, we have \basn
E(w^{\sigma}_{t,k})&=&\frac{1}{2}\int_{\Omega_{\varepsilon}}p(x)|\nabla
w^{\sigma}_{t,k}|^{2}dx-\frac{1}{q}\int_{\Omega_{\varepsilon}}\vert
w^{\sigma}_{t,k}\vert^{q}dx,\\[\medskipamount]
&=&\frac{r^{2}}{2}\int_{\Omega_{\varepsilon}}p(x)|\nabla
v^{\sigma}_{t,k}|^{2}dx-\frac{r^{q}}{q}\int_{\Omega_{\varepsilon}}\vert
v^{\sigma}_{t,k}\vert^{q}dx.\easn Using the definition of $r$, the
definition of $\varphi_{k}$ and applying the Dominated Convergence
Theorem, we obtain, as $k\rightarrow\infty$,\basn
E(w^{\sigma}_{t,k})&=&\frac{1}{n}\left[\frac{k^{n}(n-2)^{2}(1-t)^{n-2}\int_{_{\{\frac{1}{2k^{2}}\leq|x-a|\leq{R_{0}}\}}}p(x)\frac{|k
(x-a-tr_{0}\sigma)|^{2}}{((1-t)^{2}+|k
(x-a-tr_{0}\sigma)|^{2})^{n}}dx}{\left[k^{n}(n-2)^{2}(1-t)^{n}\int_{_{\{\frac{1}{2k^{2}}\leq|x-a|\leq{R_{0}}\}}}\frac{1}{((1-t)^{2}+|k
(x-a-tr_{0}\sigma)|^{2})^{n}}dx\right]^{\frac{2}{q}}}\right]^{\frac{
n}{2}}\\[\medskipamount]
&+&o(1).\easn By the following change of variable $y=\frac{k
(x-a-tr_{0}\sigma)}{1-t}$, we see that\basn
E(w^{\sigma}_{t,k})&=&\frac{1}{n}\left[\displaystyle\frac{(n-2)^{2}\int_{_{\{\frac{1}{2k(1-t)}-\frac{tr_{0}}{1-t}\leq|y|\leq{\frac{kR_{0}}{1-t}+\frac{tr_{0}}{1-t}}\}}}p(\frac{y(1-t)}{k}+a+tr_{0}\sigma)\frac{|y|^{2}}{(1+|y|^{2})^{n}}dy}{\left[\int_{_{\{\frac{1}{2k(1-t)}-\frac{tr_{0}}{1-t}\leq|y|\leq{\frac{kR_{0}}{1-t}+\frac{tr_{0}}{1-t}}\}}}\hspace{-2mm}\frac{1}{(1+|y|^{2})^{n}}dy\right]^{\frac{2}{q}}}\hspace{-1mm}\right]^{\frac{n}{2}}\\[\medskipamount]
&+&o(1).\easn Applying again the Dominated Convergence Theorem, we
deduce, as $k\rightarrow\infty$, that \basn
E(w^{\sigma}_{t,k})&=&\frac{1}{n}\left[\frac{(n-2)^{2}p(a+tr_{0}\sigma)\int_{\R^{n}}\frac{|y|^{2}}{(1+|y|^{2})^{n}}dy}{\left[\int_{\R^{n}}\frac{1}{((1-t)^{2}+|y|^{2})^{n}}dy\right]^{\frac{2}{q}}}\right]^{\frac{n}{2}}+o(1),\\[\medskipamount]
&=&\frac{1}{n}(p(a+tr_{0}\sigma))^{\frac{n}{2}}S^{\frac{n}{2}}+o(1).\easn
Now, using the definition of $r_{0}$, a simple computation shows
that $\forall \delta>0$, $\exists k_{0}\ge 1$ such that $\forall
k\ge k_{0}$, we have
$$E(w^{\sigma}_{t,k})\leq\frac{1}{n}(p_{_{0}}S)^{\frac{n}{2}}+\delta,
$$ which finishes the proof of a).\\Now we return to the proof of
b), let $k\in\N^{*}$, we have\basn
E(w^{\sigma}_{t,k})&=&\frac{1}{2}\int_{\Omega_{\varepsilon}}p(x)|\nabla
w^{\sigma}_{t,k}|^{2}dx-\frac{1}{q}\int_{\Omega_{\varepsilon}}\vert
w^{\sigma}_{t,k}\vert^{q}dx\\[\medskipamount]
&=&\frac{r^{2}}{2}\int_{\Omega_{\varepsilon}}p(x)|\nabla
v^{\sigma}_{t,k}|^{2}dx-\frac{r^{q}}{q}\int_{\Omega_{\varepsilon}}\vert
v^{\sigma}_{t,k}\vert^{q}dx. \easn Looking at the definition of
$\varphi_{k}$ and $r$, we easily see, as $t\rightarrow 1$, that
\basn
E(w^{\sigma}_{t,k})=\frac{1}{n}\left[\frac{k^{n}(n-2)^{2}(1-t)^{n-2}\int_{\R^{n}}p(x)\frac{|k
(x-a-tr_{0}\sigma)|^{2}}{((1-t)^{2}+|k
(x-a-tr_{0}\sigma)|^{2})^{n}}dx}{\left[k^{n}(1-t)^{n}\int_{\R^{n}}\frac{1}{((1-t)^{2}+|k
(x-a-tr_{0}\sigma)|^{2})^{n}}dx\right]^{\frac{2}{q}}}\right]^{\frac{n}{2}}+O((1-t)^{n-2}).\easn
 By the change of variable $y=\frac{k(x-a-tr_{0}\sigma)}{1-t}$, we get
\basn
E(w^{\sigma}_{t,k})=\frac{1}{n}\left[\frac{(n-2)^{2}\int_{\R^{n}}p(\frac{(1-t)y}{k}+a+tr_{0}\sigma)\frac{|y|^{2}}{(1+|y|^{2})^{n}}dy}{\left[\int_{\R^{n}}\frac{1}{(1+|y|^{2})^{n}}dy\right]^{\frac{2}{q}}}\right]^{\frac{n}{2}}+O((1-t)^{n-2}).\easn
Applying the Dominated Convergence Theorem, we obtain \basn
E(w^{\sigma}_{t,k})&=&\frac{1}{n}\left[\frac{(n-2)^{2}p(a+r_{0}\sigma)\int_{\R^{n}}\frac{|y|^{2}}{(1+|y|^{2})^{n}}dy}{\left[\int_{\R^{n}}\frac{1}{(1+|y|^{2})^{n}}dy\right]^{\frac{2}{q}}}\right]^{\frac{n}{2}}+O((1-t)^{n-2}),\\[\medskipamount]
&=&\frac{1}{n}(p(a+r_{0}\sigma))^{\frac{n}{2}}S^{\frac{n}{2}}+O((1-t)^{n-2}).\easn
Using the definition of $r_{0}$, a simple computation shows that
$\forall \alpha>0,$ $\exists \mu>0$ such that $\forall\,\mu<t<1$, we
have $$
E(w^{\sigma}_{t,k})\leq\frac{1}{n}(p_{_{0}}S)^{\frac{n}{2}}+\alpha.$$On
the other hand \basn
F(w^{\sigma}_{t,k})&=&(p_{_{0}}S)^{-\frac{n}{2}}\int_{\R^{n}}x
p(x)|\nabla
w^{\sigma}_{t,k}(x)|^{2}dx,\\[\medskipamount]
&=&(p_{_{0}}S)^{-\frac{n}{2}}r^{2}\int_{\R^{n}}x p(x)|\nabla
v^{\sigma}_{t,k}(x)|^{2}dx.\easn By the definition of
$v_{t,k}^{\sigma}$ and $r$, we write \basn
F(w^{\sigma}_{t,k})=(p_{_{0}}S)^{-\frac{n}{2}}\left[\frac{(1-t)^{n-2}(n-2)^{2}\int_{\R^{n}}p(x)\frac{|k
(x-a-tr_{0}\sigma)|^{2}}{((1-t)^{2}+|k
(x-a-tr_{0}\sigma)|^{2})^{n}}dx}{(1-t)^{n}\int_{\R^{n}}\frac{1}{((1-t)^{2}+|k
(x-a-tr_{0}\sigma)|^{2})^{n}}dx}\right]^{\frac{
2}{q-2}}\times\\[\medskipamount]
\hspace{2mm}(1-t)^{n-2}k^{n}(n-2)^{2}\int_{\R^{n}}x\, p(x)\frac{|k
(x-a-tr_{0}\sigma)|^{2}}{((1-t)^{2}+|k
(x-a-tr_{0}\sigma)|^{2})^{n}}dx+o(1-t). \easn The change of variable
$y=\frac{k(x-a-tr_{0}\sigma)}{1-t}$ gives \basn
F(w^{\sigma}_{t,k})&=&(p_{_{0}}S)^{-\frac{n}{2}}\left[\frac{(n-2)^{2}\int_{\R^{n}}p\left(\frac{(1-t)y}{k}+a+tr_{0}\sigma\right)\frac{|y|^{2}}{(1+|y|^{2})^{n}}dx}{\int_{\R^{n}}\frac{1}{(1+|y|^{2})^{n}}dx}\right]^{\frac{
2}{q-2}}\times\\[\medskipamount]
&~&(n-2)^{2}\int_{\R^{n}}\frac{(\frac{(1-t)y}{k}+a+tr_{0}\sigma)\,
p(\frac{(1-t)y}{k}+a+tr_{0}\sigma)\,|y|^{2}}{(1+|y|^{2})^{n}}dx+o(1-t).
\easn Applying the Dominated Convergence Theorem, we deduce that
\basn
F(w^{\sigma}_{t,k})&=&(p_{_{0}}S)^{-\frac{n}{2}}(p(a+r_{0}\sigma))^{\frac{n}{2}}\left[\frac{(n-2)^{2}\int_{\R^{n}}\frac{|y|^{2}}{(1+|y|^{2})^{n}}dy}{\left[\int_{\R^{n}}\frac{1}{(1+|y|^{2})^{n}}dy\right]^{\frac{2}{q}}}\right]^{\frac{q}{q-2}}\hspace{-2mm}(a+r_{0}\sigma)+o(1-t),\\[\medskipamount]
&=&(p_{_{0}}S)^{-\frac{n}{2}}(p(a+r_{0}\sigma))^{\frac{n}{2}}S^{\frac{n}{2}}(a+r_{0}\sigma)+o(1-t).
 \easn
Using the definition of $r_{0}$ we get the desired result.
\end{proof}
{\bf{Consequences}}\\
Let $V$ be a compact neighborhood of $\bar {\Omega}_{\varepsilon}$
not containing $a$. Let $0<\eta<r_{0}$ small enough, which
corresponds to $V$ as in Lemma~\ref{lmhy6}, verifying
$r_{0}\sigma+\xi\not=a$ for
$|\sigma|=1$ and $|a-\xi|\leq\eta$.\\
By Lemma~\ref{lm7}, there exists $k_{0}\ge 1$ such that :\\
\ba E(w^{\sigma}_{t,k_{0}})\leq
\frac{2}{n}(p_{_{0}}S)^{\frac{n}{2}}-\eta,\,
\forall\sigma\in\Sigma,\, \forall t\in[0,1[.\label{eql27} \ea
\begin{remark}~\\
We choose $\varepsilon_{0}=\varepsilon_{0}(\Omega,p)\leq
\frac{1}{4k_{0}^{2}}$ small enough and such that $\forall
0<\varepsilon<\varepsilon_{0}$ we have $\{x\,\,|x-a|\leq
\varepsilon\}\not\subset V$.
\end{remark}We fix $\lambda>1$, large enough such that $ E(\lambda
w^{\sigma}_{t,k_{0}})<0$, $\forall\sigma\in\Sigma$, $\forall t\in
[0,1[$. In order to apply Theorem A 1, we define the sets $K$,
$K^{*}$
and the function $f^{*}$ as\\
$K=[0,1]\times\bar{B}(a,r_{0})$,\\$K^{*}=\partial
K=[0,1]\times\partial\bar{B}(a,r_{0})\cup\{0,1\}\times\bar{B}(a,r_{0})$ and\\$f^{*}:K\rightarrow H^{1}_{0}(\Omega_{\varepsilon}),$\\
$f^{*}(s,tr_{0}\sigma)=\lambda s w^{\sigma}_{t,k_{0}}$.\\
The conclusion of Theorem \ref{thhy3} follows from the next
\begin{lemma}~\\
\label{lm8} We have
$$
\sup_{K}E(f)\ge \frac{1}{n} (p_{_{0}}S)^{\frac{n}{2}}+2\eta ,\,
\forall f\in \mathcal{P}.
$$
\end{lemma}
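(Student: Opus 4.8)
The plan is to argue by contradiction and to convert the statement into a Brouwer-degree (linking) assertion, the topological input being the presence of the hole $B(a,\varepsilon)$ together with the concentration analysis of Lemmas~\ref{lmhy6} and~\ref{lm7}. So I would suppose that some $f\in\mathcal{P}$ satisfies $\sup_{K}E(f)<\frac{1}{n}(p_{_{0}}S)^{\frac{n}{2}}+2\eta$, and aim to produce a point $z^{*}\in K$ at which $f(z^{*})\neq 0$, $\Gamma(f(z^{*}))=0$ and $F(f(z^{*}))=a$. Indeed, on the Nehari set $\{\Gamma=0,\,u\neq0\}$ one has $E(u)=\frac{1}{n}\int_{\Omega_{\varepsilon}}p(x)|\nabla u|^{2}dx$, so at such a $z^{*}$ the bound $E(f(z^{*}))\le\sup_{K}E(f)<\frac{1}{n}(p_{_{0}}S)^{\frac{n}{2}}+2\eta$ places $f(z^{*})$ exactly in the range of Lemma~\ref{lmhy6}; that lemma would then force $F(f(z^{*}))\in V$, contradicting $F(f(z^{*}))=a$ since $a\notin V$. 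Hence no such $f$ can exist, which is the claim.

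It therefore remains to find $z^{*}$, i.e. to show that $z\mapsto(\Gamma(f(z)),F(f(z)))$ meets the value $(0,a)$, and here I would use the product structure $K=[0,1]\times\bar{B}(a,r_{0})$. The first factor is the mountain-pass (scaling) variable: for every $y$ one has $f(0,y)=f^{*}(0,y)=0$, whence $\Gamma(f(s,y))>0$ for small $s>0$ (the quadratic term dominates), while $f(1,y)=f^{*}(1,y)=\lambda w^{\sigma}_{t,k_{0}}$ has $\Gamma<0$ because $\lambda>1$; by the intermediate value theorem the path $s\mapsto f(s,y)$ crosses the Nehari manifold at an interior scaling, at a non-zero value. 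This reduces the problem to an $n$-dimensional degree statement over the concentration ball: composing the Nehari crossing with $F$ yields a map $\Phi:\bar{B}(a,r_{0})\to\R^{n}$, and the goal becomes $a\in\Phi(\bar{B}(a,r_{0}))$.

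The degree of $\Phi$ is read off from its boundary values on $\partial B(a,r_{0})$, where the data are prescribed by $f=f^{*}$. Lemma~\ref{lm7}~b) identifies the limiting barycenters: as $t\to1$ one has $F(w^{\sigma}_{t,k_{0}})\to a+r_{0}\sigma$, so the boundary restriction of $\Phi$ is homotopic, through maps avoiding $a$ (the total mass $(p_{_{0}}S)^{-\frac{n}{2}}\int p|\nabla\cdot|^{2}$ staying close to $1$ on the low-energy Nehari set), to $\sigma\mapsto a+r_{0}\sigma$, which has degree $+1$ about $a$. Consequently any continuous extension of this boundary map to the whole ball — in particular $\Phi$ — must attain the value $a$ somewhere in $\bar{B}(a,r_{0})$; pulling that point back to $K$ produces the desired $z^{*}$, and the contradiction above closes the argument.

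I expect the main obstacle to be the rigorous construction of the reduced map $\Phi$, namely the continuous (or at least degree-theoretically controlled) selection of the Nehari crossing $s=s(y)$ for a \emph{general} $f\in\mathcal{P}$, for which $s\mapsto f(s,y)$ need not be a half-line and the crossing need not be unique. The clean way around this is not to select a single $s(y)$, but to compute a Brouwer degree of $z\mapsto(\Gamma(f(z)),F(f(z)))$ relative to $(0,a)$ on the sub-region $\{\Gamma\ge0\}$ of $K$, invoking homotopy invariance together with the sign information on the faces of $K$ supplied by $f^{*}$. A second delicate point, already latent in Lemma~\ref{lm7}, is the degeneration of the test functions on $\partial B(a,r_{0})$, where $v^{\sigma}_{t,k_{0}}$ concentrates and tends weakly to $0$: the barycenter $F$ nevertheless extends continuously up to the boundary with values $a+r_{0}\sigma$, and it is precisely this continuous extension, not $F$ at the degenerate endpoint, that must enter the degree computation. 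Once these two issues are settled, Lemma~\ref{lmhy6} provides the geometric obstruction that makes $a\in\Phi(\bar{B}(a,r_{0}))$ impossible under the contradiction hypothesis, and the lemma follows.
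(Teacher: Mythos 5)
Your core mechanism is the paper's: argue by contradiction, use a Brouwer degree on $K$ fed by the boundary information of Lemma~\ref{lm7} to force a point $z^{*}$ with $\Gamma(f(z^{*}))=0$ and $F(f(z^{*}))=a$, and then let Lemma~\ref{lmhy6} (which puts the barycenter in $V$, while $a\notin V$) produce the contradiction. The difference is the bookkeeping. You propose the joint map $\Psi=(\Gamma\circ f,\,F\circ f)$ with target $(0,a)$; since $\Psi$ agrees on $\partial K$ with the explicit map built from $f^{*}$, and the checks that $(0,a)$ is not attained on $\partial K$ are exactly the sign facts the paper records in (\ref{equation29})--(\ref{equation31}) together with $|F(w^{\sigma}_{t_{0},k_{0}})-(a+r_{0}\sigma)|\le\eta<r_{0}$ from Lemma~\ref{lm7}~b), the degree $\deg(\Psi,K,(0,a))$ is determined by boundary data and can be computed by a boundary homotopy onto the model map $(s,\xi)\mapsto(\lambda^{-1}-s,\xi)$, giving $\pm1$; an interior solution of $\Psi=(0,a)$ follows, and Lemma~\ref{lmhy6} kills it. The paper instead takes $G=(s,F\circ f)$ with target $(\lambda^{-1},a)$, gets $\deg(G,K,(\lambda^{-1},a))=1$ by the linear homotopy to the identity, splits $K$ into $K^{\pm}$ (sign of $\Gamma(f)$, with the face $s=0$ adjoined to $K^{+}$) and $K^{0}$, uses Lemma~\ref{lmhy6} to exclude the target from $G(K^{0})$, and kills the degrees over $K^{\pm}$ by sliding the target to $(\pm R,a)$; additivity then gives $1=0$. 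Done on all of $K$, your single degree computation is sound and somewhat more economical than the paper's three-degrees-plus-additivity scheme, while consuming the same inputs.

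The genuine gap is in the variant you finally lean on: computing the degree of $(\Gamma\circ f,F\circ f)$ \emph{on the sub-region} $\{\Gamma\ge0\}$ ``from the sign information on the faces of $K$ supplied by $f^{*}$'' is not possible. The relevant boundary of that region is not contained in $\partial K$: it contains the level set $\{\Gamma(f(\cdot))=0\}$, an unknown set depending on the arbitrary competitor $f$ and not on its boundary values; worse, your map sends this whole set into the hyperplane $\{0\}\times\R^{n}$ that contains the target $(0,a)$, so no boundary homotopy to a model map is available there, and in fact under the contradiction hypothesis that sub-region degree simply equals $0$ (the value is nowhere attained) --- there is nothing to contradict. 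This is precisely the difficulty the paper's choice of first component is engineered to avoid: in $G=(s,F\circ f)$ the first coordinate is the parameter $s$ itself, independent of $f$ and a priori confined to $[0,1]$, which is why the degrees over the $f$-dependent pieces $K^{\pm}$ \emph{can} be evaluated (they vanish upon sliding the target to $(\pm R,a)$, Lemma~\ref{lmhy6} guaranteeing the sliding path never meets $G(K^{0})$). So implement your fix on all of $K$, where the boundary is $\partial K$ and $f=f^{*}$ there; then your argument closes. Two smaller points: for interior $y$ one only gets $\Gamma(f(s,y))\ge0$ for small $s$ (with equality wherever $f$ vanishes), not $>0$ --- harmless, but it is why the paper adjoins the face $s=0$ to $K^{+}$ by hand; and the implication $F(f(z^{*}))=a\Rightarrow f(z^{*})\neq0$ needs $a\neq0$, an implicit normalization your argument shares with the paper's treatment of $K^{0}$.
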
~\\
We postpone the proof of Lemma~{\ref{lm8}} and we complete the proof
of Theorem \ref{thhy3}. From (\ref{eql27}) we have
$$
\max_{r\geq0}E(r v^{\sigma}_{t,k_{0}})=E(w^{\sigma}_{t,k_{0}})\le
\frac{2}{n}(p_{_{0}}S)^{\frac{n}{2}}-\eta\quad\forall
\sigma\in\Sigma,\quad \forall t\in [0,1[.
$$
From assertion b) of Lemma~{\ref{lm7}} there exists $\mu>0$, we fix
$t_{0}\in]\mu,1[$ such that
$$
\max_{r\geq0}E(r
v^{\sigma}_{t_{0},k_{0}})=E(w^{\sigma}_{t_{0},k_{0}})\le
\frac{1}{n}(p_{_{0}}S)^{\frac{n}{2}}+\eta ,\quad \forall
\sigma\in\Sigma .
$$
then
$$
\max_{\partial
K}E(f^{*})\le\frac{1}{n}(p_{_{0}}S)^{\frac{n}{2}}+\eta\quad\mbox{
and}\quad \sup_{K}E(f^{*})<\frac{2}{n}(p_{_{0}}S)^{\frac{n}{2}}.
$$
So, by Lemma~\ref{lm8},
$$
\sup_{K}E(f)\ge\frac{1}{n}(p_{_{0}}S)^{\frac{n}{2}}+2\eta>\frac{1}{n}(p_{_{0}}S)^{\frac{n}{2}}+\eta\ge\sup_{\partial
K}E(f^{*})
$$
and
$$
c=\inf_{f\in\mathcal{P}}\sup_{t\in
K}E(f)\in]\frac{1}{n}(p_{_{0}}S)^{\frac{n}{2}},\frac{2}{n}(p_{_{0}}S)^{\frac{n}{2}}[.
$$
Applying Theorem A 1 and Theorem A 2, we obtain the conclusion of
Theorem~\ref{thhy3}.\\~\\\deml{\bf{~\ref{lm8}}}. We argue by
contradiction. Suppose that there exists $f\in
C(K,H^{1}_{0}(\Omega_{\varepsilon}))$ with $f=f^{*}$ on $\partial
K$, and $E(f(s,\xi))\le
\frac{1}{n}(p_{_{0}}S)^{\frac{n}{2}}+2\eta ,\, \forall (s,\xi)\in K$.\\
We consider the function $G:K\longrightarrow \R^{n+1}$, defined by
\basn G(s,\xi)=(s,F(f(s,\xi))). \easn We will prove that \ba
\deg(G,K,(\lambda^{-1},a))=1 \label{equation28} .\ea The map
$H:[0,1]\times K\longrightarrow \R^{n+1}$, defined
by\\
$H(t,s,\xi)=tG(s,\xi)+(1-t)(s,\xi)=(s,t F(f(s,\xi))+(1-t)\xi)$\\
is a homotopy between $G$ and $Id_{K}$, where $Id_{K}$ is the
Identity
application of $K$.\\
To get~(\ref{equation28}), we start by checking that $(\lambda^{-1},a)\not\in H(t,\partial K)$.\\
If not, there exists $(s,\xi)\in\partial K$ such that
$H(t,s,\xi)=(\lambda^{-1},a)$, as a consequence $s=\lambda^{-1}$
and\,
$a=tF(f(\lambda^{-1},\xi))+(1-t)\xi=t(F(w^{\sigma}_{t_{0},k_{0}})-\xi)+\xi$.\\
Since $s=\lambda^{-1}\in]0,1[$, we have $\xi\in\partial\bar{B}(a,r_{0})$. But, since $|F(w_{t_{0},k_{0}}^{\sigma})-(a+r_{0}\sigma)|<\eta$ $\forall \sigma\in\Sigma$ (see Lemma \ref{lm7}), the fact that $t(F(w^{\sigma}_{t_{0},k_{0}})-\xi)+\xi=a$, $\xi\in \partial\bar{B}(a,r_{0})$ leads to a contradiction. Then, we deduce that $(\lambda^{-1},a)\not\in H(t,\partial K)$ and consequently $\forall t\in[0,1]$, $\deg(H(t,.),K,(\lambda^{-1},a))$ is well defined.\\
We consider the following sets:\\
$K^{+}=\{(s,\xi)\in K \,\,\vert\,\, \Gamma(f(s,\xi))>0\}\cup
(0,\xi)$,
$K^{-}=\{(s,\xi)\in K \,\,\vert\,\, \Gamma(f(s,\xi))<0\}$ and $K^{0}=\{(s,\xi)\in K\,\, \vert\,\, \Gamma(f(s,\xi))=0\}$.\\
If $(s,\xi)\in \partial K$ then we have
$f(s,\xi)=f^{*}(s,\xi)=\lambda s w^{\sigma}_{t_{0},k_{0}}$ and\basn
\Gamma(f(s,\xi))&=&(s\lambda)^{2}\int_{\Omega_{\varepsilon}}
p(x)|\nabla
w^{\sigma}_{t_{0},k_{0}}(x)|^{2}dx-(s\lambda)^{q}\int_{\Omega_{\varepsilon}}|
w^{\sigma}_{t_{0},k_{0}}(x)|^{q}dx\\[\medskipamount]
\Gamma(f(s,\xi))&=&[(s\lambda)^{2}-(s\lambda)^{q}]\int_{\Omega_{\varepsilon}}
p(x)|\nabla w^{\sigma}_{t_{0},k_{0}}(x)|^{2}dx. \easn Since
$\int_{\Omega_{\varepsilon}} p(x)|\nabla
w^{\sigma}_{t_{0},k_{0}}(x)|^{2}dx>0$, we see that \ba
\mbox{If}\quad (s,\xi)\in\partial K\quad\mbox{and if}\quad 0\leq s<
\lambda^{-1},\quad\mbox{then}\quad(s,\xi)\in K^{+}
\label{equation29} \ea \ba \mbox{If}\quad(s,\xi)\in\partial
K\quad\mbox{and if}\quad\lambda^{-1}< s\le
1,\quad\mbox{then}\quad(s,\xi)\in K^{-} \label{equation30} \ea \ba
\quad(\lambda^{-1},\xi)\in K^{0},\quad\forall \xi \in \partial
\bar{B}(a,r_{0}). \label{equation31} \ea Let $(s,\xi)\in K^{0}$, we
have $\Gamma(f(s,\xi))=0.$ Moreover, since $E(f(s,\xi))\le
\frac{1}{n}(p_{_{0}}S)^{\frac{n}{2}}+2\eta$, looking at
Lemma~\ref{lmhy6}, we deduce that
$$
F(f(s,\xi))\in V.
$$
Consequently $\forall (s,\xi)\in K^{0}$, $F(f(s,\xi))\not=a$ since
$a\not\in V.$\\
Hence $(\lambda^{-1},a)\not\in G(K^{0})=G(K\setminus (K^{+}\cup
K^{-}))$, then \ba
\deg(G,K^{+},(\lambda^{-1},a))+\deg(G,K^{-},(\lambda^{-1},a))=\deg(G,K,(\lambda^{-1},a)).
\label{equation32} \ea On the other hand, since
$(\lambda^{-1},a)\not\in H(t,\partial K)$ $\forall t\in [0,1]$ we
have
$$\deg(H(1,.),K,(\lambda^{-1},a))=\deg(H(0,.),K,(\lambda^{-1},a)).$$
Using the fact that $H(0,.)= G$, $H(1,.)=Id_{K}$ and
$\deg(Id_{K},K,(\lambda^{-1},a))=1$, we deduce
(\ref{equation28}).\\Now, we will prove that \ba
\deg(G,K^{+},(\lambda^{-1},a))=0 \label{equation34} \ea \ba
\deg(G,K^{-},(\lambda^{-1},a))=0. \label{equation35} \ea Fix
$R>\lambda^{-1}$ and let $y\in \R^{n+1}$ such that $|y|\ge R$ then
$y\not\in G(K)$.\\We define the path $r(t)=(t
R+(1-t)\lambda^{-1},a)$, for
$t\in [0,1]$.\\
We claim that $r(t)\not\in G(\partial K^{+})$ $\forall\, t\in [0,1]$.\\
If not, there exists $(s,\xi)\in \partial K^{+}$ with\,
$(Rt+(1-t)\lambda^{-1},a)=(s,F(f(s,\xi)))$. Hence
$s=tR+(1-t)\lambda^{-1}\ge\lambda^{-1}$ and $a=F(f(s,\xi))$. But
$\forall (s,\xi)\in K^{0}$, we have $F(f(s,\xi))\not=a$, then $
(s,\xi)\not\in K^{0}$. Hence $(s,\xi)\in\partial K\cap K^{+}$,
(\ref{equation29}) implies that $s<\lambda^{-1}$ and this
contradicts the fact that $s\ge \lambda^{-1}$. Thus $r(t)\not\in
G(\partial K^{+})$ $\forall\, t\in[0,1]$. Hence $\deg(G,K^{+},r(t))$
is well defined and is independent of $t$.\\Since $(R,a)\not\in
G(K)$ we obtain
$$
\deg(G,K^{+},(R,a))=0.
$$
Using the fact that $$\deg(G,K^{+},r(t))=\deg(G,K^{+},(R,a))\quad
\forall\,t\in [0,1],$$we deduce (\ref{equation34}).\\Similarly, we
prove~(\ref{equation35}) by using the path
$q(t)=(-tR+(1-t)\lambda^{-1},a)$, $t\in [0,1]$. We have that
$\deg(G,K^{-},q(t))$ is independent of $t$. Using the fact that
$(-R,a)\not\in G(K)$, we conclude that
$$
\deg(G,K^{-},(\lambda^{-1},a))=\deg(G,K^{-},(-R,a))=0.
$$
From (\ref{equation28}), (\ref{equation32}),~(\ref{equation34})
and~(\ref{equation35}) we obtain a contradiction, and
Lemma~\ref{lm8} is proved.

\end{document}